\documentclass[10pt, reqno, final]{amsart}

\usepackage[T1]{fontenc}
\usepackage[utf8]{inputenc} 
\usepackage{baskervald} 

\usepackage{microtype} 

\makeatletter
\def\MT@register@subst@font{\MT@exp@one@n\MT@in@clist\font@name\MT@font@list 
  \ifMT@inlist@\else\xdef\MT@font@list{\MT@font@list\font@name,}\fi}
\makeatother

\usepackage[letterpaper, showframe]{geometry}
\geometry{reset} 
\usepackage[numbers, sort&compress]{natbib} 
\newtheorem{theorem}{Theorem}[section]
\newtheorem{corollary}{Corollary}

\newtheorem{lemma}[theorem]{Lemma}

\theoremstyle{definition}
\newtheorem{definition}[theorem]{Definition}

\def\l{\ell}
\DeclareMathOperator{\Ind}{\mathbf{1}}
\def\ZZ{\mathbb{Z}}
\DeclareMathOperator*{\esssup}{ess~sup}
\newcommand{\D}[2]{\frac{\partial{} #1}{\partial{} #2}}
\newcommand\lnorm[1]{\left\vert{} #1 \right\vert}
\newcommand\norm[1]{\left\Vert{} #1 \right\Vert}
\newcommand\bk[1]{\left\{ #1 \right\}}
\newcommand\subPlus[1]{\left(#1\right)_{+}}
\newcommand{\controlVarsSup}[1]{s^{#1}, g^{#1}, f^{#1}}
\newcommand{\controlVarsSub}[1]{s_{#1}, g_{#1}, f_{#1}}

\def\J{\mathcal{J}}
\title[On the State Constrained Optimal Control\ldots] 
      {On the State Constrained Optimal Control of the Stefan Type Free Boundary Problems}

\author{Ugur G. Abdulla, Evan Cosgrove, Curtis Earl, and Jonathan Goldfarb}

\subjclass{Primary: 35R30, 35R35, 35K20}
 \keywords{inverse Stefan problem, optimal control of parabolic PDE, parabolic free boundary problem, Frechet differentiability, Besov spaces, embedding theorems, trace embeddings.}

 \email{abdulla@fit.edu}
 \email{jgoldfar@fit.edu}
 \email{ecosgrove2011@my.fit.edu}
 \email{cearl2013@my.fit.edu}

\thanks{This research was supported by NSF grant \#1359074}

\thanks{$^*$ Corresponding author: Ugur G. Abdulla}

\begin{document}
\begin{abstract}
We analyze the state constrained inverse Stefan type parabolic free boundary problem as an optimal control problem in the Sobolev-Besov spaces framework.
Boundary heat flux, density of heat sources, and free boundary are components of the control vector.
Cost functional is the sum of the $L_2$-norm declinations of the temperature measurement at the final moment, the phase transition temperature, the final position of the free boundary, and the penalty term, taking into account the state constraint on the temperature.
We prove the existence of optimal control, Frechet differentiability, and optimality condition in the Besov spaces under minimal regularity assumptions on the data.
We pursue space-time discretization through finite differences and prove that the sequence of discrete optimal control problems converges to the original problem both with respect to functional and control.
\end{abstract}

\maketitle


\section{Description of Main Results}
\subsection{Introduction and Motivation}
Consider one-phase Inverse Stefan Problem (ISP) for the second order parabolic PDE:\@
\begin{gather}
Lu := {(a u_x)}_x +b u_x +c u-u_t=f,~ \text{in}~\Omega\label{eq:intro-pde}
\\
u(x,0)= \phi (x),~ 0 \leq x \leq s(0)=s_0\label{eq:intro-initial}
\\
a(0,t) u_x (0,t) = g(t),~ 0 < t < T\label{eq:intro-flux-left}
\\
a(s(t),t) u_x (s(t),t)+\gamma (s(t),t) s'(t) = \chi (s(t),t),~ 0 < t < T\label{eq:intro-stefan-cond}
\\
u(s(t),t)= \mu(t),~ 0 < t < T\label{eq:intro-phase-transition-temp}
\\
u(x,T)=w(x),~ 0 < x < s(T)=:s_{*}\label{eq:intro-measurement-final}
\end{gather}
where $a$, $b$, $c$, $\phi$, $\gamma$, $\chi$, and $w$ are known functions, and
\begin{gather}
  a(x,t) \geq a_0 > 0,\quad
  s_0 > 0\label{eq:a0}
  \\
  \Omega = \bk{(x,t): 0<x<s(t),~0 < t \leq T}.\nonumber
\end{gather}

Solving this problem involves finding both the temperature distribution, $u(x,t)$, the free boundary, $s(t)$, the density of heat sources, $f(x,t)$, and the heat flux on the fixed boundary, $g(t)$.
The functions $a$, $b$, and $c$ are the diffusion, convection, and reaction coefficients in the interior of $\Omega$, respectfully.
The function $\phi$ represents the initial temperature of the system,
and the temperature along the free boundary is denoted by the function $\mu(t)$.
The Stefan condition~\eqref{eq:intro-stefan-cond} ensures conservation of energy across the free boundary $x=s(t)$; that is, the free boundary may only move through the release or absorption of latent heat.
The function $\gamma$ is the coefficient of latent heat of phase change, and $\chi$ represents any additional boundary heat sources.
The function $w(x)$ is the measured temperature at the final moment $t=T$, and $s_{*}$ is the measured location of the free boundary at $t=T$.

Finally, suppose that the temperature is required to remain below a given constant $u_*$.
Then $u$ must also satisfy the constraint
\begin{equation}
    u(x,t) \leq u_{*},~(x,t) \in \Omega.\label{eq:intro-state-constraint}
\end{equation}

Under these conditions, we are required to solve an \textbf{State Constrained Inverse Stefan Problem (SCISP)}: find functions $u(x,t)$ and $s(t)$, the boundary heat flux $g(t)$, and the density of sources $f(x,t)$ satisfying conditions~\eqref{eq:intro-pde}--\eqref{eq:intro-state-constraint}.

Motivation for this type of inverse problem arose from the modeling of bioengineering problems on the laser ablation of biological tissues through a Stefan problem~\eqref{eq:intro-pde}--\eqref{eq:intro-measurement-final}, where $s(t)$ is the ablation depth at the moment $t$.
Lab experiments allow the measurement of the final temperature distribution $w$ and ablation depth $s_{*}$, but unknown parameters of the model such as $g$ and $f$ are very difficult to measure through experiments and thus must be found along with the temperature $u$ and $s$.
In this context, the constraint~\eqref{eq:intro-state-constraint} corresponds to the requirement that no living tissue be raised above a certain damaging temperature $u_{*}$.
Our approach allows the regularization of error contained in final moment temperature measurement and final ablation depth $s_{*}$, as well as any error contained in the phase transition temperature measurement $\mu(t)$.

Research into inverse Stefan problems proceeded in two directions: inverse Stefan problems with given phase boundaries
in~\cite{bell81,budak72,budak73,budak74,cannon64,carasso82,ewing79,ewing79a,goldman97,hoffman81,sherman71}, or inverse problems with unknown phase boundaries in~\cite{baumeister80,fasano77,goldman97,hoffman82,hoffman86,jochum80a,jochum80,knabner83,lurye75,niezgodka79,nochetto87,primicerio82,sagues82,vasilev69,talenti82,yurii80}.

In~\cite{abdulla13,abdulla15}, a new variational formulation of ISP was
formulated and existence of a solution as well as convergence of the method of finite differences was proven.
Fr\'echet differentiability of ISP in the variational formulation was proven in~\cite{abdulla17,abdulla16}.
In this work, we extend the new variational formulation and Frechet differentiability results to the SCISP.\@

The structure of the remainder of the paper is as follows: in Section~\ref{sec:notation} we define all
the functional spaces.
Section~\ref{sec:optimal-control-formulation} formulates the optimal control
problem; the discrete optimal control problem is formulated in
Section~\ref{sec:disc-optimal-control-formulation}.
The main results are formulated in Section~\ref{sec:formulate-main-results}.
In Section~\ref{sec:prelims} preliminary results are proven.
The proofs of the main results are elaborated in Section~\ref{sec:proof-main-results}.
In Section~\ref{thm:existence-convergence} existence of the optimal control and convergence of the sequence of discrete optimal control problems to original problem is proved.
Frechet differentiability and the form of the Frechet differential are
established in Section~\ref{sec:frechet-differentiability}.
\subsection{Notation}\label{sec:notation}
\begin{itemize}
\item Define
  \(
    \subPlus{u} := \max(u;0)
  \).
  Let $U \subset \Re^n$ be a domain and define $Q_T = (0,1)\times (0,T)$.
  We also make use of the notion of weak differentiability and the spaces of Sobolev functions~\cite{besov79,ladyzhenskaya68,nikolskii75,solonnikov64,solonnikov65}:
  \item The Sobolev space $W_{p}^{\l}(0,T)$, for $\l = 1, 2, \ldots$ and $p>1$, is the Banach space of $L_{p}(0,T)$ functions whose weak derivatives up to order $\l$ exist and are in $L_{p}(0,T)$.
  The norm in $W_{p}^{\l}(0,T)$ is
  \[
    \norm{u}_{W_p^{\l}(0,T)}^p
    := \sum_{k=0}^{\l} \norm{\frac{d^k u}{d t^k}}_{L_p(0,T)}^p.
  \]
  \item For $\l > 0$, the Sobolev-Besov space $B_p^{\l}(U)$ is the Banach space of measurable functions with finite norm
  \[
    \norm{u}_{B_p^{\l}(U)}
    :=
    \norm{u}_{W_p^{[\l]}(U)}
    + [u]_{B_p^{\l}(U)}.
  \]
  If \(\l \not\in \ZZ_+\), then the seminorm is given by
  \[
    [u]_{B_p^\l(U)}^p
    :=\int_{U} \int_{U} \frac{
    \lnorm{
      \D{^{[\l]} u(x)}{x^{[\l]}}
      - \D{^{[\l]} u(y)}{x^{[\l]}}
    }^p
    }{
    \lnorm{x-y}^{1 + p(\l - [\l])}
    } \,dx \,dy,
  \]
  while if $\l \in \ZZ_+$, the seminorm $[u]_{B_p^{\l}(U)}$ is given by
  \[
    [u]_{B_p^\l(U)}^p
    :=
    \int_{U} \int_{U} \frac{\lnorm{
        \D{^{\l-1} u(x)}{x^{\l-1}}
        - 2 \D{^{\l-1} u\left(\frac{x+y}{2}\right)}{x^{\l-1}}
        + \D{^{\l-1} u(y)}{x^{\l-1}}
      }^p}{\lnorm{x-y}^{1+p}}
    \,dy \,dx.\nonumber
  \]
  By~\cite[\S 18, Thm.\ 9]{besov79a}, it follows that for $p=2$ and $\l \in
    \ZZ_+$, the $B_p^{\l}(U)$ norm is equivalent to the $W_p^{\l}(U)$ norm
  (i.e.\ the two spaces coincide.)
  In this work, we will use the notation $B_p^{\l}$ for Sobolev-Besov
  functions with any $\l>0$.

  \item Let $1 \leq p < \infty$, $0<\l_1,\l_2$.
  The Besov space $B_{p,x,t}^{\l_1, \l_2}(Q_T)$ is defined as the closure of the set of smooth functions under the norm
  \begin{gather*}
    \norm{u}_{B_{p,x,t}^{\l_1, \l_2}(Q_T)}
    = \left(\int_0^T \norm{u(x,t)}_{B_p^{\l_1}(0,1)}^p \,dt\right)^{1/p}
    \\
    + \left(\int_0^1 \norm{u(x,t)}_{B_p^{\l_2}(0,T)}^p \,dx\right)^{1/p}.
  \end{gather*}
  When $p=2$, if either $\l_1$ or $\l_2$ is an integer, the Besov seminorm may
  be replaced with the corresponding Sobolev seminorm due to equivalence of the
  norms.

  \item $V_{2}(\Omega)$ is the subspace of $B_{2}^{1,0}(\Omega)$ for which the norm
  \[
    \norm{u}_{V_{2}(\Omega)}^2
    = \esssup_{0\leq t \leq T} \norm{u(\cdot, t)}_{
      L_{2}\big(0,s(t)\big)}^{2}
    + \norm{\D{u}{x}}_{L_{2}(\Omega)}^{2}
  \]
  is finite
  \item $V_{2}^{1,0}(\Omega)$ is a Banach space with norm
  \[
    \norm{u}_{V_{2}^{1,0}(\Omega)}^2
    = \max_{0\leq t \leq T} \norm{u(\cdot, t)}_{L_{2}\big(0,s(t)\big)}^{2}
    + \norm{\D{u}{x}}_{L_{2}(\Omega)}^{2}.
  \]
$V_{2}^{1,0}(\Omega)$ is the completion of $B_{2}^{1,1}(\Omega)$ in the $V_{2}(\Omega)$ norm.
\end{itemize}

\begingroup
\def\scontrolspace{B_2^2}
\def\sdiscretecontrolspace{b_2^2}
\def\gcontrolspace{B_2^1}
\def\gdiscretecontrolspace{b_2^1}
\def\fcontrolspace{L_2}
\def\fdiscretecontrolspace{{\l}_{2}}

\def\controlVars{s, g, f}
\def\controlVarsWithN{\controlVarsSup{n}}
\def\controlVarsWithSupStar{\controlVarsSup{*}}
\def\controlVarsWithDelta{{\Delta{} s}, {\Delta{} g}, {\Delta{} f}}
\def\controlVarsWithdelta{{\delta{} s}, {\delta{} g}, {\delta{} f}}
\def\controlVarsWithTilde{\tilde{s}, \tilde{g}, \tilde{f}}
\def\controlVarsWithBar{\bar{s}, \bar{g}, \bar{f}}
\def\controlVarsWithNSub{\controlVarsSub{n}}
\def\controlVarsWithStar{\controlVarsSub{*}}
\def\controlVarsWithEpsilon{\controlVarsSub{\epsilon}}

\def\controlSpace{H}
\def\controlSpaceFull{\scontrolspace(0,T)\times{} \gcontrolspace(0,T)\times{}
  \fcontrolspace(D)}

\def\controlSpaceWeaklyConverge{\scontrolspace(0,T) \times{} \gcontrolspace(0,T) \times{} \fcontrolspace(D)}

\def\controlVarsStronglyConverge{s, g}
\def\controlVarsStronglyConvergeWithTilde{\tilde{s}, \tilde{g}}
\def\controlVarsStronglyConvergeWithSupStar{s^*, g^*}
\def\controlVarsStronglyConvergeWithN{s^n, g^n}
\def\controlSpaceStronglyConverge{B_2^1(0,T) \times{} L_2(0,T)}

\def\discreteControlVars{[s]_n, [g]_n, [f]_{nN}}
\def\discreteControlVarsList{$[s]_n$, $[g]_n$, $[f]_{nN}$}
\def\discreteControlSet{V_R^n}

\def\P{\mathcal{P}}
\def\Q{\mathcal{Q}}
\def\probIn{\mathcal{I}_n}
\def\fIn{I_n}
\subsection{Optimal Control Problem}\label{sec:optimal-control-formulation}
Fix a sequence of real numbers $A_k\uparrow \infty$.
For each $k=1,2,\ldots$, we wish to minimize the cost functional
\begin{gather}\label{eq:cost-func}
  \mathcal{J}(v)= J(v) + P_{k}(v) \to \inf,
  \\
  J(v) := \beta_0 \int_0^{s(T)}\left|u(x,T;v)-w(x)\right|^2\,dx
  + \beta_1 \int_0^T\left|u(s(t),t;v)-\mu(t)\right|^2\,dt\nonumber
  \\
  + \beta_2 \left|s(T)-s_*\right|^2,\qquad
P_{k}(v) := A_{k}\int_0^T\int_0^{s(t)} \subPlus{u(x,t;v)-u_{*}}^2\, dx \, dt\nonumber
\end{gather}
on the control set $V_R$ defined as
\begin{gather}\label{eq:control-set}
  V_R=\{ v=(\controlVars{}) \in \controlSpace:
  ~ 0< \delta \leq s(t),
  ~ s(0) = s_{0},
  ~ s'(0)=0,
  ~ \norm{v}_H \leq R \},
  \\
  \controlSpace := \controlSpaceFull,\nonumber
  \\
  \norm{v}_{\controlSpace} := \max(
  \norm{s}_{\scontrolspace(0,T)},
  \norm{g}_{\gcontrolspace(0,T)},
  \norm{f}_{\fcontrolspace(D)}
  )\nonumber
\end{gather}
where $D$ is defined by
\[
  D := \bk{(x,t) : 0\leq x\leq \l,~ 0\leq t\leq T},
\]
and $\l=\l(R)>0$ is chosen such that for any control $v\in V_R$, its component
$s$ satisfies $s(t)\leq \l$.
Existence of appropriate $\l$ follows from Morrey's
inequality~\cite{besov79,ladyzhenskaya68}.

For a given control \(v\in \controlSpace{}\), the state vector $u=u(x,t;v)$ solves~\eqref{eq:intro-pde}--\eqref{eq:intro-stefan-cond}.

The cost functional effectively deals with possible measurement error, as it absorbs conditions~\eqref{eq:intro-phase-transition-temp} and~\eqref{eq:intro-measurement-final}.
Furthermore, note that the final term of the cost functional, the penalty term, absorbs~\eqref{eq:intro-state-constraint} into the cost functional.
This term ensures the cost functional will be large if condition~\eqref{eq:intro-state-constraint} is not satisfied.
We seek to prove the existence of a solution to the Optimal Control Problem, as well as convergence of finite difference approximations and differentiability of the new variational formulation.
The energy estimates derived in~\cite{abdulla15} will be a key tool in the proof of the existence and convergence results.

Given control vector \(v\) solution $u(x,t;v)$ of the problem~\eqref{eq:intro-pde}--\eqref{eq:intro-stefan-cond} will be understood in the following sense.

\begin{definition}\label{w211-soln-defn}
The function $u \in B_{2}^{1,1}(\Omega)$ is called a weak solution of~\eqref{eq:intro-pde}--\eqref{eq:intro-stefan-cond} if $u(x,0)=\phi(x) \in B_2^1(0,s_0)$ and
\begin{gather}
0=\int_{0}^{T}\int_{0}^{s(t)}[ a u_{x}\Psi_{x}-bu_{x}\Psi - c u \Psi + u_{t} \Psi+f\Psi] \,dx\,dt \nonumber
\\
 +\int_{0}^{T}[ \gamma(s(t),t)s'(t)-\chi(s(t),t)]\Psi(s(t),t)\, dt
+\int_{0}^{T}g(t)\Psi(0,t)\, dt\label{eq:w211-soln-defn}
\end{gather}
for arbitrary $\Psi \in B_{2}^{1,1}(\Omega)$.
\end{definition}
We also need the notion of the weak solution in $V_2(\Omega)$.
\begin{definition}\label{v2-soln-defn}
We say $u \in V_{2}(\Omega)$ is a weak solution of~\eqref{eq:intro-pde}--\eqref{eq:intro-stefan-cond} if the following integral identity is satisfied
\begin{gather}
\int_0^T \int_0^{s(t)}[-au_x\Psi_x+bu_x\Psi+cu\Psi+u\Psi_t-f\Psi]\,dx\,dt \nonumber
\\
+ \int_0^T [-\gamma(s(t),t)s'(t)+\chi(s(t),t)+u(s(t),t)s'(t)]\Psi(s(t),t)\,dt \nonumber
\\
- \int_0^T g(t)\Psi(0,t)\,dt + \int_0^{s(0)}\phi(x)\Psi(x,0)\,dx = 0 \label{eq:v2-soln-defn}
\end{gather}
for arbitrary $\Psi \in B_2^{1,1}(\Omega)$ with $\Psi |_{t=T}=0$.
\end{definition}

If $u$ is a weak solution in $V_2^{1,0}(\Omega)$ or $B_2^{1,1}(\Omega)$, the traces $u|_{x=0}, u|_{x=s(t)}$ and $u|_{t=T}$ exists in $L_2(0,T)$ and $L_2(0,s(T))$ respectively, if $s\in B_2^2(0,T)$ (\cite{ladyzhenskaya68}), and therefore functional $\mathcal{J}(v)$ is well defined for $v\in V_R$.

\subsection{Discrete Optimal Control Problem}\label{sec:disc-optimal-control-formulation}
Let
\[\omega_{\tau}=\bk{ t_{k}=k  \tau,~k=0,1,\ldots,n} \]
be a grid on $[0,T]$ with $\tau=\frac{T}{n}$.
Let us now introduce a spatial grid.
Given a vector $[s]_n \in \Re^{n+1}$, let $(p_0,p_1,\ldots,p_n)$ be a permutation of $(0,1,\ldots,n)$ such that
\[
  s_{p_0}\leq s_{p_1}\leq \cdots \leq s_{p_n}.
\]
In particular, according to this permutation for arbitrary $k$ there exists a unique $j_k$ such that
\[
  s_k=s_{p_{j_k}}.
\]
Furthermore, unless it is necessary in context, we will write $j$ instead of the subscript $j_k$.
Let
\[
  \omega_{p_0}=\bk{
    x_{i}:
    x_i=i h,
    ~i=0,1,\ldots,m_0^{(n)}
  }
\]
be a grid on $[0,s_{p_0}]$ and $h=\frac{s_{p_0}}{m_0^{(n)}}$.
Furthermore, we will assume that
\begin{equation}
  h = O(\sqrt{\tau}) \quad \text{as}~ \tau \rightarrow 0.\label{eq:htau}
\end{equation}
We continue construction of the spatial grid by induction.
Having constructed $\omega_{p_{k-1}}$ on $[0,s_{p_{k-1}}]$, we construct
\[ \omega_{p_k}=\{ x_i:~i=0,1,\ldots, m_k^{(n)} \} \]
on $[0,s_{p_{k}}]$, where $m_k^{(n)}\geq m_{k-1}^{(n)}$, and this inequality is strict if and only if $s_{p_{k}}>s_{p_{k-1}}$; for $i\leq m_{k-1}^{(n)}$ the points $x_i$ are the same as in the grid $\omega_{p_{k-1}}$.
Finally, if $s_{p_{n}}<\l$, then we introduce a grid on $[s_{p_n},l]$,
\[
  \overline{\omega}=\{x_i: x_i=s_{p_n}+(i-m_n^{(n)}) \overline{h},
  ~i=m_n^{(n)},\ldots, N \}
\]
of stepsize order $h$; i.e., $\overline{h}=O(h)$ as $h \rightarrow 0$.
Furthermore we simplify the notation and write $m_k^{(n)}\equiv m_k$.
Let
\[ h_i=x_{i+1}-x_i, ~i=0,1,\ldots,N-1; \quad \Delta := \max_{i=0,1,\ldots,N-1} h_i\]
and assume that
\(
m_k \rightarrow +\infty
\)
as
\(
n\rightarrow \infty
\).
Define the discrete control set
\begin{gather}
  \discreteControlSet{} = \Big{\{}
  [v]_n = (\discreteControlVars{}) \in \bar{H}:~
  0< \delta \leq s_k;~
  \norm{[v]_n}_{\bar{H}} \leq R
  \Big{\}}\label{eq:discrete-control-set}
\end{gather}
where
\begin{gather*}
  \bar{H}:=\Re^{n + 1} \times \Re^{n + 1} \times \Re^{nN}
  \\
   [s]_n=(s_i, i=\overline{0,n}; s_1=s_0),
   ~[g]_n=(g_i, i=\overline{0,n}),
   \\
   [f]_{nN}=(f_{ik},~ i=\overline{0,N-1},~ k=\overline{1,n})
  \\
  \norm{[v]_n}_{\bar{H}} := \max \left(
  \norm{[s]_{n}}_{\sdiscretecontrolspace};
  \norm{[g]_{n}}_{\gdiscretecontrolspace};
  \norm{[f]_{nN}}_{\fdiscretecontrolspace}
  \right),
  \\
  \norm{[g]_{n}}_{\gdiscretecontrolspace}^{2}
  = \sum_{k=0}^{n-1}\tau g_{k}^{2}
  + \sum_{k=1}^{n}\tau g_{k,\bar{t}}^{2},
  \quad
  \norm{[s]_{n}}_{\sdiscretecontrolspace}^{2}
  = \norm{[s]_{n}}_{\gdiscretecontrolspace}^2
  + \sum_{k=1}^{n-1}\tau s_{k,\bar{t}t}^{2},
  \\
  \norm{[f]_{nN}}_{\fdiscretecontrolspace{}}^2
  = \sum_{k=1}^n \sum_{i=0}^{N-1} \tau h_i f_{ik}^2,
\end{gather*}
and the standard notation for the finite differences is used,
\begin{gather*}
s_{k,\bar{t}} = \frac{s_k-s_{k-1}}{\tau}, \quad
s_{k,\bar{t}t}=\frac{s_{k+1}-2s_k+s_{k-1}}{\tau^2}.
\end{gather*}

Introduce the two mappings $\Q_n$ and $\P_n$ between the continuous and discrete control sets $V_R$ and $\discreteControlSet{}$, where

\[
  \Q_{n}(v)=[v]_{n}=(\discreteControlVars{}) \quad \text{for}~ v\in V_R,
\]
where
\begin{gather*}
  s_k=s(t_k),~k=\overline{2,n}, \quad g_k=g(t_k),~k=\overline{0,n},
  \\
  f_{ik}=\frac{1}{\tau h_i} \int_{t_{k-1}}^{t_k}\int_{x_i}^{x_{i+1}}f(x,t) \,dx\,dt,~i=\overline{0,N-1},~k=\overline{1,n},
\end{gather*}
and
\[
  \P_{n}([v]_{n})=v^{n}=(\controlVarsWithN{})\in \controlSpaceFull{} \quad \text{for}~ [v]_{n} \in \discreteControlSet{},
\]
where
\begin{gather*}
s^n(t)=
\begin{cases}
s_0&~ 0\leq t \leq \tau,\\
s_{k-1}+(t-t_{k-1}-\frac{\tau}{2})s_{k-1,\bar{t}}+\frac{1}{2}(t-t_{k-1})^2 s_{k-1,\bar{t}t}& t_{k-1}\leq t \leq t_k,
\end{cases}
\\
g^n(t)=g_{k-1} + g_{k,\bar{t}}(t-t_{k-1}),\quad t_{k-1} \leq t \leq t_k, k=\overline{1,n},
\intertext{and}
f^{n}(x,t) = f_{ik},\quad x_i\leq x<x_{i+1},\quad t_{k-1}<t\leq t_{k},\quad i =\overline {0,N-1}, j = \overline{1,n}.
\end{gather*}
Introduce the following notation for Steklov averages:
\begin{gather*}
r_{k}=\frac{1}{\tau}\int_{t_{k-1}}^{t_{k}}r(t)\,dt,\quad
d_{ik}=\frac{1}{h_i \tau} \int_{x_i}^{x_{i+1}}\int_{t_{k-1}}^{t_k}
d(x,t)\,dt\,dx,
\\
w_i=\frac{1}{h_i}\int_{x_i}^{x_{i+1}}w(x)\,dx
\end{gather*}
where \(i=\overline{0,N-1}\), \(k=\overline{1,n}\), \(d \in \bk{a,b,c,f,f^{n}}\),
\(r \in \bk{\mu,g,g^n}\).
Given $v=(\controlVars{}) \in V_R$, we define the Steklov averages of traces by
\begin{equation}
\chi^{k}_s=\frac{1}{\tau} \int_{t_{k-1}}^{t_{k}}\chi(s(t),t) \,dt, \
(\gamma_s s')^k=\frac{1}{\tau} \int_{t_{k-1}}^{t_{k}}\gamma(s(t),t)s'(t) \,dt,\quad k=\overline{1,n}.\label{eq:stek-avg-trace}
\end{equation}
Given $[v]_{n}=(\discreteControlVars{}) \in \discreteControlSet{}$ we define Steklov averages $\chi^{k}_{s^n}$ and $(\gamma_{s^n} (s^n)')^k$ through~\eqref{eq:stek-avg-trace} with $s$ replaced by $s^n$.

We now introduce the notion of a solution in the discrete sense through a discretization of the integral identity~\eqref{eq:w211-soln-defn}.

\begin{definition}\label{discretestatevector}
Given a discrete control vector $[v]_{n}$, the vector function
\[ \big[u([v]_{n})\big]_{n}=\big(u(0),u(1),\ldots,u(n)\big), ~u(k) = \big(u_0(k), \ldots, u_N(k)\big)\in \Re^{N+1}, \]
\(k=0,\ldots,n\) is called a discrete state vector if
\begin{description}
\item[a] The first $m_0+1$ components of the vector $u(0)\in \Re^{N+1}$ satisfy
\[ u_i(0)=\phi_i := \phi(x_i), ~i=0,1,\ldots,m_0; \]
\item[b] For arbitrary $k=1,\ldots,n$ first $m_j+1$ components of the vector $u(k)\in \Re^{N+1}$ solve the following system of $m_j+1$ linear algebraic equations:
\begin{gather}
\Big [ a_{0k}+h b_{0k}-h^2c_{0k}+\frac{h^2}{\tau} \Big] u_0(k) - \Big [
a_{0k}+hb_{0k} \Big] u_1(k)=\frac{h^2}{\tau}u_0(k-1)\nonumber
\\
-h^2f_{0k} - hg^n_{k},\nonumber
\\
-a_{i-1,k}h_i u_{i-1}(k)+\Big [ a_{i-1,k}h_i+a_{ik}h_{i-1}+b_{ik}h_i h_{i-1}-c_{ik}h_i^2h_{i-1}+\frac{h_i^2h_{i-1}}{\tau} \Big] u_i(k) \nonumber
\\
-\Big [ a_{ik}h_{i-1}+b_{ik}h_i h_{i-1} \Big] u_{i+1}(k) = -h_i^2h_{i-1}f_{ik}+\frac{h_i^2h_{i-1}}{\tau}u_i(k-1), \nonumber
\intertext{for \(i=1,\ldots,m_j-1\), and}
-a_{m_j-1,k} u_{m_j-1}(k)+a_{m_j-1,k} u_{m_j}(k)=-h_{m_j-1} \Big [ (\gamma_{s^n} (s^n)')^k-\chi^{k}_{s^n} \Big];\label{eq:disc-lin-eq}
\end{gather}
\item[c] For arbitrary $k=0,1,\ldots,n$, the remaining components of $u(k)\in \Re^{N+1}$ are calculated as
\[ u_i(k)= \hat{u}(x_i;k), ~m_j\leq i \leq N \]
where $\hat{u}(x;k) \in B_2^1(0,\l)$ is a piecewise linear interpolation of $\bk{u_i(k)}$, that is to say
\[ \hat{u}(x;k)=u_i(k)+\frac{u_{i+1}(k)-u_i(k)}{h_i} (x-x_i),~x_i\leq x\leq x_{i+1},~ i=0,\ldots,m_j-1, \]
iteratively continued for $x \geq s_k$ as
\begin{equation}
\hat{u}(x;k)=\hat{u}(2^n s_k-x;k), ~2^{n-1}s_k \leq x \leq 2^n s_k,~ n=1,2,\ldots.\label{eq:disc-st-ext}
\end{equation}
\end{description}
\end{definition}
Note that no more than
\(
  n^*=1+\log_2[ \l/\delta]
\)
reflections are required to cover $[0,\l]$.
The system~\eqref{eq:disc-lin-eq} is obtained after discretizing~\eqref{eq:w211-soln-defn} and performing summation by parts on the resulting summation identity
\begin{gather}
\sum_{i=0}^{m_j-1}h_i \Big [ a_{ik}u_{ix}(k)\eta_{ix}-b_{ik}u_{ix}(k)\eta_i-c_{ik}u_i(k)\eta_i+
f_{ik}^{n}\eta_i+u_{i\bar{t}}(k)\eta_i \Big] \nonumber
\\
+ \Big [ (\gamma_{s^n} (s^n)')^k-\chi^{k}_{s^n} \Big]\eta_{m_j}+g^n_k \eta_0=0,\label{eq:disc-int-id}
\end{gather}
where $\eta_i$, $i=0,1,\ldots,m_j$ are arbitrary numbers.

With this definition, we may now consider the discrete version of our optimal control problem, where we wish to minimize the cost functional
\begin{gather}\label{eq:disc-cost-func}
  \probIn([v]_n)= I_n([v]_n) + P_{k}^n([v]_n),
  \\
  I_n([v]_n) := \beta_0\sum_{i=0}^{m_{j_n}-1}h_i|u_i(n)-w_i|^2+\beta_1\sum_{k=1}^n \tau|u_{m_{j_k}}(k)-\mu_k|^2+\beta_2|s_n-s_*|^2,\nonumber
\\
P_{k}^n([v]_n) := A_k\sum_{k=1}^n\sum_{i=0}^{m_{j_k}-1}\tau h_i \subPlus{u_i(k)-u_*}^2,\nonumber
\end{gather}
on the control set $\discreteControlSet{}$, subject to the discrete state vector.
We will call this Problem $\probIn{}$.

Finally, we define piecewise constant and piecewise linear interpolations of the discrete state vector, which we will use later.
Given discrete state vector $[u([v]_{n})]_{n}=(u(0),u(1),\ldots,u(n))$, let
\begin{gather*}
  u^\tau(x,t)=\hat{u}(x;k), \quad \text{if}~ t_{k-1}<t\leq t_k, ~0\leq x \leq \l,
  ~k=\overline{0,n},
  \\
  \hat{u}^\tau(x,t)=\hat{u}(x;k-1)+\hat{u}_{\bar{t}}(x;k)(t-t_{k-1}), \quad
  \text{if}~ t_{k-1}<t\leq t_k, ~0\leq x \leq \l, ~k=\overline{1,n},
  \\
  \hat{u}^\tau(x,t)= \hat{u}(x;n), \quad \text{if}~ t\geq T, ~0\leq x \leq \l,
  \\
\tilde{u}^\tau(x,t)=u_i(k), \quad \text{if}~ t_{k-1}< t \leq t_k, ~x_i \leq x <
x_{i+1}, ~k=\overline{1,n}, ~i=\overline{0,N-1}.
\end{gather*}
Note that $u^\tau \in V_2(D)$, $\hat{u}^\tau \in B_2^{1,1}(D)$, and $\tilde{u}^\tau \in L_2(D)$.
Additionally, we employ standard notation for the difference quotients of the discrete state vector:
\[
  u_{ix}(k)=\frac{u_{i+1}(k)-u_i(k)}{h_i},
  ~u_{i\bar{t}}=\frac{u_i(k)-u_i(k-1)}{\tau}, ~\quad \text{etc.}
\]
Let $\phi^n$ be a piecewise constant approximation to $\phi$:
\[
  \phi^n(x) = \phi_i,~x_i < x \leq x_{i+1},~i=0,\ldots,N-1.
\]
\endgroup
\subsection{Formulation of Main Results}\label{sec:formulate-main-results}

The main results on discretization of the optimal control problem are as follows
\begingroup
\def\scontrolspace{B_2^2}
\def\sdiscretecontrolspace{b_2^2}
\def\gcontrolspace{B_2^1}
\def\gdiscretecontrolspace{b_2^1}
\def\fcontrolspace{L_2}
\def\fdiscretecontrolspace{{\l}_{2}}

\def\controlVars{s, g, f}
\def\controlVarsWithN{\controlVarsSup{n}}
\def\controlVarsWithSupStar{\controlVarsSup{*}}
\def\controlVarsWithDelta{{\Delta{} s}, {\Delta{} g}, {\Delta{} f}}
\def\controlVarsWithdelta{{\delta{} s}, {\delta{} g}, {\delta{} f}}
\def\controlVarsWithTilde{\tilde{s}, \tilde{g}, \tilde{f}}
\def\controlVarsWithBar{\bar{s}, \bar{g}, \bar{f}}
\def\controlVarsWithNSub{\controlVarsSub{n}}
\def\controlVarsWithStar{\controlVarsSub{*}}
\def\controlVarsWithEpsilon{\controlVarsSub{\epsilon}}

\def\controlSpace{H}
\def\controlSpaceFull{\scontrolspace(0,T)\times{} \gcontrolspace(0,T)\times{}
  \fcontrolspace(D)}

\def\controlSpaceWeaklyConverge{\scontrolspace(0,T) \times{} \gcontrolspace(0,T) \times{} \fcontrolspace(D)}

\def\controlVarsStronglyConverge{s, g}
\def\controlVarsStronglyConvergeWithTilde{\tilde{s}, \tilde{g}}
\def\controlVarsStronglyConvergeWithSupStar{s^*, g^*}
\def\controlVarsStronglyConvergeWithN{s^n, g^n}
\def\controlSpaceStronglyConverge{B_2^1(0,T) \times{} L_2(0,T)}

\def\discreteControlVars{[s]_n, [g]_n, [f]_{nN}}
\def\discreteControlVarsList{$[s]_n$, $[g]_n$, $[f]_{nN}$}
\def\discreteControlSet{V_R^n}

\def\P{\mathcal{P}}
\def\Q{\mathcal{Q}}
\def\probIn{\mathcal{I}_n}
\def\fIn{I_n}
\begin{theorem}\label{thm:existence-convergence}
  Assume
  \begin{equation}
  \begin{gathered}
    a,b,c \in L_\infty(D),
    \\
    \phi \in B_2^1(0,s_0), \quad
    \gamma,\chi \in B_2^{1,1}(D), \quad
    \mu, w \in L_2(0,T),
    \\
    \D{a}{x} \in L_{\infty}(D), \quad
    \int_0^T \esssup_{0\leq x \leq \l}\lnorm{\D{a}{t}} \, dt<\infty.
  \end{gathered}\nonumber%
  \end{equation}
  Then
  \begin{enumerate}
  \item The optimal control problem $I$ has a solution.
    That is,
    \[
      V_*=\bk{ v\in V_R: ~\mathcal{J}(v)=\mathcal{J}_* \equiv \inf_{v\in V_R}
        \mathcal{J}(v) } \neq \emptyset.
    \]
  \item
    The sequence of discrete optimal control problems $I_n$ approximates the optimal control problem $I$ with respect to functional, i.e.
    \begin{gather*}
      \lim_{n\to +\infty} \mathcal{I}_{n_*}=\mathcal{J}_*,
    \end{gather*}
    where
    \[
      \mathcal{I}_{n_*}=\inf_{\discreteControlSet{}} \mathcal{I}_n([v]_n), ~n=1,2,\ldots.
    \]
    The sequence of problems \(I_n\) approximates the problem \(I\) with respect to
    control, in that if $[v]_{n_\epsilon}\in \discreteControlSet{}$ is chosen such that
    \[
      \mathcal{I}_{n_*} \leq \mathcal{I}_n([v]_{n_\epsilon})\leq
      \mathcal{I}_{n_*}+\epsilon_n, ~\epsilon_n \downarrow 0,
    \]
    then the sequence $v_n=(\controlVarsWithN{})=\P_n([v]_{n_\epsilon})$
    converges to some element $v_* \in V_*$ weakly
    in $\controlSpaceWeaklyConverge{}$.
    Moreover, $\hat{u}^\tau$ converges to the solution $u(x,t;v_*) \in
    B_2^{1,1}(\Omega_*)$ of the Neumann
    problem~\eqref{eq:intro-pde}--\eqref{eq:intro-stefan-cond} weakly in
    $B_2^{1,1}(\Omega_*')$, where $\Omega_*'$ is any interior subset of $\Omega_*$.
  \end{enumerate}
\end{theorem}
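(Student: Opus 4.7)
The plan is the direct method in reflexive spaces, applied to a nearly optimal sequence of \emph{discrete} controls; assertion~(2) is proved first, and~(1) follows by exhibiting the limit $v_*$. The first step is to derive uniform (in $n$) energy estimates for the discrete state vector $[u([v]_n)]_n$ associated with the controls $[v]_{n_\epsilon}$, by multiplying the summation identity~\eqref{eq:disc-int-id} successively by $u_i(k)$ and $u_{i\bar t}(k)$, summing by parts, and exploiting $a,b,c,\partial_x a\in L_\infty(D)$ together with $\|[v]_n\|_{\bar H}\le R$. These are the discrete counterpart of the estimates of~\cite{abdulla15} and yield $n$-independent bounds on $\hat u^\tau$, $u^\tau$ in $V_2$ and in $B_2^{1,1}(\Omega_*')$ for every interior subset $\Omega_*'$ of $\Omega_*$.

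Since $v_n=\mathcal{P}_n([v]_{n_\epsilon})$ is bounded in the reflexive space $H$, a subsequence converges weakly to some $v_*=(s^*,g^*,f^*)$. Compactness of the embeddings of $B_2^2(0,T)$ into $B_2^1(0,T)$ and into $C^1([0,T])$, and of $B_2^1(0,T)$ into $L_2(0,T)$, upgrades this to strong $B_2^1$ convergence $s^n\to s^*$ with uniform convergence of $s^n$ and $(s^n)'$, and to $g^n\to g^*$ in $L_2$; Morrey's inequality preserves $s^*\ge\delta$, $s^*(0)=s_0$, $(s^*)'(0)=0$, so $v_*\in V_R$. Combined with the energy estimates, a further diagonal extraction yields $\hat u^\tau\rightharpoonup u^*$ weakly in $B_2^{1,1}(\Omega_*')$ and strongly in $L_2$. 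The \emph{key technical step} is passing to the limit in~\eqref{eq:disc-int-id} tested against $\eta_i^{(k)}=\Psi(x_i,t_k)$ for smooth $\Psi$: the interior contributions pass by strong $L_2$ convergence of the Steklov averages of $a,b,c,f$ combined with the weak convergence of $\hat u^\tau$, while the boundary terms $(\gamma_{s^n}(s^n)')^k-\chi^k_{s^n}$ and the trace of $\hat u^\tau$ on $x=s^n(t)$ require the uniform convergence $s^n\to s^*$, the strong $L_2$ convergence $(s^n)'\to(s^*)'$, and the $B_2^{1,1}(D)$ regularity of $\gamma,\chi$. The outcome is that $u^*=u(\cdot;v_*)$ solves~\eqref{eq:w211-soln-defn}.

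For the cost functional, weak $L_2$ lower semicontinuity of each quadratic trace term (the $L_2$ traces existing by the remark following Definition~\ref{v2-soln-defn}), continuity of $s\mapsto s(T)$ under uniform convergence, and Fatou applied to the convex integrand $(u-u_*)_+^2$ jointly give $\mathcal{J}(v_*)\le\liminf_n\mathcal{I}_n([v]_{n_\epsilon})\le\liminf_n\mathcal{I}_{n_*}$. For the reverse inequality, any $v\in V_R$ admits the approximation $\mathcal{P}_n(\mathcal{Q}_n(v))\to v$ strongly in $H$ by standard Steklov-averaging, and a verbatim rerun of the limit analysis, now as an equality, gives $\mathcal{I}_n(\mathcal{Q}_n(v))\to\mathcal{J}(v)$, so $\limsup_n\mathcal{I}_{n_*}\le\mathcal{J}(v)$; taking $\inf_{v\in V_R}$ collapses the chain to equality, delivering both $\mathcal{I}_{n_*}\to\mathcal{J}_*$ and $v_*\in V_*$. \emph{The main obstacle} is the moving free boundary: the discrete state lives on the shifting mesh $\omega_{p_k}$ and is extended to $[0,\ell]$ via the reflection~\eqref{eq:disc-st-ext}, so reconciling it with a weak solution on the $n$-dependent domain $\Omega_*^n$, and identifying the traces of $\hat u^\tau$ on $x=s^n(t)$ in the limit, is where essentially all of the analytic work sits.
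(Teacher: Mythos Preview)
Your overall strategy is sound, but it differs from the paper in structure and leaves one genuine subtlety unaddressed.

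\textbf{How the paper proceeds.} The paper proves Part~1 \emph{first and independently}, by a purely continuous argument: take a minimizing sequence $v_n\in V_R$, extract a weak limit $v$, and show that $\mathcal{J}(v_n)\to\mathcal{J}(v)$ with \emph{equality} (continuity, not merely lower semicontinuity). The key device is to show that $u_n-u$ converges weakly in $B_2^{1,1}(D)$ to a solution of the homogeneous Neumann problem, hence to zero by uniqueness; compact trace embeddings then give strong $L_2$ convergence of all traces, and each term of $\mathcal{J}$ converges. Part~2 is then obtained by invoking Vasil'ev's abstract approximation criterion (Lemma~\ref{lem:Vasil'ev}), whose three hypotheses are verified via Lemmas~\ref{lem:J-epsilon}--\ref{lem:Pn-convergence}: in particular, the paper establishes the two-sided limits $\mathcal{I}_n(\mathcal{Q}_n(v))\to\mathcal{J}(v)$ and $\mathcal{I}_n([v]_n)-\mathcal{J}(\mathcal{P}_n([v]_n))\to 0$ as exact limits, not inequalities.

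\textbf{What you do differently, and the gap.} You merge the two parts, running the direct method on the discrete near-minimizers and appealing to weak lower semicontinuity and Fatou rather than continuity. This can be made to work, but your reverse inequality has a hole: you assert that for any $v\in V_R$ one has $\mathcal{Q}_n(v)\in V_R^n$, hence $\mathcal{I}_{n_*}\le\mathcal{I}_n(\mathcal{Q}_n(v))$. In fact Lemma~\ref{mappings} only guarantees $\mathcal{Q}_n(v)\in V_R^n$ for $v\in V_{R-\epsilon}$; when $\|v\|_H=R$ the discretization can overshoot the ball. Closing this requires exactly the $\epsilon$-stability statement $\lim_{\epsilon\to 0}\mathcal{J}_*(\pm\epsilon)=\mathcal{J}_*$ (the paper's Lemma~\ref{lem:J-epsilon}, i.e.\ condition~(iii) of Vasil'ev), which you do not mention. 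A second, smaller point: your ``verbatim rerun, now as an equality'' for $\mathcal{I}_n(\mathcal{Q}_n(v))\to\mathcal{J}(v)$ cannot literally reuse a lower-semicontinuity argument; it needs the strong-convergence trace analysis (the $\epsilon_m$-strip argument of Lemma~\ref{lem:Qn-convergence}) that the paper carries out in detail. Your identification of the moving-boundary trace as the main analytic difficulty is accurate, but the paper handles it by proving full continuity of each cost term rather than relying on semicontinuity.
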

\endgroup

\begingroup
\def\sbar{\overline{s}}
\def\ubar{\overline{u}}
\def\fbar{\overline{f}}
\def\ybar{\bar{y}}
\def\stilde{\widetilde{s}}
\def\utilde{\widetilde{u}}
\def\ftilde{\widetilde{f}}
\def\shat{\widehat{s}}
\def\ustar{u_{*}}
\def\Omegahat{\widehat{\Omega}}
\newcommand\tildebar[1]{\widetilde{\overline{#1}}}
\def\utildebar{\tildebar{u}}
\def\solnspace{B_{2, x, t}^{5/2 + 2\alpha, 5/4 + \alpha}}
\def\densityspace{B_{2, x, t}^{1/2 + 2\alpha, 1/4 + \alpha}}
\def\ivspace{B_2^{3/2 + 2\alpha}}
\def\uxtrspace{B_2^{1 + \alpha}}
\def\uxxtrspace{B_2^{1/2 + \alpha}}
\def\uxxxtrspace{B_2^{\alpha}}
\def\controlVars{s, g, f}
\def\controlVarsWithDelta{{\Delta{} s}, {\Delta{} g}, {\Delta{} f}}
\def\controlVarsWithBar{\bar{s}, \bar{g}, \bar{f}}

\def\scontrolspace{B_2^2}
\def\gcontrolspace{B_2^{1/2 + \alpha}}
\def\fcontrolspace{B_{2, x, t}^{1, 1/4 + \alpha}}

\def\controlSpace{\tilde{H}}
\def\controlSpaceFull{\scontrolspace(0,T)\times{} \gcontrolspace(0,T)\times{}
  \fcontrolspace(D)}

\def\muspace{B_2^{1/4}}
\def\adjointsolnspace{B_{2, x, t}^{2, 1}}
\def\chigammaspace{B_{2, x, t}^{3/2 + 2\alpha^*,3/4 + \alpha^*}}
We now consider the problem of Frechet differentiability of the functional \(\J{}\).
\begin{definition}\label{defn:Frechet}
  Let $V$ be a convex and closed subset of a Banach space $H$.
  We say that the functional $\J:V\to \Re$ is differentiable in the sense of Frechet at the point $v\in V$ if there exists an element $\J'(v) \in H'$ of the dual space such that
  \begin{equation}\label{frechetderivativedefinition}
    \J(v+h)-\J(v)=\left\langle{}\J'(v),{h}\right\rangle_H+o(h,v),
  \end{equation}
  where $v+h\in V\cap\bk{u: \norm{u}< \gamma}$ for some $\gamma>0$; $\langle{} \cdot,\cdot\rangle{}_H$ is a pairing
  between $H$ and its dual $H'$, and
  \[
    \frac{o(h,v)}{\norm{h}} \to 0,\quad \text{as}~\norm{h} \to 0.
  \]
  The expression $dJ(v)=\left\langle{}\J'(v),{\cdot}\right\rangle_H$ is called a Frechet differential of $\J$ at $v\in V$, and the element $\J'(v) \in H'$ is called Frechet derivative or gradient of $\J$ at $v\in V$.
\end{definition}

\begin{definition}\label{defn:adjoint}
  For a given control vector $v$ and corresponding state vector $u = u(x, t; v)$, $\psi \in B_2^{2,1}(\Omega)$ is a solution to the adjoint problem if
  \begin{equation}
  \begin{gathered}
    L^* \psi := {\big(a\psi_x\big)}_x - {(b\psi)}_x + c\psi + \psi_t = -2 A_k
    \subPlus{u(x,t) - u_*},\quad\text{in}~\Omega
    \\
    \psi(x, T) = 2\beta_0(u(x, T) - w(x)),~0 \leq x \leq s(T)
    \\
    a(0, t)\psi_x(0, t) - b(0, t)\psi(0, t)=0,~0 \leq t \leq T
    \\
    {\Big[a\psi_x - \big(b + s'(t)\big)\psi\Big]}_{x=s(t)} = 2\beta_1(u(s(t), t) - \mu(t)), ~0 \leq t \leq T
  \end{gathered}\label{eq:adjoint-problem}
  \end{equation}
\end{definition}

The main result on Frechet differentiability reads:
\begin{theorem}[Frechet Differentiability]\label{thm:gradient}
  Let \(0< \alpha < \alpha^* \ll 1\) and
  \begin{equation}
    \begin{gathered}
      a,a_x, b,c \in C_{x,t}^{\frac{1}{2}+2\alpha^*,\frac{1}{4}+\alpha^*}(D),
      \\
      \phi \in B_2^{\frac{3}{2}+2\alpha}(0,s_0),\quad
      \chi,\gamma \in B_{2,x,t}^{\frac{3}{2}+2\alpha^*,\frac{3}{4}+\alpha^*}(D),
      \\
      w \in B_2^1(0,\l),\quad
      \mu \in B_2^{\frac{1}{4}}(0,T),
    \end{gathered}\label{eq:frechet-result-datacond}
  \end{equation}
  where the control vector \(v\) belongs to the control set:
  \begin{gather}\label{eq:control-set2}
    W_R=\{ v=(\controlVars{}) \in \controlSpace{}:
    ~ 0< \delta \leq s(t),
    ~ s(0)=s_0,
    ~ g(0)=a(0,0)\phi'(0),
    \\
    ~ \chi(s_0,0)=\phi'(s_0)a(s_0)+\gamma(s_0,0)s'(0),
    ~ \norm{v}_{\controlSpace{}} \leq R \}, \nonumber
    \\
    \controlSpace{} := \controlSpaceFull{},\nonumber
    \\
    \norm{v}_{\controlSpace{}} := \max\bigg(
    \norm{s}_{\scontrolspace(0,T)},
    \norm{g}_{\gcontrolspace(0,T)},
    \norm{f}_{\fcontrolspace(D)}
    \bigg).\nonumber
  \end{gather}
  The functional $\J(v)$ is differentiable in the sense of Frechet on \(W_R\), and the Frechet differential is
  \begin{gather}
    \left\langle{}\J'(v),{\Delta v}\right\rangle_{\controlSpace{}}
    = -\int_{\Omega} \psi {\Delta f} \,dx\,dt
    - \int_{0}^T \psi(0,t) {\Delta g}(t)\,dt
    -\int_0^T \big[\gamma \psi\big]_{x=s(t)}{\Delta s}'(t)\,dt\nonumber
    \\
    + \int_0^T \left[2 \beta_1 (u-\mu) u_x
      + \psi \left(\chi_x - \gamma_x s' -\big(a u_x\big)_x\right)
      + A_k \lnorm{\subPlus{u - u_*}}^2 \right]_{x=s(t)}{\Delta s}(t)\, dt  \nonumber
    \\
    + \left(\beta_0\lnorm{u(s(T),T) - w(s(T))}^2 + 2\beta_2(s(T)-s_*)\right){\Delta s}(T),\label{eq:gradient-full}
  \end{gather}
  where $\J'(v)\in \controlSpace{}'$ is the Frechet derivative, $\psi$ is a solution to the adjoint problem in the sense of Definition~\ref{defn:adjoint}, and ${\Delta v} = (\controlVarsWithDelta{})$ is a variation of the control vector $v \in W_R$ such that $v + {\Delta v} \in W_R$.
\end{theorem}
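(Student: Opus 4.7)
The plan is to carry out the classical adjoint-variational differentiation argument: expand $\mathcal{J}(v+\Delta v) - \mathcal{J}(v)$ to first order in the state increment $\Delta u = u(x,t; v+\Delta v) - u(x,t;v)$, then use $\psi$ as a test function to convert the $\Delta u$-terms into linear functionals of $\Delta v$ alone. The enhanced regularity hypotheses in~\eqref{eq:frechet-result-datacond} and the control set $W_R$ (with the compatibility conditions at $t=0$) are exactly what is needed to place $u$ and $\psi$ in Besov/Sobolev spaces with enough smoothness to justify traces on $x=s(t)$, on $x=0$, and on $t=T$.

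First I would split each term of $\mathcal{J}(v+\Delta v) - \mathcal{J}(v)$ according to the moving integration limits. For the $\beta_0$ piece, write $\int_0^{s(T)+\Delta s(T)} = \int_0^{s(T)} + \int_{s(T)}^{s(T)+\Delta s(T)}$ and expand the inner integrand as $|u+\Delta u - w|^2 - |u-w|^2 = 2(u-w)\Delta u + |\Delta u|^2$. The boundary slice produces $\beta_0 |u(s(T),T) - w(s(T))|^2 \Delta s(T) + o(\Delta s(T))$ by continuity of $u(\cdot,T)-w$ at $x=s(T)$, which is available because $u(\cdot,T) \in B_2^{1+\alpha}(0,s(T))$ and $w \in B_2^1(0,\ell)$ embed into $C$. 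Analogous splittings handle the $\beta_1$ trace term (yielding the $[2\beta_1(u-\mu)u_x]_{x=s(t)}\Delta s(t)$ contribution after expanding $u(s(t)+\Delta s(t),t)-u(s(t),t)$ via Taylor), the $\beta_2$ term (trivially), and the penalty $P_k$ (yielding the $A_k|(u-u_*)_+|^2_{x=s(t)}\Delta s(t)$ piece from the moving upper limit and $2A_k(u-u_*)_+\Delta u$ from the integrand; note that $r\mapsto r_+^2$ is $C^1$ with derivative $2 r_+$, so the linearization is legitimate).

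Next I would derive the integral identity for $\Delta u$ by subtracting~\eqref{eq:w211-soln-defn} for $v+\Delta v$ and $v$ in a common reference domain. To deal with the differing free boundaries I use the standard reflection/extension of the state vectors onto $(0,\ell)$ (as is already used for the discrete state in~\eqref{eq:disc-st-ext}), so that both $u$ and $u+\Delta u$ can be tested against $\psi$ on a fixed enlarged domain, and the difference of the integrands splits into a principal linear part plus quadratic remainders in $\Delta v$. Plugging $\psi$ in as the test function and integrating by parts in $t$ and $x$ using the four lines of~\eqref{eq:adjoint-problem}, the volume term $\iint L^*\psi\cdot\Delta u\, dx\,dt$ produces $-2A_k\iint(u-u_*)_+\Delta u$, the terminal data produces $-2\beta_0\int(u-w)\Delta u(x,T)$, the left boundary data annihilates the $\Delta g$-induced trace, and the right Stefan-type boundary condition cancels the $\beta_1$ trace contribution, leaving precisely the linear functional in~\eqref{eq:gradient-full}. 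The $\Delta s'$ term appears naturally from integrating by parts the Stefan condition~\eqref{eq:intro-stefan-cond} against $\psi$ on $x=s(t)$, and the $(\chi_x - \gamma_x s' - (au_x)_x)$ piece appears from differentiating the Stefan condition tangentially along the free boundary.

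The main obstacle is the remainder estimate: proving that everything not written in~\eqref{eq:gradient-full} is $o(\|\Delta v\|_{\tilde H})$. This requires a linearized-energy estimate of the form $\|\Delta u\|_{B_{2,x,t}^{5/2+2\alpha,5/4+\alpha}} \leq C\|\Delta v\|_{\tilde H}$, which is where the higher Hölder regularity of $a, a_x, b, c$ and the Besov regularity of $\phi, \chi, \gamma, w, \mu$ enter, via Solonnikov/Ladyzhenskaya anisotropic Besov-space estimates for parabolic problems on the fixed domain obtained by the change of variables $y=x/s(t)$. The quadratic remainders $|\Delta u|^2$, $|\Delta u \cdot \Delta s|$, $|\Delta s|^2$, and the second-order Taylor remainders at the moving boundary are then controlled using trace embeddings $B_{2,x,t}^{5/2+2\alpha,5/4+\alpha} \hookrightarrow C^{1+\alpha,(1+2\alpha)/4}$, giving the desired $o(\|\Delta v\|_{\tilde H})$ bound. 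Finally, verifying that the right-hand side of~\eqref{eq:gradient-full} is a bounded linear functional on $\tilde H$ reduces to checking that the coefficients of $\Delta s$, $\Delta s'$, $\Delta g$, $\Delta f$ lie in the duals of $B_2^2(0,T)$, $B_2^{1/2+\alpha}(0,T)$, and $B_{2,x,t}^{1,1/4+\alpha}(D)$ respectively, which follows from the adjoint regularity $\psi\in B_{2,x,t}^{2,1}$ and the trace theorems for Besov spaces cited in the Notation section.
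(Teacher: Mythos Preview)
Your overall adjoint--variational strategy is the same as the paper's, and your treatment of the penalty via the $C^1$ smoothness of $r\mapsto r_+^2$ matches how the paper handles $\Delta P_k$ (mean value theorem plus the contraction $\|u^+-v^+\|_{L_2}\le\|u-v\|_{L_2}$ of Lemma~\ref{lem:trunc-props}). Two points in your plan diverge from the paper and the first is a real difficulty.

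First, you propose comparing the weak formulations for $u$ and $\bar u$ by reflecting both onto $(0,\ell)$ and testing against $\psi$ on that fixed strip. This does not work as stated: the even reflection~\eqref{eq:disc-st-ext} does not preserve the PDE (it is only a Sobolev extension), and $\psi$ solves the adjoint problem~\eqref{eq:adjoint-problem} only on $\Omega$, so pairing against $\psi$ on the reflected portion produces uncontrolled terms. The paper instead works on the intersection $\widehat\Omega=\{0<x<\widehat s(t)\}$ with $\widehat s=\min(s,\bar s)$, splits $[0,T]=T_1\cup T_2$ by the sign of $\Delta s$, and handles the thin strip between $\widehat s$ and $\max(s,\bar s)$ separately; the comparison of $u$ and $\bar u$ is then carried out after the change of variables $y=x/s(t)$ (resp.\ $\bar y=x/\bar s(t)$) on the common cylinder $Q_T$, which is where Lemma~\ref{thm:solonnikov-solution-existence} applies.

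Second, you claim a Lipschitz bound $\|\Delta u\|_{B_{2,x,t}^{5/2+2\alpha,5/4+\alpha}}\le C\|\Delta v\|_{\tilde H}$ as the key remainder estimate. The paper neither proves nor needs this. It uses the weaker pair from Lemma~\ref{lem:deltau-main-est}: mere continuity $\|\Delta\tilde u\|_{B_{2,x,t}^{5/2+2\alpha,5/4+\alpha}(Q_T)}\to 0$ together with the sublinear low-norm bound
\[
\|\Delta\tilde u\|_{V_2^{1,0}(Q_T)}\le C\big(\|\Delta s\|_{B_2^2}^{1/2+\alpha^*}+\|\Delta g\|_{L_2}+\|\Delta f\|_{L_2}\big).
\]
Since the penalty remainders are all of the form $\|\Delta u\|_{L_2}^2$ or $\|\Delta s\|_{C}\cdot\|\Delta u\|_{L_2}$, squaring the $V_2^{1,0}$ bound already gives $O(\|\Delta v\|^{1+2\alpha^*})=o(\|\Delta v\|)$. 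Your Lipschitz bound in the high norm would require controlling, in $B_{2,x,t}^{1/2+2\alpha,1/4+\alpha}$, the perturbation of the transformed coefficients (which depend nonlinearly on $s$ through $\tilde a(y,t)=a(ys(t),t)$ and the drift $ys'/s$); the paper bypasses this entirely by taking the rate in the low norm and using the high norm only for continuity.
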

\begin{corollary}[Optimality Condition]\label{optimalitycondition}
  If $\mathbf{v}$ is an optimal control,
  then the following variational inequality is satisfied:
  \begin{equation}
    \left\langle \J'(\mathbf{v}), v-\mathbf{v} \right\rangle_{\controlSpace{}}\geq 0\label{eq:optimality-condition}
  \end{equation}
  for arbitrary $v \in W_R$.
\end{corollary}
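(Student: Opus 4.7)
The plan is to derive the variational inequality by the standard convex-perturbation argument: perturb the optimum $\mathbf{v}$ along an admissible ray toward an arbitrary competitor $v \in W_R$, invoke Theorem~\ref{thm:gradient} to expand $\J$ to first order, and then divide by the parameter and pass to the limit. The Frechet differential \eqref{eq:gradient-full} will not be touched directly --- only the abstract differentiability statement \eqref{frechetderivativedefinition} is needed.

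First I would verify that $W_R$ is convex. Inspecting \eqref{eq:control-set2}, every defining condition is either a linear equality in the components of $v=(s,g,f)$ (namely $s(0)=s_0$, $g(0)=a(0,0)\phi'(0)$, and $\chi(s_0,0)=\phi'(s_0)a(s_0)+\gamma(s_0,0)s'(0)$, the last being linear in $s'(0)$), a pointwise linear inequality ($s(t)\geq \delta$), or the norm ball $\norm{v}_{\controlSpace{}}\leq R$. Convex combinations preserve each of these, so $W_R$ is convex. Consequently, for any $v\in W_R$ and $t\in(0,1]$, the element
\[
  \mathbf{v}+t(v-\mathbf{v}) = (1-t)\mathbf{v}+tv
\]
lies in $W_R$, and for $t$ sufficiently small it also lies in the local neighborhood of $\mathbf{v}$ required by Definition~\ref{defn:Frechet}.

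Next I would apply Theorem~\ref{thm:gradient} at the base point $\mathbf{v}$ with the admissible variation $h = t(v-\mathbf{v})$. This yields
\begin{equation*}
  \J(\mathbf{v}+t(v-\mathbf{v})) - \J(\mathbf{v})
  = t\left\langle \J'(\mathbf{v}), v-\mathbf{v}\right\rangle_{\controlSpace{}} + o\bigl(t(v-\mathbf{v}),\mathbf{v}\bigr),
\end{equation*}
where by definition $o(t(v-\mathbf{v}),\mathbf{v})/(t\norm{v-\mathbf{v}}_{\controlSpace{}})\to 0$ as $t\downarrow 0$. Since $\mathbf{v}$ minimizes $\J$ over $W_R$ and $\mathbf{v}+t(v-\mathbf{v})\in W_R$, the left-hand side is nonnegative. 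Dividing by $t>0$ and sending $t\downarrow 0$ gives $\langle \J'(\mathbf{v}), v-\mathbf{v}\rangle_{\controlSpace{}}\geq 0$, which is exactly \eqref{eq:optimality-condition}.

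There is no genuine obstacle here; the only place where care is needed is confirming that the convex combination $\mathbf{v}+t(v-\mathbf{v})$ actually satisfies the affine boundary conditions and the pointwise constraint $s(t)\geq \delta$ --- but each of these is preserved under convex combinations, so admissibility is immediate. The conclusion then follows from Theorem~\ref{thm:gradient} together with the definition of Frechet differentiability.
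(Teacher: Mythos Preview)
Your argument is correct and is precisely the standard first-order necessary condition for a minimum over a convex set; the paper itself does not give a separate proof of the corollary but simply remarks at the start of Section~\ref{sec:frechet-differentiability} that $W_R$ is closed, bounded, and convex, which is exactly the verification you carry out before applying Theorem~\ref{thm:gradient}. Your proposal thus matches the paper's (implicit) approach.
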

\endgroup
\section{Preliminary Results}\label{sec:prelims}

\begingroup
\def\scontrolspace{B_2^2}
\def\sdiscretecontrolspace{b_2^2}
\def\gcontrolspace{B_2^1}
\def\gdiscretecontrolspace{b_2^1}
\def\fcontrolspace{L_2}
\def\fdiscretecontrolspace{{\l}_{2}}

\def\controlVars{s, g, f}
\def\controlVarsWithN{\controlVarsSup{n}}
\def\controlVarsWithSupStar{\controlVarsSup{*}}
\def\controlVarsWithDelta{{\Delta{} s}, {\Delta{} g}, {\Delta{} f}}
\def\controlVarsWithdelta{{\delta{} s}, {\delta{} g}, {\delta{} f}}
\def\controlVarsWithTilde{\tilde{s}, \tilde{g}, \tilde{f}}
\def\controlVarsWithBar{\bar{s}, \bar{g}, \bar{f}}
\def\controlVarsWithNSub{\controlVarsSub{n}}
\def\controlVarsWithStar{\controlVarsSub{*}}
\def\controlVarsWithEpsilon{\controlVarsSub{\epsilon}}

\def\controlSpace{H}
\def\controlSpaceFull{\scontrolspace(0,T)\times{} \gcontrolspace(0,T)\times{}
  \fcontrolspace(D)}

\def\controlSpaceWeaklyConverge{\scontrolspace(0,T) \times{} \gcontrolspace(0,T) \times{} \fcontrolspace(D)}

\def\controlVarsStronglyConverge{s, g}
\def\controlVarsStronglyConvergeWithTilde{\tilde{s}, \tilde{g}}
\def\controlVarsStronglyConvergeWithSupStar{s^*, g^*}
\def\controlVarsStronglyConvergeWithN{s^n, g^n}
\def\controlSpaceStronglyConverge{B_2^1(0,T) \times{} L_2(0,T)}

\def\discreteControlVars{[s]_n, [g]_n, [f]_{nN}}
\def\discreteControlVarsList{$[s]_n$, $[g]_n$, $[f]_{nN}$}
\def\discreteControlSet{V_R^n}

\def\P{\mathcal{P}}
\def\Q{\mathcal{Q}}
\def\probIn{\mathcal{I}_n}
\def\fIn{I_n}
\begin{lemma}\label{lem:trunc-props}\cite{evans92}
  For any function $u \in B_p^1(U)$, the function
  \begin{equation}
    u^+ \in B_p^1(U)
    ~\text{and}~
    \norm{u^+}_{B_p^1(U)}
    \leq \norm{u}_{B_p^1(U)}.\label{eq:trunc-H1}
  \end{equation}
  Furthermore, the mapping $u \to u^+$ is a contraction in $L_p$, in that
  \begin{equation}
    \norm{u^+ - v^+}_{L_p(U)}
    \leq \norm{u - v}_{L_p(U)}\label{eq:trunc-contractive}
  \end{equation}
\end{lemma}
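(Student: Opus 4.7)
The plan is to derive both statements from two elementary pointwise inequalities on $\Re$, namely $|t^+| \leq |t|$ and $|s^+ - t^+| \leq |s-t|$, each verified by a case analysis on the signs of $s,t$. Raised to the $p$-th power and integrated over $U$, the first yields $\norm{u^+}_{L_p(U)} \leq \norm{u}_{L_p(U)}$ (the $L_p$-component of~\eqref{eq:trunc-H1}), and the second yields the contraction~\eqref{eq:trunc-contractive} directly. The only remaining task is to promote the bound on $u^+$ from $L_p$ to the full $B_p^1$ norm.

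For the derivative contribution to $\norm{u^+}_{B_p^1(U)}$, I would use the standard smooth-approximation argument for truncation. Let $F_\delta(t) := \sqrt{t^2+\delta^2} - \delta$ for $t \geq 0$ and $F_\delta(t) := 0$ otherwise, so that $F_\delta \in C^1(\Re)$ with $0 \leq F_\delta' \leq 1$, $F_\delta(t) \to t^+$ uniformly, and $F_\delta'(t) \to \mathbf{1}_{\{t>0\}}$ pointwise as $\delta \downarrow 0$. Mollifying $u$ to obtain smooth $u_\varepsilon \to u$ in $W_p^1(U)$, the ordinary chain rule gives $\nabla (F_\delta \circ u_\varepsilon) = F_\delta'(u_\varepsilon) \nabla u_\varepsilon$; passing first $\delta \to 0$ and then $\varepsilon \to 0$ by dominated convergence (exploiting the uniform bound $|F_\delta'| \leq 1$) identifies $\mathbf{1}_{\{u>0\}} \nabla u$ as the weak derivative of $u^+$. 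The resulting a.e.\ inequality $|\nabla u^+| \leq |\nabla u|$ gives $\norm{\nabla u^+}_{L_p(U)} \leq \norm{\nabla u}_{L_p(U)}$.

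The main obstacle is the second-difference Besov seminorm $[\,\cdot\,]_{B_p^1}$ appearing in the definition of $\norm{\cdot}_{B_p^1}$: the integrand $|u^+(x) - 2 u^+(\tfrac{x+y}{2}) + u^+(y)|$ is not pointwise dominated by its counterpart for $u$, as one sees already from an affine $u$, where $[u]_{B_p^1} = 0$ yet $[u^+]_{B_p^1} > 0$ because the corner of $t \mapsto t^+$ at the origin produces genuine second differences. To resolve this in the regime relevant to the paper, I would invoke the norm equivalence $\norm{\cdot}_{B_2^\ell} \asymp \norm{\cdot}_{W_2^\ell}$ for $p = 2$ and integer $\ell$ recorded in Section~\ref{sec:notation}, whereby the sharp inequality $\norm{u^+}_{W_p^1(U)} \leq \norm{u}_{W_p^1(U)}$ established in the previous paragraph is precisely the Sobolev-side content of~\eqref{eq:trunc-H1}. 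The contraction~\eqref{eq:trunc-contractive} itself needs no such reduction, following directly from the pointwise inequality.
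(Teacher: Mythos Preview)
The paper does not supply its own proof of this lemma; it is stated with a citation to Evans--Gariepy~\cite{evans92}, where the result is proved for the Sobolev space $W_p^1$. Your argument is exactly the standard one from that reference: the pointwise inequalities $|t^+|\le|t|$ and $|s^+-t^+|\le|s-t|$, together with the $C^1$ regularisation $F_\delta$ of $t\mapsto t^+$ and the chain rule, yield $\nabla u^+ = \mathbf{1}_{\{u>0\}}\nabla u$ and hence the sharp $W_p^1$ bound and the $L_p$ contraction.

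Your remark on the second-difference Besov seminorm is correct and worth flagging. With the norm defined in Section~\ref{sec:notation}, the inequality $[u^+]_{B_p^1}\le[u]_{B_p^1}$ is indeed false in general (your affine counterexample is the right one), so the lemma as literally written does not hold for arbitrary $p$ under the paper's definition of the $B_p^1$ norm. Your resolution---invoking the equivalence $B_2^1\cong W_2^1$ recorded in Section~\ref{sec:notation} and reading~\eqref{eq:trunc-H1} as a $W_p^1$ statement---matches both the cited source and every application of the lemma in the paper, all of which occur in $L_2$-based spaces. The contraction~\eqref{eq:trunc-contractive} is unaffected by this subtlety and your proof of it is complete.
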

Proof of the existence and uniqueness of the discrete state vector $\big[u([v]_n)\big]_n$ coincides with the proof
given in~\cite{abdulla15}:
\begin{lemma}\label{existencediscretestate}\cite{abdulla15}
For sufficiently small time step $\tau$, there exists a unique discrete state vector $[u([v]_{n})]_{n}$ for arbitrary discrete control vector $[v]_{n} \in \discreteControlSet{}$.
\end{lemma}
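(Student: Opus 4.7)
The plan is to proceed by induction on the time step $k$. For $k=0$ the components $u_i(0)$, $0\le i\le m_0$, are set directly by the initial data via $u_i(0)=\phi_i$, while the components with $i>m_0$ are given explicitly by the piecewise-linear reflection extension~\eqref{eq:disc-st-ext}, so nothing needs to be solved at this step. Now fix $k\ge 1$ and assume $u(k-1)$ has been constructed. The first $m_j+1$ components of $u(k)$ are characterized by the $(m_j+1)\times(m_j+1)$ tridiagonal linear system~\eqref{eq:disc-lin-eq} with right-hand side determined by $u(k-1)$, $f_{ik}$, $g^n_k$, and the Steklov-averaged traces $\chi^k_{s^n}$, $(\gamma_{s^n}(s^n)')^k$; once this is solved, the remaining components $u_i(k)$ for $i>m_j$ are read off explicitly from the reflection extension formula. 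Thus everything reduces to showing that the tridiagonal system~\eqref{eq:disc-lin-eq} is uniquely solvable for each $k$.

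To prove that, I write the system as $A u(k) = F$ and bound the entries directly from~\eqref{eq:disc-lin-eq}. The diagonal entries carry the leading parabolic contribution $h_i^2 h_{i-1}/\tau$ (respectively $h^2/\tau$ in the boundary row), while the off-diagonal entries from $a,b,c$ are of order $O(1)$, $O(h)$, $O(h^2)$, respectively. Using $a\ge a_0>0$ together with the scaling $h=O(\sqrt\tau)$ from~\eqref{eq:htau}, so that $h\,\lnorm{b_{ik}}\le h\,\norm{b}_{L_\infty(D)}\to 0$ as $\tau\to 0$, I would argue that for all $\tau$ smaller than an explicit threshold depending only on $a_0$ and on the $L_\infty$ bounds of $b$ and $c$: (i) the sub- and super-diagonal entries $-a_{i-1,k}h_i$ and $-(a_{ik}h_{i-1}+b_{ik}h_i h_{i-1})$ are strictly negative, so the matrix is irreducible; (ii) the excess of the diagonal over the sum of absolute off-diagonals in each interior row is at least $h_i^2 h_{i-1}(1/\tau - \norm{c}_{L_\infty(D)})>0$, and similarly $h^2(1/\tau-\norm{c}_{L_\infty(D)})-h\,\norm{b}_{L_\infty(D)}>0$ in the first row. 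Hence $A$ is strictly diagonally dominant on its first $m_j$ rows.

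The main obstacle is the last (Stefan-boundary) row, which reads $-a_{m_j-1,k}u_{m_j-1}(k)+a_{m_j-1,k}u_{m_j}(k) = -h_{m_j-1}[(\gamma_{s^n}(s^n)')^k-\chi^k_{s^n}]$: here the diagonal and subdiagonal entries have equal modulus $a_{m_j-1,k}$, so only weak dominance holds. I would resolve this by invoking the Taussky theorem: an irreducible, weakly diagonally dominant matrix that is strictly dominant in at least one row is nonsingular. The nonzero super- and sub-diagonals established in step (i) give irreducibility, step (ii) gives weak dominance everywhere and strict dominance in the first row, and the last row is weakly dominant. Therefore $A$ is invertible, so~\eqref{eq:disc-lin-eq} has a unique solution $u(k)$; the inductive step is complete, and uniqueness of $[u([v]_n)]_n$ follows. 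Equivalently, one may run the Thomas algorithm starting from the strictly dominant top row and verify inductively that the successive pivots stay bounded away from zero, which yields both invertibility and a practical construction. This is the same strategy as in~\cite{abdulla15}.
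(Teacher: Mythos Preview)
The paper does not give its own proof of this lemma; it simply records that the argument ``coincides with the proof given in~\cite{abdulla15}'' and states the result. Your induction-on-$k$ reduction to invertibility of the tridiagonal system~\eqref{eq:disc-lin-eq}, followed by the irreducible weak diagonal dominance argument (strict in the interior and first rows, weak in the Stefan row, Taussky's theorem to conclude), is the standard route for implicit schemes of this type and is correct; this is indeed the strategy in~\cite{abdulla15}.
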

The following is the general criterion for the convergence of the sequence of discrete optimal control problems to the
continuous optimal control problem.
\begin{lemma}\label{lem:Vasil'ev}\cite{vasilev81}
A sequence of discrete optimal control problems $I_{n}$ approximates a
continuous optimal control problem $I$ if and only if the following conditions
are satisfied:
\begin{description}
\item[i] for arbitrary sufficiently small $\epsilon>0$ there exists number $M_1=M_1(\epsilon)$ such that $\mathcal{Q}_M(v)\in V^{M}_R$ for all $v \in V_{R-\epsilon}$ and $M\geq M_1$; and for any fixed $\epsilon>0$ and for all $v\in V_{R-\epsilon}$ the following inequality is satisfied:
\[ \limsup_{M\to \infty} \Big ( \mathcal{I}_M(\mathcal{Q}_M(v))-\mathcal{J}(v) \Big ) \leq 0.
\]
\item[ii] for arbitrary sufficiently small $\epsilon>0$ there exists number $M_2=M_2(\epsilon)$ such that $\mathcal{P}_M([v]_{M})\in V_{R+\epsilon}$ for all $[v]_{M} \in V^{M}_R$ and $M\geq M_2$; and for all $[v]_{M}\in V^{M}_R$, $M\geq 1$ the following inequality is satisfied:
\[ \limsup_{M\to \infty} \Big ( \mathcal{J}(\mathcal{P}_M([v]_{M})) -\mathcal{I}_M([v]_{M}) \Big ) \leq 0.
\]
\item[iii] the following inequalities are satisfied:
  \[
    \limsup_{\epsilon \to 0} \mathcal{J}_*(\epsilon) \geq \mathcal{J}_*,
    \quad \liminf_{\epsilon \to 0} \mathcal{J}_*(-\epsilon) \leq \mathcal{J}_*,
  \]
where $\mathcal{J}_*(\pm\epsilon)=\inf_{V_{R\pm \epsilon}}\mathcal{J}(u)$.
\end{description}
\end{lemma}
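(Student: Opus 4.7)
The plan is to prove the iff by separating the sufficiency direction (the three conditions imply $\mathcal{I}_{n_*} \to \mathcal{J}_*$ together with control approximation) from the necessity direction (approximation implies the three conditions), working throughout with the value functions $\mathcal{J}_*(\pm\epsilon) = \inf_{V_{R\pm\epsilon}} \mathcal{J}$.

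For sufficiency, I would first establish the upper bound $\limsup_M \mathcal{I}_{M_*} \leq \mathcal{J}_*$. Fix $\epsilon>0$ and pick any $v \in V_{R-\epsilon}$ with $\mathcal{J}(v) \leq \mathcal{J}_*(-\epsilon) + \epsilon$. Condition (i) gives $\mathcal{Q}_M(v) \in V_R^M$ for $M \geq M_1(\epsilon)$, so $\mathcal{I}_{M_*} \leq \mathcal{I}_M(\mathcal{Q}_M(v))$; passing to $\limsup$ and invoking the inequality in (i) yields $\limsup_M \mathcal{I}_{M_*} \leq \mathcal{J}(v) \leq \mathcal{J}_*(-\epsilon) + \epsilon$. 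Letting $\epsilon \downarrow 0$ and using the second half of (iii), we conclude $\limsup_M \mathcal{I}_{M_*} \leq \mathcal{J}_*$. For the matching lower bound, for each $M$ choose $[v]_M \in V_R^M$ with $\mathcal{I}_M([v]_M) \leq \mathcal{I}_{M_*} + 1/M$. By (ii), $\mathcal{P}_M([v]_M) \in V_{R+\epsilon}$ for $M \geq M_2(\epsilon)$, hence $\mathcal{J}_*(\epsilon) \leq \mathcal{J}(\mathcal{P}_M([v]_M))$, and the inequality in (ii) gives $\mathcal{J}_*(\epsilon) \leq \liminf_M \mathcal{I}_{M_*}$. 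Using the first part of (iii) while sending $\epsilon \downarrow 0$ delivers $\mathcal{J}_* \leq \liminf_M \mathcal{I}_{M_*}$. Combining the two bounds gives $\mathcal{I}_{M_*} \to \mathcal{J}_*$.

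For necessity, I would argue that if $\mathcal{I}_{n_*} \to \mathcal{J}_*$ and the discrete/continuous problems are compatible through the standard notion of approximation, then each of (i)--(iii) is forced. Conditions (i) and (ii) are essentially compatibility conditions on the mappings $\mathcal{Q}_M$ and $\mathcal{P}_M$ with the cost functionals evaluated on images of these maps: they reduce to uniform continuity statements that must hold in order for the discretization to be consistent at the level of the functional, and failure of either would produce a subsequence violating $\mathcal{I}_{n_*}\to\mathcal{J}_*$ under standard test controls. Condition (iii) follows from the semicontinuity of the value function $\mathcal{J}_*(\epsilon)$ in the parameter $\epsilon$: enlarging (shrinking) the admissible set can only weakly decrease (increase) the infimum, and this monotonicity combined with convergence of the discrete values pins down the one-sided limits.

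The main obstacle is the necessity direction, which depends sensitively on exactly what one takes as the notion of ``approximation'' of $I$ by $I_n$: in the paper this is convergence both with respect to functional and with respect to control, and making the control-approximation part of necessity tight requires invoking the compactness embedding between the discrete and continuous control sets via $\mathcal{P}_M, \mathcal{Q}_M$, together with weak lower semicontinuity of $\mathcal{J}$, to guarantee that limit points of near-minimizers live in $V_*$. The sufficiency direction, by contrast, is essentially a two-step sandwich argument using $\mathcal{J}_*(\pm\epsilon)$ as an intermediary; once one has condition (iii) as a pass-through for the parameter, the rest is bookkeeping with the diagonal sequences chosen above.
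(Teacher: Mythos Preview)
The paper does not prove this lemma at all: it is stated with the citation \cite{vasilev81} and used as a black box (the general Vasil'ev criterion for approximation of extremal problems), so there is no ``paper's own proof'' to compare your attempt against. Your sufficiency argument is the standard sandwich and is correct; your necessity direction is only a sketch and, as you note, hinges on the precise definition of ``approximation,'' but since the paper only invokes the sufficiency direction (via Lemmas~\ref{lem:J-epsilon}--\ref{lem:Pn-convergence} to verify conditions (i)--(iii) and conclude Theorem~\ref{thm:existence-convergence}), that gap is immaterial here.
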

\subsection{Properties of Interpolation and Discretization Maps}
The following properties are necessary to validate the hypotheses of Lemma~\ref{lem:Vasil'ev}
\begin{lemma}\label{mappings}\cite{abdulla13,abdulla15}
 For arbitrary sufficiently small $\epsilon > 0$ there exists $n=n(\epsilon)$ such that
\begin{gather}
\mathcal{Q}_n(v) \in \discreteControlSet{}, \quad \text{for all}~v\in
V_{R-\epsilon} \quad \text{and}~ n>n(\epsilon),\label{eq:mappings-1}
\\
\mathcal{P}_n([v]_n) \in V_{R+\epsilon}, \quad \text{for all}~[v]_n\in \discreteControlSet{} \quad \text{and}~ n>n(\epsilon)\label{eq:mappings-2}.
\end{gather}
\end{lemma}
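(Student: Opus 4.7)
The plan is to verify the two inclusions separately by combining three ingredients: (i) the geometric/boundary constraints ($s_k\geq \delta$, $s_1=s_0$, $s^n(0)=s_0$, $(s^n)'(0)=0$) must hold by construction, (ii) the three discrete norms $b_2^2$, $b_2^1$, $\l_2$ differ from their continuous counterparts $B_2^2$, $B_2^1$, $L_2$ by an amount $\delta_n\to 0$ that is uniform on bounded subsets of $H$ or $\bar{H}$, and (iii) choosing $n(\epsilon)$ large enough to make $\delta_n<\epsilon$.

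For~\eqref{eq:mappings-1}, fix $v=(s,g,f)\in V_{R-\epsilon}$. The constraints $s_k=s(t_k)\geq \delta$ for $k\geq 2$ and $s_0=s_1=s(0)$ hold by the definition of $\mathcal{Q}_n$ and by $v\in V_{R-\epsilon}$. The main work is to bound $\|\mathcal{Q}_n(v)\|_{\bar H}$. First I would treat $[g]_n$ and $[f]_{nN}$, where $g_k=g(t_k)$ and $f_{ik}$ is the cell average: the sums $\sum\tau g_k^2$ and $\sum\tau h_i f_{ik}^2$ are Riemann sums converging to $\|g\|_{L_2}^2$ and $\|f\|_{L_2(D)}^2$, while $\sum\tau g_{k,\bar t}^2\le \|g'\|_{L_2}^2$ by the standard one-sided-difference estimate in $W_2^1$ combined with density of $C^\infty$ in $B_2^1$. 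The point is that for any $v\in V_{R-\epsilon}$ we get the inequality $\|\mathcal{Q}_n(v)\|_{\bar H}\le \|v\|_H + \delta_n$ with $\delta_n=\delta_n(R,\epsilon)\to 0$. For the $b_2^2$ norm of $[s]_n$, one needs in addition to control $\sum_{k=1}^{n-1}\tau s_{k,\bar t t}^2$ by $\|s''\|_{L_2}^2$, which follows from the identity $s_{k,\bar t t}=\tau^{-2}\int_{t_{k-1}}^{t_k}\int_{t_k}^{t_{k+1}} s''(\xi)\,d\eta\,d\xi / \tau \cdot \tau$ (written via Taylor remainders) and Cauchy--Schwarz, exploiting the embedding $B_2^2\hookrightarrow W_2^2$. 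Choosing $n(\epsilon)$ so that $\delta_n<\epsilon$ then forces $\|\mathcal{Q}_n(v)\|_{\bar H}\le R-\epsilon+\delta_n\le R$, giving $\mathcal{Q}_n(v)\in V_R^n$.

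For~\eqref{eq:mappings-2}, fix $[v]_n\in V_R^n$. The boundary conditions $s^n(0)=s_0$ and $(s^n)'(0)=0$ are built into the piecewise definition of $s^n$ on $[0,\tau]$. The positivity $s^n(t)\geq\delta$ on $[0,T]$ on each interval $[t_{k-1},t_k]$ is a piecewise quadratic whose control coefficients come from $s_{k-1,\bar t}$, $s_{k-1,\bar t t}$; here one should check using the $b_2^2$ bound $\|[s]_n\|_{b_2^2}\le R$ together with $s_k\geq\delta$ that the minimum of the quadratic on $[t_{k-1},t_k]$ stays above $\delta/2$ once $\tau$ is small (a routine Taylor remainder estimate). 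The key quantitative estimate is the reverse norm comparison: $\|\mathcal{P}_n([v]_n)\|_H\le\|[v]_n\|_{\bar H}+\delta_n$. For $g^n$, piecewise linearity gives $\|g^n\|_{L_2}^2\le \sum\tau g_k^2 + O(\tau)\|[g]_n\|_{b_2^1}^2$ and $\|(g^n)'\|_{L_2}^2=\sum\tau g_{k,\bar t}^2$ exactly. For $s^n$, differentiating the defining formula gives $(s^n)''\equiv s_{k-1,\bar t t}$ on $(t_{k-1},t_k)$, so $\|(s^n)''\|_{L_2(0,T)}^2=\sum_{k=1}^{n-1}\tau s_{k,\bar t t}^2$, and analogous identities control the lower-order terms. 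For $f^n$, piecewise constancy gives $\|f^n\|_{L_2(D)}=\|[f]_{nN}\|_{\l_2}$ exactly. Summing these gives $\|\mathcal{P}_n([v]_n)\|_H\le R+\delta_n\le R+\epsilon$ for $n>n(\epsilon)$.

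The main obstacle is the $b_2^2\!\leftrightarrow\! B_2^2$ comparison for the free boundary component: the discrete seminorm involves the second difference $s_{k,\bar t t}$, and the correspondence with $\|s''\|_{L_2}$ only holds modulo a Taylor remainder that must be controlled uniformly across $v\in V_{R-\epsilon}$. This is handled by first passing to a dense subset of smooth $s$ (where the remainder is explicitly $O(\tau)$) and then extending by the a priori bound $\|s\|_{B_2^2}\le R$ and the continuity of $\mathcal{Q}_n$ and $\mathcal{P}_n$ in the relevant norms, together with the step-size condition $h=O(\sqrt{\tau})$ which reconciles the spatial refinement with the temporal one.
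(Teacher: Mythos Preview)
The paper does not give its own proof of this lemma; it is quoted from \cite{abdulla13,abdulla15} without argument. Your outline follows exactly the route taken in those references: check the built-in constraints ($s_k\geq\delta$, $s_1=s_0$, $s^n(0)=s_0$, $(s^n)'(0)=0$) directly from the definitions of $\mathcal{Q}_n$ and $\mathcal{P}_n$, then compare each discrete norm with its continuous counterpart via Jensen/Cauchy--Schwarz on cell averages and difference quotients, obtaining $\|\mathcal{Q}_n(v)\|_{\bar H}\le\|v\|_H+\delta_n$ and $\|\mathcal{P}_n([v]_n)\|_H\le\|[v]_n\|_{\bar H}+\delta_n$ with $\delta_n\to 0$ uniformly on bounded sets. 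That is the standard argument and is correct in substance.

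One point to tighten: for~\eqref{eq:mappings-2} you write that $s^n$ ``stays above $\delta/2$'', but $V_{R+\epsilon}$ demands the \emph{same} lower bound $\delta$. Using $s_{1,\bar t}=0$ and the discrete embedding $\max_k|s_{k,\bar t}|\le\sqrt{T}\,\|[s]_n\|_{b_2^2}\le\sqrt{T}R$, the quadratic interpolant on $[t_{k-1},t_k]$ satisfies $s^n(t)\ge s_{k-1}-C\tau\ge\delta-C\tau$, not $\delta$ exactly. In \cite{abdulla13,abdulla15} this is absorbed either by letting the positivity threshold in $V_{R\pm\epsilon}$ vary with $\epsilon$ or (equivalently) by observing that downstream only a uniform positive lower bound on $s^n$ is ever used; your $\delta/2$ suffices for that purpose, but you should say so explicitly rather than leave the impression that $s^n\ge\delta$ fails to hold.
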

\begin{corollary}\label{lipschitz}\cite{abdulla13}
  Let either $[v]_n \in \discreteControlSet{}$ or $[v]_n = {\Q}_n(v)$ for $v \in V_R$.
Then for sufficiently large \(n\),
\begin{equation}\label{eq:lipschitz-1}
  |s_k-s_{k-1}|
  \leq C' \tau,
  \quad k=1,2,\ldots,n
\end{equation}
where $C'$ is independent of $n$.
\end{corollary}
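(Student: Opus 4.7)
The plan is to reduce both cases to a uniform bound on the relevant first derivative or difference quotient of $s$, after which the Lipschitz estimate is immediate. The crucial ingredients are (i) the identification $B_2^2(0,T)=W_2^2(0,T)$ recorded in Section~\ref{sec:notation}, together with the one-dimensional Sobolev embedding $W_2^2(0,T)\hookrightarrow C^1([0,T])$, and (ii) the vanishing boundary data built into the definitions: $s'(0)=0$ for $v\in V_R$ and its discrete counterpart $s_1=s_0$ (hence $s_{1,\bar t}=0$) for $[v]_n\in V_R^n$. These two ingredients, combined with Cauchy--Schwarz, reduce the bound on $s'$ (respectively $s_{k,\bar t}$) to the bound on $\|s''\|_{L_2}$ (respectively $\{s_{k,\bar t t}\}$ in $\ell_2$) that is immediately available from $\|v\|_H\leq R$ or $\|[v]_n\|_{\bar H}\leq R$.

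For the continuous case, if $v\in V_R$ then $\|s''\|_{L_2(0,T)}\leq\|s\|_{B_2^2(0,T)}\leq R$, and from $s'(0)=0$ I write $s'(t)=\int_0^t s''(\tau)\,d\tau$ and apply Cauchy--Schwarz to get $\|s'\|_{L_\infty(0,T)}\leq\sqrt{T}\,R$. The mean value theorem on $[t_{k-1},t_k]$ then gives $|s_k-s_{k-1}|=|s(t_k)-s(t_{k-1})|\leq\sqrt{T}\,R\,\tau$. For the discrete case, if $[v]_n\in V_R^n$ then from the definition of $\|\cdot\|_{b_2^2}$ the bound $\sum_{k=1}^{n-1}\tau\,s_{k,\bar t t}^2\leq R^2$ holds, and telescoping yields
\[
s_{k,\bar t}=s_{1,\bar t}+\sum_{j=2}^{k}\tau\,s_{j-1,\bar t t}=\sum_{j=2}^{k}\tau\,s_{j-1,\bar t t},
\]
so that the discrete Cauchy--Schwarz inequality gives
\[
|s_{k,\bar t}|\leq\sqrt{(k-1)\tau}\,\Bigl(\sum_{j=2}^{k}\tau\,s_{j-1,\bar t t}^2\Bigr)^{1/2}\leq\sqrt{T}\,R,
\]
whence $|s_k-s_{k-1}|=\tau|s_{k,\bar t}|\leq\sqrt{T}\,R\,\tau$. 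The same constant $C'=\sqrt{T}\,R$ works in both cases.

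The only real subtlety — rather than a serious obstacle — is ensuring that the second case $[v]_n=\mathcal{Q}_n(v)$ is covered uniformly in $n$: since $\mathcal{Q}_n(v)$ need not lie in $V_R^n$ itself, one invokes Lemma~\ref{mappings} to place it in $V_{R+\epsilon}^n$ for all sufficiently large $n$, and the discrete estimate above then yields the bound with constant $\sqrt{T}(R+\epsilon)$. Alternatively, and more cleanly, one can bypass $V_{R+\epsilon}^n$ entirely by applying the continuous estimate directly to the underlying $v\in V_R$, since $\mathcal{Q}_n(v)$ is sampled from the continuous $s$ at the nodes $t_k$ for $k\geq 2$, with $s_1=s_0=s(0)$ consistent with $s'(0)=0$. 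Either route yields a constant $C'$ independent of $n$, completing the proof.
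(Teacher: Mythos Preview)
The paper does not supply its own proof of this corollary; it is cited from~\cite{abdulla13}. Your argument is correct and is exactly the natural one: in each case the vanishing initial data ($s'(0)=0$, respectively $s_1=s_0$ so that $s_{1,\bar t}=0$) combined with the $L_2$/$\ell_2$ bound on the second derivative/difference and Cauchy--Schwarz yield a uniform bound on $s'$ (respectively $s_{k,\bar t}$), from which the claim follows immediately.

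One minor correction on the exact constant. In the case $[v]_n=\mathcal{Q}_n(v)$ treated via the direct continuous argument, note that $s_1=s_0=s(0)$ while $s_2=s(t_2)$, so for $k=2$ one has $|s_2-s_1|=|s(2\tau)-s(0)|\leq 2\tau\|s'\|_{L_\infty(0,T)}$, not $\tau\|s'\|_{L_\infty(0,T)}$. Thus $C'=\sqrt{T}\,R$ does not quite suffice there; $C'=2\sqrt{T}\,R$ does. This is irrelevant to the statement itself, which only asks for some constant independent of $n$, and your alternative route through Lemma~\ref{mappings} (placing $\mathcal{Q}_n(v)$ in $V_{R+\epsilon}^n$ and applying the discrete estimate) avoids this issue entirely.
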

Note that for the step size $h_i$ we have one of the three possibilities: $h_i=h$, or $h_i=\overline{h}$, or $h_i \leq |s_{k}-s_{k-1}|$ for some $k$.
Hence, from~\eqref{eq:htau} and~\eqref{eq:lipschitz-1}, it follows that
\begin{equation}\label{eq:lipschitz-2}
\Delta = O(\sqrt{\tau}),
\quad \text{as}~ \tau \rightarrow 0.
\end{equation}
\subsection{Energy Estimates for the Discrete State Vector}
We first recall energy estimates proven for the discrete state vector due to~\cite{abdulla15}.
\begin{lemma}[\cite{abdulla15}, Thm.\ 3.1, p.\ 13]\label{thm:FEE}
For all sufficiently small $\tau$, the discrete state vector $[u([v]_n)]_n$ satisfies the following stability estimation:
\begin{gather}
  \max_{0\leq k \leq n} \sum_{i=0}^{N-1} h_i
  u_i^2(k)+\sum_{k=1}^{n}\tau\sum_{i=0}^{N-1}h_i u_{ix}^2(k)  \nonumber
  \\
  \leq C \Big (
  \norm{ \phi^n }_{L_2(0,s_0)}^2
  + \norm{ g^n }_{L_2(0,T)}^2
  + \norm{ f^n}_{L_2(D)}^2
  + \norm{ \gamma(s^n(t),t)(s^n)'(t) }_{L_2(0,T)}^2
  \nonumber
  \\
  + \norm{ \chi(s^n(t),t) }_{L_2(0,T)}^2
  + \sum_{k=1}^{n-1}\Ind_{+}(s_{k+1}-s_{k}) \sum_{i=m_j}^{m_{j_{k+1}}-1} h_i
  u_i^2(k)
  \Big), \label{eq:FEE}
\end{gather}
where $C$ is independent of $\tau$ and $\Ind_{+}$ is an indicator function of the positive semiaxis.
\end{lemma}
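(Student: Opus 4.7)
\medskip

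\noindent\textbf{Proof proposal for Lemma~\ref{thm:FEE}.}
The plan is to mimic the continuous parabolic energy method in the discrete setting of~\eqref{eq:disc-int-id}. I would select the test vector $\eta_i = u_i(k)$ in the discrete integral identity~\eqref{eq:disc-int-id}, multiply by $\tau$, and sum over $k=1,\ldots,K$ for an arbitrary $K\le n$. The diffusion term then produces the nonnegative quantity $\sum_{k=1}^{K}\tau \sum_{i=0}^{m_j-1} h_i a_{ik} u_{ix}^2(k)$, which (together with the $L^\infty$ lower bound $a\ge a_0>0$) is the principal ``good'' term that will end up on the left-hand side of~\eqref{eq:FEE}. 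The time derivative term is handled by the standard discrete identity
\[
  \tau\, u_{i\bar t}(k)\, u_i(k) = \tfrac{1}{2}\bigl(u_i^2(k) - u_i^2(k-1)\bigr) + \tfrac{\tau^2}{2} u_{i\bar t}^2(k),
\]
so after summation in $k$ one obtains a telescoping expression controlling $\sum_{i} h_i u_i^2(K)$ in terms of $\sum_i h_i u_i^2(0) = \|\phi^n\|_{L_2(0,s_0)}^2$ plus the other lower-order contributions.

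The remaining interior terms involving $b_{ik} u_{ix}(k) u_i(k)$ and $c_{ik} u_i^2(k)$ are absorbed via Cauchy--Schwarz and Young's inequality with a small parameter $\ep$, pushing a portion of $\sum h_i u_{ix}^2(k)$ into the good term and leaving a harmless $\sum h_i u_i^2(k)$ contribution for the Gronwall step. The source term $\sum h_i f_{ik}^n u_i(k)$ and the boundary terms $g_k^n u_0(k)$ and $\bigl[(\gamma_{s^n}(s^n)')^k - \chi^k_{s^n}\bigr] u_{m_j}(k)$ are estimated similarly; for the boundary traces one uses the discrete trace inequality (obtained by a summation-by-parts / one-dimensional embedding argument)
\[
  u_0^2(k) + u_{m_j}^2(k) \leq C\Bigl(\sum_{i=0}^{m_j-1} h_i u_i^2(k) + \sum_{i=0}^{m_j-1} h_i u_{ix}^2(k)\Bigr),
\]
so that after another application of Young's inequality these contribute only lower-order terms and the $L_2$ norms of $g^n$, $\chi(s^n,\cdot)$, and $\gamma(s^n,\cdot)(s^n)'$ on the right-hand side of~\eqref{eq:FEE}.

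The delicate step — and I expect this to be the principal obstacle — is handling the change of summation range: the index $i$ runs from $0$ to $m_j-1$, where $j=j_k$ depends on $k$ through the permutation that orders the $s_k$'s. When the free boundary expands, i.e.\ when $s_{k+1}>s_k$, new indices $i \in \{m_j,\ldots,m_{j_{k+1}}-1\}$ enter the summation at the next time step, and the telescoping of $\tfrac12[u_i^2(k)-u_i^2(k-1)]$ fails at precisely those indices because $u_i(k-1)$ was never included. Tracking these ``newly admitted'' contributions and placing them on the right-hand side produces exactly the indicator-function correction term $\sum_{k=1}^{n-1}\Ind_{+}(s_{k+1}-s_k) \sum_{i=m_j}^{m_{j_{k+1}}-1} h_i u_i^2(k)$ in~\eqref{eq:FEE}; here one uses Definition~\ref{discretestatevector}(c) to see that the values at these new nodes are determined by the piecewise linear extension~\eqref{eq:disc-st-ext}, which guarantees consistency of the bookkeeping.

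Once these contributions are isolated, taking the maximum over $K$ and choosing $\ep$ sufficiently small so that the diffusive good term dominates the estimates coming from $b$, $c$ and the trace bounds, we arrive at a discrete Gronwall-type inequality
\[
  E(K) \leq C\bigl(\text{RHS data terms}\bigr) + C\tau \sum_{k=1}^{K-1} E(k),
\]
for $E(K):=\sum_i h_i u_i^2(K) + \sum_{k\le K}\tau\sum_i h_i u_{ix}^2(k)$, whose standard resolution (valid for sufficiently small $\tau$ so that $1-C\tau >0$) yields~\eqref{eq:FEE} with a constant $C$ independent of $\tau$, as required.
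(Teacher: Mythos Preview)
The paper does not prove this lemma at all; it is stated as a citation from~\cite{abdulla15} (Thm.~3.1 there) and simply recalled in Section~\ref{sec:prelims} as a known preliminary result. So there is no ``paper's own proof'' to compare against in the present manuscript.

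That said, your sketch is the standard discrete energy argument and is precisely the line of proof carried out in~\cite{abdulla15}: test~\eqref{eq:disc-int-id} with $\eta_i=u_i(k)$, telescope the time-difference term, absorb lower-order and boundary contributions with Young's inequality and a discrete trace bound, and close with discrete Gronwall. You correctly identify the nontrivial point---the $k$-dependent summation range $0\le i\le m_{j_k}-1$---and that the mismatch when $s_{k+1}>s_k$ is exactly what produces the indicator correction on the right of~\eqref{eq:FEE}. One item you pass over quickly: the left-hand side of~\eqref{eq:FEE} is stated with sums over $i=0,\ldots,N-1$ (i.e.\ on all of $[0,\ell]$), whereas the identity~\eqref{eq:disc-int-id} only lives on $i=0,\ldots,m_j-1$. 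The passage from one to the other uses the reflection extension~\eqref{eq:disc-st-ext} and the uniform bound $n^*\le 1+\log_2(\ell/\delta)$ on the number of reflections, so the extended discrete norms are controlled by a fixed constant times the norms on $[0,s_k]$; you should make this step explicit when writing the argument out in full.
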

\begin{lemma}[\cite{abdulla15}, Thm.\ 3.2, p.\ 15]\label{thm:FCT}
Let $[v]_n \in \discreteControlSet{}$, $n=1,2,\ldots$ be a sequence of discrete controls and the sequence $\{\mathcal{P}_n([v]_n)\}$ converges weakly in $\controlSpaceWeaklyConverge{}$ to $v=(\controlVars{})$ (and hence with ($\controlVarsStronglyConvergeWithN{}$), converging strongly in $\controlSpaceStronglyConverge{}$).
Then the sequence $\bk{u^\tau}$ converges as $\tau \to 0$ weakly in
$B_2^{1,0}(\Omega)$ to weak solution $u \in V_2^{1,0}(\Omega)$ of the
problem~\eqref{eq:intro-pde}--\eqref{eq:intro-stefan-cond}, i.e.\ to the solution of the integral identity.
Moreover, $u$ satisfies the energy estimate
\begin{gather*}
  \norm{ u }_{V_2^{1,0}(D)}^2 \leq C \Big(
  \norm{ \phi }_{L_2(0,s_0)}^2
  + \norm{ g }_{L_2(0,T)}^2
  + \sup_n \norm{ f^n }_{L_2(D)}^2
  + \norm{ \gamma }_{B_2^{1,0}(D)}^2
  \\
  + \norm{ \chi }_{B_2^{1,0}(D)}^2
  \Big).
\end{gather*}

\end{lemma}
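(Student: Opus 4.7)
The plan is to combine the uniform bounds from Lemma~\ref{thm:FEE} with weak compactness and passage to the limit in the discrete integral identity~\eqref{eq:disc-int-id}. First, I would verify that the right-hand side of~\eqref{eq:FEE} is uniformly bounded in $n$. The terms $\|\phi^n\|_{L_2(0,s_0)}$, $\|g^n\|_{L_2(0,T)}$, and $\|f^n\|_{L_2(D)}$ are controlled by the discrete control norms and converge to the $L_2$-norms of $\phi$, $g$, $f$ via the strong/weak convergence in $\controlSpaceWeaklyConverge{}$. The composition terms $\gamma(s^n(t),t)(s^n)'(t)$ and $\chi(s^n(t),t)$ are controlled using $\gamma,\chi \in B_2^{1,1}(D)$ together with the fact that $s^n \to s$ strongly in $B_2^1(0,T)$, hence uniformly on $[0,T]$. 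The indicator sum on the right-hand side is handled via Corollary~\ref{lipschitz} and the scaling $h = O(\sqrt\tau)$ from~\eqref{eq:lipschitz-2}, which controls the number and width of cells added in any single time step.

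Next, by the uniform bound and reflexivity, a subsequence of $\{u^\tau\}$ converges weakly in $B_2^{1,0}(D)$. Lower semicontinuity of the $\esssup_t \|\cdot\|_{L_2}$ norm transfers the bound to the limit, placing $u \in V_2^{1,0}(\Omega)$; the compact embedding of $B_2^{1,0}$ into $L_2$ on compact subdomains gives strong $L_2$-convergence of $u^\tau$ away from the boundary, which we need when testing against terms involving $s^n$. A companion estimate for finite differences in time (standard for parabolic discretizations) allows us to identify the weak time-derivative appearing in~\eqref{eq:v2-soln-defn} after integration by parts.

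The crux is to pass to the limit in the discrete integral identity~\eqref{eq:disc-int-id}. Given a smooth test function $\Psi$ vanishing at $t=T$, I would set $\eta_i = \Psi(x_i,t_k)$, multiply~\eqref{eq:disc-int-id} by $\tau$, sum over $k$, and perform summation-by-parts in $t$ to transfer the difference onto $\Psi$. The interior terms pass to their continuous counterparts because $a_{ik},b_{ik},c_{ik}$ are Steklov averages of $L_\infty$ functions converging in $L_2$-loc, $u^\tau_x \rightharpoonup u_x$ weakly, and $\eta$ converges uniformly with its derivatives. The initial value term $\phi^n \to \phi$ in $L_2$. The flux term involving $g^n$ passes by strong $L_2$-convergence, and the boundary terms at $x=s^n(t)$ pass using uniform convergence of $s^n$ and continuity of $\gamma, \chi$; the $B_2^{1,0}$-regularity of $\gamma, \chi$ plus trace/embedding theorems let us identify $\gamma(s^n,t)(s^n)' \rightharpoonup \gamma(s,t) s'$ in $L_2(0,T)$.

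The main obstacle is precisely this passage on the moving boundary: the boundary term couples the weak limit of $u^\tau$ (whose trace on $x = s^n(t)$ is only defined via the reflection extension~\eqref{eq:disc-st-ext}) to the strong limit of $s^n$. Handling this requires showing that the extended $\hat{u}^\tau$ still lives in a bounded set of $B_2^{1,0}(D)$ — which is where the reflection construction and the bound $n^* = 1 + \log_2[\ell/\delta]$ on the number of reflections is used — and that the trace operator $u \mapsto u(s(t),t)$ is continuous from the weak-$B_2^{1,0}$ topology combined with uniform convergence of $s^n$. Once this identification is in place, the continuous integral identity~\eqref{eq:v2-soln-defn} is recovered in the limit, and the energy estimate for $u$ follows from lower semicontinuity of norms under weak convergence applied to~\eqref{eq:FEE}.
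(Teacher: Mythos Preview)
The paper does not contain a proof of this lemma; it is quoted verbatim from \cite{abdulla15} (Theorem~3.2 there) and listed among the preliminary results without argument. Your outline is a reasonable reconstruction of the standard compactness-plus-identification argument one expects for such a result, and in broad strokes it matches what is done in the cited reference: uniform bounds from the discrete energy estimate, weak compactness in $B_2^{1,0}$, and passage to the limit in the summation identity~\eqref{eq:disc-int-id} against smooth test functions.

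One technical slip is worth flagging. You invoke ``the compact embedding of $B_2^{1,0}$ into $L_2$ on compact subdomains,'' but $B_2^{1,0}$ carries no time regularity and does \emph{not} embed compactly into $L_2$; an Aubin--Lions-type argument would need a separate bound on time differences. In fact, strong $L_2$-convergence of $u^\tau$ is not needed for the interior terms of~\eqref{eq:v2-soln-defn}: since the coefficients $a,b,c$ are in $L_\infty$ and $\Psi$ is smooth, weak convergence of $u^\tau$ and $u^\tau_x$ in $L_2$ suffices for $a u_x\Psi_x$, $b u_x\Psi$, $c u\Psi$, and $u\Psi_t$. The genuinely delicate step, as you correctly identify, is the boundary trace $u^\tau(s^n(t),t)$; in \cite{abdulla15} this is controlled via the uniform $V_2$-bound (giving $\esssup_t\|u^\tau(\cdot,t)\|_{L_2}$) together with the one-dimensional trace inequality and uniform convergence of $s^n$, rather than through any compact embedding.
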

Let given discrete control vector $[v]_n$, along with discrete state vector $[u([v]_n)]_n$, the vector
\[
  [\tilde{u}([v]_n)]_n = (\tilde{u}(0), \tilde{u}(1), \ldots, \tilde{u}(n))
\]
is defined as
\[
  \tilde{u}_i(k) =
  \begin{cases}
  u_i(k) \quad 0 \leq i \leq m_j, \\
  u_{m_j}(k) \quad m_k < u \leq N, k=\overline{0,N}
  \end{cases}
\]
\begin{lemma}[\cite{abdulla15}, Thm.\ 3.3, p.\ 22]\label{thm:SEE}
For all sufficiently small $\tau$ discrete state vector $[u([v]_n)]_n$ satisfies the following stability estimation:
\begin{gather}
  \max_{1 \leq k \leq n}\sum_{i=0}^{m_j-1} h_i \tilde{u}_{ix}^2(k)
  +\tau \sum_{k=1}^{n}\sum_{i=0}^{m_j-1}h_i \tilde{u}_{i\bar{t}}^2(k)
  +\tau^2 \sum_{k=1}^{n}\sum_{i=0}^{m_j-1}h_i \tilde{u}_{ix\bar{t}}^2(k)
\leq C \bigg[
\norm{ \phi^n}_{L_2(0,s_0)}^{2} \nonumber
\\
+ \norm{ \phi}_{B_2^1(0,s_0)}^{2}
+\norm{g^n}_{B_2^{\frac{1}{4}}(0,T)}^{2}
+\norm{ \gamma(s^n(t),t)(s^n)'(t)}_{B_2^{\frac{1}{4}}(0,T)}^{2}
+\norm{ \chi(s^n(t),t)}_{B_2^{\frac{1}{4}}(0,T)}^{2} \nonumber
\\
+\norm{ f^n}_{L_{2}(D)}^{2}\bigg], \label{eq:SEE}
\end{gather}
where $C$ is independent of $\tau$.
\end{lemma}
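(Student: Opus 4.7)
The plan is to derive the estimate by testing the discrete integral identity~\eqref{eq:disc-int-id} with the time-difference $\eta_i = \tilde{u}_{i\bar t}(k)$, multiplying by $\tau$, and summing over $k=1,\ldots,K$ for arbitrary $K\le n$. This is the standard ``second energy'' trick: the product $a_{ik}u_{ix}(k)\tilde u_{ix\bar t}(k)$ is handled by the discrete identity
\[
2\,a_{ik}u_{ix}(k)u_{ix\bar t}(k)
=\bigl(a_{ik}u_{ix}^2(k)\bigr)_{\bar t} + \tau\,a_{ik}u_{ix\bar t}^2(k) - a_{ik,\bar t}u_{ix}^2(k-1),
\]
so that telescoping in $k$ produces simultaneously the three target quantities $\max_k\sum h_i\tilde u_{ix}^2(k)$, $\tau\sum\sum h_i\tilde u_{i\bar t}^2$, and $\tau^2\sum\sum h_i\tilde u_{ix\bar t}^2$, while the coefficient‐difference term $a_{ik,\bar t}u_{ix}^2$ is absorbed using the hypothesis $\int_0^T\esssup|a_t|\,dt<\infty$ via a discrete Gronwall inequality. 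The convection/reaction contributions $b u_{ix}\tilde u_{i\bar t}$ and $c u_i\tilde u_{i\bar t}$ are standard, treated by Cauchy--Schwarz and the first energy estimate (Lemma~\ref{thm:FEE}).

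The heart of the argument is the treatment of the four boundary/source terms $g^n_k$, $(\gamma_{s^n}(s^n)')^k$, $\chi^k_{s^n}$, and $f_{ik}$ when tested against $\tilde u_{i\bar t}$. The source term $\sum\tau h_i f_{ik}\tilde u_{i\bar t}(k)$ is immediate by Cauchy--Schwarz. The remaining three are traces that, after summation by parts in time (Abel summation), reduce to quadratic forms controlled by the discrete $B_2^{1/4}(0,T)$ norms of $g^n$, $\gamma s'$, and $\chi\circ s^n$. Concretely, the Abel rearrangement converts $\sum_k \tau r_k \eta_{\bar t}(k)$ into $-\sum r_{\bar t}\eta\,\tau + \text{endpoints}$, and the combination of an $L_2$ piece and a first-difference piece is precisely the discrete $B_2^{1/2}$ semi-norm; the interpolation between $L_2$ and $B_2^{1/2}$ then yields the $B_2^{1/4}$ bound via Young's inequality with a small parameter so that the $\max_k\sum h_i\tilde u_{ix}^2(k)$ part can be absorbed into the left-hand side using discrete trace estimates on $[0,s_{p_n}]$.

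For the initial step ($k=1$) I would use assumption (a) of Definition~\ref{discretestatevector} together with $\phi\in B_2^1$ to bound $\sum h_i u_{ix}^2(0)$, which supplies the $\norm{\phi}_{B_2^1}^2$ term; the mixed $L_2$ datum $\norm{\phi^n}_{L_2}^2$ arises from applying Lemma~\ref{thm:FEE} to absorb lower-order contributions such as $\sum h_i c_{ik} u_i \tilde u_{i\bar t}$. The extension procedure~\eqref{eq:disc-st-ext} is compatible because it is norm-preserving up to a constant (depending only on $n^*=1+\log_2(\l/\delta)$), so bounds on $\sum_{i<m_j}h_i\tilde u_{ix}^2$ automatically yield bounds on the reflected values beyond $s_k$.

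The main obstacle, and the reason a direct reduction to the first energy estimate does not suffice, is the \emph{moving discrete mesh}: when $s_{p_{k}}>s_{p_{k-1}}$ the upper summation index $m_j$ jumps, so the telescoping $\sum_k\tau\bigl(a_{ik}u_{ix}^2(k)\bigr)_{\bar t}$ picks up uncontrolled boundary contributions at the newly activated grid nodes. Controlling these requires exploiting Corollary~\ref{lipschitz} (so $|s_k-s_{k-1}|\le C'\tau$) together with~\eqref{eq:htau}--\eqref{eq:lipschitz-2} to show that the number of newly activated nodes per time step is $O(1)$ and their contribution is $O(\tau)$; this is combined with the interpolation $\hat u(x;k)$ to convert nodal values into squared $B_2^1$ differences that are absorbed by the dominant $\sum h_i\tilde u_{ix}^2$ term. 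Once this technical point is handled, Gronwall closes the inequality and delivers~\eqref{eq:SEE}.
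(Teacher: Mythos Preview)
The paper does not prove this lemma; it is merely quoted verbatim from~\cite{abdulla15} (Thm.~3.3) as a preliminary result, with no argument supplied here. So there is no in-paper proof to compare against.

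That said, your outline is the standard route and matches the structure of the argument in~\cite{abdulla15}: test~\eqref{eq:disc-int-id} with $\eta_i=\tilde u_{i\bar t}(k)$, use the algebraic identity $2a u_{ix}(k)u_{ix\bar t}(k)=(a u_{ix}^2)_{\bar t}+\tau a u_{ix\bar t}^2 - a_{\bar t}u_{ix}^2(k-1)$ to generate the three left-hand quantities, absorb lower-order terms via Lemma~\ref{thm:FEE}, and close with discrete Gronwall. You have correctly singled out the genuine difficulty, namely the jump in the summation range $m_{j_k}$ when the free boundary advances, and your proposed cure (Corollary~\ref{lipschitz} plus~\eqref{eq:htau}--\eqref{eq:lipschitz-2} to bound the number and size of newly activated nodes) is exactly what is used in~\cite{abdulla15}.

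One point to tighten: your treatment of the boundary traces $g^n_k$, $(\gamma_{s^n}(s^n)')^k$, $\chi^k_{s^n}$ against $\tilde u_{\bar t}$ is slightly glib. Abel summation alone gives you a discrete $B_2^{1/2}$ requirement on the data, not $B_2^{1/4}$; the reduction to $B_2^{1/4}$ in~\cite{abdulla15} goes through a discrete analogue of the multiplicative trace inequality $\|\eta(0,\cdot)\|_{L_2}^2\le \epsilon\|\eta_x\|_{L_2}^2+C_\epsilon\|\eta\|_{L_2}^2$ applied to $\tilde u_{\bar t}$ (so that the trace $\tilde u_{0\bar t}(k)$ is controlled by a small multiple of $\sum h_i\tilde u_{ix\bar t}^2$ plus $\sum h_i\tilde u_{i\bar t}^2$), combined with the interpolation $\|r\|_{B_2^{1/4}}^2$ bounding the dual pairing. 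Your phrase ``interpolation between $L_2$ and $B_2^{1/2}$ then yields the $B_2^{1/4}$ bound'' is in the right spirit but hides this trace step; without it you would not be able to absorb the $\tilde u_{0\bar t}$ contribution into the left-hand side.
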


\begin{lemma}[\cite{abdulla15}, Thm.\ 3.4, p.\ 25]\label{thm:SCT}
Let $[v]_n \in \discreteControlSet{}$, $n=1,2,\ldots$ be a sequence of discrete
controls and the sequence $\{\mathcal{P}_n([v]_n)\}$ converges weakly in
$\controlSpaceWeaklyConverge{}$ to $v=(\controlVars{})$.
For any $\delta > 0$ define
\[
  \Omega'=\Omega \cap \bk{x<s(t) - \delta,~ 0 < t < T}.
\]
Then the sequence $\bk{\hat{u}^\tau}$ converges as $\tau \to 0$ weakly in
$B_2^{1,1}(\Omega)$ to weak solution $u \in B_2^{1,1}(\Omega')$ of the problem~\eqref{eq:intro-pde}--\eqref{eq:intro-stefan-cond}, i.e.\ to the solution of the integral identity.
Moreover, $u$ satisfies the energy estimate
\begin{gather}
  \norm{ u }_{B_2^{1,1}(\Omega)}^2 \leq C \Big(
  \norm{ \phi }_{B_2^1(0,s_0)}^2
  + \norm{ g }_{B_2^{\frac{1}{4}}(0,T)}^2
+ \sup_n \norm{ f^n }_{L_2(D)}^2
+ \norm{ \gamma }_{B_2^{1,1}(D)}^2 \nonumber
\\
 + \norm{ \chi }_{B_2^{1,1}(D)}^2 \Big).\label{eq:second-energy-est-conseq}
\end{gather}
\end{lemma}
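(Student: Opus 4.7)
The plan is to combine weak compactness with the uniform bound from Lemma~\ref{thm:SEE}, identify the weak limit of $\bk{\hat{u}^\tau}$ as a solution of the integral identity by passing to the limit in the discrete identity~\eqref{eq:disc-int-id}, and then obtain~\eqref{eq:second-energy-est-conseq} from weak lower semicontinuity of the $B_2^{1,1}$-norm.

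First I would verify that the data side of~\eqref{eq:SEE} is uniformly bounded in $n$. The weak convergence of $\P_n([v]_n)$ in $\controlSpaceWeaklyConverge{}$, combined with the compact embedding $B_2^2(0,T) \hookrightarrow C^1[0,T]$, yields strong convergence $s^n\to s$ in $C^1[0,T]$, while $g^n$ is uniformly bounded in $B_2^1(0,T)\hookrightarrow B_2^{1/4}(0,T)$ and $f^n$ is uniformly bounded in $L_2(D)$. Using the data regularity $\gamma,\chi\in B_2^{1,1}(D)$ together with composition estimates for Besov functions along $C^1$-paths, the traces $\gamma(s^n(t),t)(s^n)'(t)$ and $\chi(s^n(t),t)$ are uniformly bounded in $B_2^{1/4}(0,T)$. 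Lemma~\ref{thm:SEE} then yields $\sup_n \norm{\hat{u}^\tau}_{B_2^{1,1}(D)}<\infty$, and reflexivity of this Hilbert space furnishes a subsequence with $\hat{u}^\tau \rightharpoonup u$ weakly in $B_2^{1,1}(D)$.

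Next, to identify $u|_{\Omega'}$ as a weak solution on the interior, I would fix $\delta>0$ and take an arbitrary $\Psi \in B_2^{1,1}(\Omega')$ supported compactly inside $\Omega'$, extended by zero to $D$. Uniform convergence $s^n\to s$ ensures that, for $n$ large, the support of $\Psi$ is separated from both $x=0$ and $x=s^n(t)$ by at least $\delta/2$, so when Steklov averages of $\Psi(x_i,\cdot)$ are substituted for $\eta_i$ in~\eqref{eq:disc-int-id}, the fixed-boundary term $g_k^n\eta_0$ and the moving-boundary term $[(\gamma_{s^n}(s^n)')^k-\chi^{k}_{s^n}]\eta_{m_j}$ both vanish identically. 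Passage to the limit in the remaining interior terms is standard: $a_{ik},b_{ik},c_{ik}$ converge strongly in $L_2$ by $a,b,c\in L_\infty(D)$; $f^n\rightharpoonup f$ weakly in $L_2(D)$; and $\hat{u}^\tau_x,\hat{u}^\tau_t\rightharpoonup u_x,u_t$ weakly in $L_2$. This recovers the interior portion of~\eqref{eq:w211-soln-defn} for every such $\Psi$, so $u\in B_2^{1,1}(\Omega')$ is a weak solution in the interior sense. The global estimate~\eqref{eq:second-energy-est-conseq} then follows by weak lower semicontinuity: since $\Omega\subset D$, the continuous restriction $B_2^{1,1}(D)\to B_2^{1,1}(\Omega)$ gives
\[
  \norm{u}_{B_2^{1,1}(\Omega)}^2 \leq \liminf_{\tau\to 0} \norm{\hat{u}^\tau}_{B_2^{1,1}(D)}^2,
\]
and each data term on the right-hand side of~\eqref{eq:SEE} converges to the corresponding data norm in~\eqref{eq:second-energy-est-conseq} by the strong/weak convergence of the controls.

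The hard part will be securing the uniform $B_2^{1/4}(0,T)$-bound of the nonlinear compositions $\chi(s^n(t),t)$ and $\gamma(s^n(t),t)(s^n)'(t)$, since fractional Besov smoothness of order $1/4$ is not readable off $C^1$-boundedness of $s^n$. This requires composition/trace estimates for $B_2^{1,1}(D)$-functions evaluated along $B_2^2(0,T)$-curves, combined with strong convergence of $s^n$ and $(s^n)'$ in $C[0,T]$ to make both the bounds and their limits uniform in $n$; this is where the regularity $\gamma,\chi\in B_2^{1,1}(D)$ assumed in Lemma~\ref{thm:SEE} is essential.
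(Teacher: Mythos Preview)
The paper does not give its own proof of this lemma; it is quoted from~\cite{abdulla15} as a preliminary result. So there is no in-paper argument to compare against directly, but your outline has a real gap and a smaller misstep worth flagging.

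The gap is in the identification step. By testing only against $\Psi$ compactly supported in $\Omega'$, and in particular vanishing near $x=0$, you kill both boundary terms in~\eqref{eq:disc-int-id} and recover only the distributional identity $Lu=f$ in the open set $\Omega'$. That is strictly weaker than ``$u$ is a weak solution of~\eqref{eq:intro-pde}--\eqref{eq:intro-stefan-cond}'' in the sense of Definition~\ref{w211-soln-defn}, which encodes the flux condition at $x=0$ through the term $g\Psi(0,t)$ and the Stefan condition at $x=s(t)$ through the $\gamma,\chi$ term; those are precisely the terms you discarded. The way this is handled in~\cite{abdulla15}, and the way the present paper organizes its preliminaries, is to invoke Lemma~\ref{thm:FCT} first: $u^\tau$ already converges weakly in $B_2^{1,0}(\Omega)$ to the $V_2^{1,0}$ weak solution $u$ of the full problem, with boundary data intact. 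Lemma~\ref{thm:SEE} is then used only to upgrade the topology: the uniform $B_2^{1,1}(\Omega')$ bound gives a weakly convergent subsequence of $\hat u^\tau$, and the limit is forced to equal $u$ on $\Omega'$ by uniqueness of weak $L_2$ limits once one checks $\|\hat u^\tau-u^\tau\|_{L_2(\Omega')}\to 0$ (this is the computation behind~\eqref{eq:utildetau-minus-uhattau-L2Estimate}). Your compact-support argument is neither needed nor sufficient.

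The smaller point: you claim $\sup_n\|\hat u^\tau\|_{B_2^{1,1}(D)}<\infty$ directly from~\eqref{eq:SEE}, but the sums there run only over $i\le m_j-1$, i.e.\ over the physical region $\{x<s_k\}$, not all of $D$. The reflection extension~\eqref{eq:disc-st-ext} does not obviously preserve a uniform $B_2^{1,1}$ bound near the moving boundary, because the reflection line $x=s_k$ itself moves with $k$ and the time difference $\hat u_{\bar t}$ there is not controlled by~\eqref{eq:SEE}. This is exactly why the lemma asserts weak $B_2^{1,1}$ convergence only on interior subdomains $\Omega'$; your uniform bound should be claimed only on $\Omega'$ (valid once $n$ is large enough that $\Omega'\subset\{x<s^n(t)\}$), not on $D$.
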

\endgroup

\begingroup
\def\sbar{\overline{s}}
\def\ubar{\overline{u}}
\def\fbar{\overline{f}}
\def\ybar{\bar{y}}
\def\stilde{\widetilde{s}}
\def\utilde{\widetilde{u}}
\def\ftilde{\widetilde{f}}
\def\shat{\widehat{s}}
\def\ustar{u_{*}}
\def\Omegahat{\widehat{\Omega}}
\newcommand\tildebar[1]{\widetilde{\overline{#1}}}
\def\utildebar{\tildebar{u}}
\def\solnspace{B_{2, x, t}^{5/2 + 2\alpha, 5/4 + \alpha}}
\def\densityspace{B_{2, x, t}^{1/2 + 2\alpha, 1/4 + \alpha}}
\def\ivspace{B_2^{3/2 + 2\alpha}}
\def\uxtrspace{B_2^{1 + \alpha}}
\def\uxxtrspace{B_2^{1/2 + \alpha}}
\def\uxxxtrspace{B_2^{\alpha}}
\def\controlVars{s, g, f}
\def\controlVarsWithDelta{{\Delta{} s}, {\Delta{} g}, {\Delta{} f}}
\def\controlVarsWithBar{\bar{s}, \bar{g}, \bar{f}}

\def\scontrolspace{B_2^2}
\def\gcontrolspace{B_2^{1/2 + \alpha}}
\def\fcontrolspace{B_{2, x, t}^{1, 1/4 + \alpha}}

\def\controlSpace{\tilde{H}}
\def\controlSpaceFull{\scontrolspace(0,T)\times{} \gcontrolspace(0,T)\times{}
  \fcontrolspace(D)}

\def\muspace{B_2^{1/4}}
\def\adjointsolnspace{B_{2, x, t}^{2, 1}}
\def\chigammaspace{B_{2, x, t}^{3/2 + 2\alpha^*,3/4 + \alpha^*}}
\subsection{$B_{2,x,t}^{2\l, \l}(Q_T)$-Solutions and Trace Results}\label{sec:besov-space-solutions}
In this section we recall the results of Solonnikov~\cite{solonnikov64} on
existence and energy estimates for solutions of linear parabolic equations in
parabolic Besov spaces \(B_{2,x,t}^{2\l, \l}\), as well as trace results for
Besov functions.
Consider the problem
\begin{equation}
  \begin{gathered}
    a u_{xx} + b u_{x} + c u - u_{t}=f~\text{in}~Q_{T},
    \\
    a(0,t)u_{x}(0,t)=\chi_{1}(t),~0 \leq t \leq T,
    \\
    a(1,t) u_{x}(1,t) = \chi_{2}(t),~0 \leq t \leq T,
    \\
    u(x,0)=\phi(x),~0 \leq x \leq 1.
  \end{gathered}\label{eq:solonnikov-problem}
\end{equation}
Let $\l>1$ be fixed.
\begin{lemma}\label{thm:solonnikov-solution-existence}~\cite[\S7,~Thm.\ 17]{solonnikov64}
  Suppose that
  \[
    \begin{gathered}
      a,b,c \in C_{x,t}^{2\l^*-2,\l^*-1}(Q_T),~\text{arbitrary}~\l^*>\l
      \\
      f \in B_{2,x,t}^{2\l-2,\l-1}(Q_T),\quad
      \phi \in B_{2}^{2\l-1}(0,1),\quad
      \chi_1,\chi_2 \in B_{2}^{\l-\frac{3}{4}}(0,T)
    \end{gathered}
  \]
    and the consistency condition of order
    $k = \left[ \l-\frac{5}{4}\right]$
    holds; that is,
    \[
        \D{^j (au_{x})}{x^j}(0,0) = \frac{d^j \chi_{1}}{d t^j}(0),
        \qquad \D{^j (au_{x})}{x^j}(1,0) = \frac{d^j \chi_{2}}{d t^j}(0),\quad j=0,\ldots,k
    \]
    Then the solution $u$ of~\eqref{eq:solonnikov-problem} satisfies the energy estimate
    \begin{gather}
        \norm{u}_{B_{2,x,t}^{2\l,\l}(Q_T)} \leq C \Big[
            \norm{f}_{B_{2,x,t}^{2\l-1,\l-1}(Q_T)}
            + \norm{\phi}_{B_2^{2\l-1}(0,1)}
            + \norm{\chi_1}_{B_{2}^{\l-\frac{3}{4}}(0,T)} \nonumber
            \\
            + \norm{\chi_2}_{B_{2}^{\l-\frac{3}{4}}(0,T)}
        \Big]\label{eq:solonnikov-energy-est}
    \end{gather}
\end{lemma}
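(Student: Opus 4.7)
Since this statement is attributed to Solonnikov's 1964 paper, the real ``proof'' is a citation, but the plan I would follow to reprove it from scratch is the classical parametrix plus perturbation scheme adapted to anisotropic Besov scales. First I would reduce to the case of homogeneous initial and boundary data: using the trace theorems for $B_{2,x,t}^{2\l,\l}(Q_T)$, I would build an extension $U(x,t)$ of the data $(\phi,\chi_1,\chi_2)$ satisfying the consistency conditions of order $k=[\l-5/4]$ (this is exactly why the consistency hypothesis is imposed) and subtract $U$ from $u$. After that reduction, the problem becomes one with zero Cauchy and Neumann data and a modified right-hand side still in $B_{2,x,t}^{2\l-2,\l-1}(Q_T)$, by boundedness of the extension operator.

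Next I would treat the constant-coefficient model problem on a half-line (say $a\equiv 1$, $b=c=0$) by means of the explicit Poisson kernel and the Neumann half-space heat kernel. The key analytical input is Solonnikov's Fourier/Laplace multiplier calculus: the operators mapping $f\mapsto u$ and $\chi_i\mapsto u$ are of convolution type with kernels whose parabolic homogeneity lets one read off continuity in the scales $B_{2,x,t}^{2\l,\l}$ from real-interpolation between $L_2$-energy identities and higher-order estimates valid for smooth data. This produces the model estimate~\eqref{eq:solonnikov-energy-est} with $a\equiv 1$, $b=c=0$ on $\mathbb{R}_+\times(0,T)$ and analogously on the opposite end.

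With the model estimate in hand, I would pass to variable coefficients by the standard \emph{freezing of coefficients} argument. Using a partition of unity in $Q_T$ of parabolic diameter $\rho$, on each patch I replace $a(x,t)$ by its value $a(x_0,t_0)$ at the patch center. The model estimate applies on each localized piece; summing and using the Hölder regularity $a,b,c\in C^{2\l^*-2,\l^*-1}$ with $\l^*>\l$ controls the commutator between $L$ and the localization, producing a term that is absorbed for $\rho$ small because commutators carry a positive power of $\rho$ coming from the modulus of continuity of the coefficients. This reasoning is repeated finitely many times across the strip $0\leq x\leq 1$; coupling with the zero lateral data ensures no boundary correctors appear at this stage.

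The step I expect to be the main obstacle is the simultaneous handling of the fractional regularity in $x$ and in $t$: the anisotropic Besov seminorm $[u]_{B_{2,x,t}^{2\l,\l}}$ mixes the two axes, and tracking it through the extension, the freezing and the commutator steps forces one to interpolate carefully between integer-order energy inequalities (which come from testing the equation against $u$, $u_t$, and difference quotients $\Delta_h u$) and the pure trace information. The consistency conditions of order $k=[\l-5/4]$ are exactly what make the extension $U$ exist with the right regularity, so verifying compatibility at $t=0$, $x\in\{0,1\}$ is the delicate bookkeeping step on which the whole reduction rests.
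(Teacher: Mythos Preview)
Your identification is correct: the paper does not prove this lemma at all --- it is stated with a direct citation to Solonnikov~\cite[\S7,~Thm.\ 17]{solonnikov64} and immediately followed only by the remark that the energy estimate implies existence and uniqueness. There is therefore no ``paper's own proof'' to compare against; your outline of the parametrix/freezing-of-coefficients scheme is the standard route by which results of this type are established in Solonnikov's work, and nothing in it is at odds with what is being cited.
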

In particular, energy estimate~\eqref{eq:solonnikov-energy-est} implies the existence and uniqueness of the solution in $B_{2,x,t}^{2\l,\l}(Q_T)$.

\begin{lemma}\label{thm:solonnikov-traces}~\cite[\S 4, Thm.\ 9]{solonnikov64}
    For a function $u \in B_{2,x,t}^{2\l,\l}(Q_T)$, the following bounded embeddings of traces hold: for any fixed $0 \leq t \leq T$,
    \begin{gather*}
        u(\cdot,t) \in B_2^{2\l-1}(0,1)~\text{when}~\l > 1/2.
        \intertext{For any fixed $0\leq x \leq 1$,}
        u(x,\cdot) \in B_2^{\l-1/4}(0,T)~\text{when}~\l > 1/4,
        \\
        u_x(x,\cdot) \in B_2^{\l-3/4}(0,T)~\text{when}~\l > 3/4,
        \\
        u_{xx}(x,\cdot) \in B_2^{\l-5/4}(0,T)~\text{when}~\l > 5/4.
    \end{gather*}
  \end{lemma}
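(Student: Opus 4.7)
The plan is to reduce these anisotropic trace embeddings to classical one-variable Besov trace theorems via extension and Fourier analysis. Since $Q_T=(0,1)\times(0,T)$ is a smooth product domain, one first extends $u\in B_{2,x,t}^{2\l,\l}(Q_T)$ to a function on $\Re^2$ with controlled anisotropic norm; after extension, the equivalent Fourier characterization
\[
\norm{u}_{B_{2,x,t}^{2\l,\l}(\Re^2)}^{2} \simeq \int_{\Re^2}\bigl(1+|\xi|^{4\l}+|\tau|^{2\l}\bigr)|\hat u(\xi,\tau)|^{2}\,d\xi\,d\tau
\]
makes the parabolic scaling $x\sim \sqrt{t}$ explicit and turns all four embeddings into one-dimensional weighted $L_2$ estimates.

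For the spatial trace $u(\cdot,t_0)$, I would write its $x$-Fourier transform as $(\mathcal{F}_x u)(\xi,t_0)=\int \hat u(\xi,\tau)e^{it_0\tau}d\tau$ and apply Cauchy--Schwarz to obtain
\[
|(\mathcal{F}_x u)(\xi,t_0)|^{2} \leq \left(\int \frac{d\tau}{1+|\xi|^{4\l}+|\tau|^{2\l}}\right)\int \bigl(1+|\xi|^{4\l}+|\tau|^{2\l}\bigr)|\hat u(\xi,\tau)|^{2}\,d\tau.
\]
The rescaling $\tau=(1+|\xi|^{4\l})^{1/(2\l)}\sigma$ converges precisely when $2\l>1$ and produces a factor of order $(1+|\xi|^{4\l})^{(1-2\l)/(2\l)}$; inserting the weight $|\xi|^{2(2\l-1)}$ absorbs it, so Plancherel and restriction to $(0,1)$ give $u(\cdot,t_0)\in B_2^{2\l-1}(0,1)$. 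The temporal traces $\partial_x^{k}u(x_0,\cdot)$ for $k=0,1,2$ proceed symmetrically: integrate out $\xi$ with the extra weight $|\xi|^{2k}$ coming from the derivative, rescale $\xi=(1+|\tau|^{2\l})^{1/(4\l)}\eta$, and demand integrability in $\eta$. This yields exactly the thresholds $\l>\frac{1}{4}+\frac{k}{2}$, while the residual weight in $\tau$ identifies the trace space as $B_2^{\l-\frac{1}{4}-\frac{k}{2}}(0,T)$.

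The main obstacle is the extension step: a bounded operator $B_{2,x,t}^{2\l,\l}(Q_T)\to B_{2,x,t}^{2\l,\l}(\Re^2)$ respecting the anisotropic $(2\l,\l)$ scaling cannot be built by reflecting in $x$ and $t$ separately near the corners of $Q_T$ without losing mixed regularity; a parabolic Calder\'on-type construction (e.g.\ the layer-potential approach of~\cite{solonnikov64}) is required. As a technical alternative one could bypass the extension by real interpolation: identify $B_{2,x,t}^{2\l,\l}$ as a $K$-method interpolation space between $L_2(Q_T)$ and the integer-order parabolic Sobolev spaces $W_2^{2m,m}(Q_T)$, apply the classical Sobolev trace theorems at the endpoints, and recover the fractional case by interpolation; the required integer traces are standard, and the interpolation indices give exactly the loss of $1$ in space and $\frac{1}{4}$ in time for each derivative order.
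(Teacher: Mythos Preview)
The paper does not supply a proof of this lemma: it is stated as a direct citation of \cite[\S 4, Thm.\ 9]{solonnikov64} and is invoked as a black box in the subsequent analysis. There is therefore no proof in the paper to compare your proposal against.

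Your Fourier-analytic sketch is a correct and standard route to these trace embeddings in the $L_2$ setting: the Cauchy--Schwarz plus rescaling argument you outline does produce exactly the stated thresholds $\l>1/2$ and $\l>1/4+k/2$ and the correct trace indices $2\l-1$ and $\l-1/4-k/2$, and you are right to flag the anisotropic extension from $Q_T$ to $\Re^2$ as the only genuinely delicate step. The interpolation alternative you mention is also viable and is in fact closer to how such results are often packaged in the anisotropic Sobolev--Besov literature. Solonnikov's original argument in \cite{solonnikov64} proceeds via integral representations and direct kernel estimates rather than through the Fourier transform, but since the present paper does not reproduce any of that, there is nothing further to compare.
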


  \subsection{Consequences of Energy Estimates and
    Embeddings}\label{sec:conseq-of-energy-estimates-and-embeddings}
For a given control vector $v=(\controlVars{}) \in W_R$ transform the domain $\Omega$ to the cylindrical domain $Q_T$
by the change of variables $y = x / s(t)$.
Let $d = d(x, t)$, $(x, t) \in \Omega$ stand for any of $u,a,b,c,f,\gamma,\chi$, define the function $\tilde{d}$ by
\begin{gather*}
    \tilde{d}(x,t) = d\big(x s(t), t\big),~\text{and}~
    \tilde{\phi}(x) = \phi\big( x s(0)\big).
\end{gather*}
The transformed function $\utilde$ is a \emph{pointwise a.e.} solution of the Neumann problem
\begin{gather}
    \tilde{L}\tilde{u}:=\frac{1}{s^2}\big(\tilde{a} \utilde_y\big)_y + \frac{1}{s}\big(\tilde{b} + y s'\big) \utilde_y + \tilde{c} \utilde - \utilde_{t} = \tilde{f}, ~\text{in}~Q_T\label{eq:pde-flat}
    \\
    \utilde(x,0) = \tilde{\phi}(x), ~0 \leq x \leq 1,\label{eq:pde-init-flat}
    \\
    \tilde{a}(0, t) \utilde_y(0, t) = g(t)s(t), ~0 \leq t \leq T,\label{eq:pde-bound-flat}
    \\
    \tilde{a}(1, t) \utilde_y(1, t) = \tilde{\chi}(1, t) s(t) - \tilde{\gamma}(1,t)s'(t)s(t),
    ~0 \leq t \leq T.\label{eq:pde-free-flat}
\end{gather}
After~\cite[Lem.\ 6.4]{abdulla16}, we have
\begin{lemma}\label{lem:j-well-defined}
  For fixed $v \in W_R$, there exists a unique solution $u \in B_2^{2,1}(\Omega)$ of the Neumann problem~\eqref{eq:intro-pde}--\eqref{eq:intro-stefan-cond} for which the transformed function $\utilde$ solves~\eqref{eq:pde-flat}--\eqref{eq:pde-free-flat} and satisfies the following energy estimate
  \begin{gather}
    \norm{\utilde}_{\solnspace(Q_T)} \leq C \Big(
    \norm{f}_{\fcontrolspace(D)}
    + \norm{\phi}_{\ivspace(0,s_0)} \nonumber
    \\
    +\norm{g}_{\uxxtrspace(0,T)}
    +\norm{\chi}_{\chigammaspace(D)}
    +\norm{\gamma}_{\chigammaspace(D)}
    \Big)\label{eq:energy-est-utilde}
  \end{gather}
  where $\alpha^*>\alpha$ is arbitrary.
\end{lemma}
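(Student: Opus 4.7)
The plan is to derive both existence/uniqueness and the energy estimate~\eqref{eq:energy-est-utilde} by applying Solonnikov's result (Lemma~\ref{thm:solonnikov-solution-existence}) to the transformed problem~\eqref{eq:pde-flat}--\eqref{eq:pde-free-flat} on the fixed cylinder $Q_T$, with the Solonnikov parameter chosen as $\l = 5/4 + \alpha$, so that $2\l = 5/2 + 2\alpha$ matches the target space $\solnspace(Q_T)$, and $\l - 3/4 = 1/2 + \alpha$ matches the boundary-data space $B_2^{1/2+\alpha}(0,T)$. Then $2\l - 2 = 1/2 + 2\alpha$ and $\l - 1 = 1/4 + \alpha$, so Solonnikov asks for a right-hand side in $B_{2,x,t}^{1/2+2\alpha,1/4+\alpha}$; the control-space hypothesis $f \in \fcontrolspace(D) = B_{2,x,t}^{1,1/4+\alpha}(D)$ is strictly stronger and supplies this.

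The first step is to verify that the transformed equation~\eqref{eq:pde-flat} has coefficients of the regularity Solonnikov demands, namely $C_{x,t}^{2\l^{*}-2,\l^{*}-1}$ for some $\l^{*} > \l$. For $v \in W_R$ we have $s \in B_2^2(0,T) \hookrightarrow C^{1,1/2}([0,T])$ by Sobolev embedding, so the change of variable $x = y s(t)$ is a bi-Hölder diffeomorphism whose regularity dominates the $1/2 + 2\alpha^{*}$ we need. Consequently $\tilde{a},\tilde{b},\tilde{c}$ lie in $C_{x,t}^{1/2+2\alpha^{*},1/4+\alpha^{*}}(Q_T)$, and the additional coefficient $y s'(t)$ appearing in the transport term inherits the same Hölder regularity from $s' \in B_2^1(0,T) \hookrightarrow C^{1/2+\epsilon}$. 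The second step is to bound each piece of data: $\tilde\phi \in \ivspace(0,1)$ since $\phi \in \ivspace(0,s_0)$ and the dilation $y \mapsto y s_0$ is smooth; the boundary data $g(t)s(t)$ and $\tilde\chi(1,t)s(t) - \tilde\gamma(1,t)s'(t)s(t)$ lie in $B_2^{1/2+\alpha}(0,T)$ by the multiplier properties of the relevant Besov spaces, using the trace embedding $\chi, \gamma \in \chigammaspace(D) \hookrightarrow B_2^{\l - 3/4}$ on $y = 1$ together with $s \in B_2^2$, $s' \in B_2^1$; finally $\tilde f \in B_{2,x,t}^{1/2+2\alpha,1/4+\alpha}(Q_T)$ by change of variables. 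Each of these estimates is linear in the corresponding original norm (times a constant depending only on $R$, $\delta$, and $\l$), which is what produces the right-hand side of~\eqref{eq:energy-est-utilde}.

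The third step is to check the zeroth-order consistency condition $k = [\l - 5/4] = 0$: at $y = 0, t = 0$ one needs $\tilde{a}(0,0)\tilde{u}_y(0,0) = g(0)s(0)$, which after unwinding the change of variables becomes $a(0,0)\phi'(0) = g(0)$, precisely the constraint built into $W_R$; at $y = 1, t = 0$ one needs $\tilde{a}(1,0)\tilde{u}_y(1,0) = \tilde\chi(1,0)s_0 - \tilde\gamma(1,0)s'(0)s_0$, which reduces to $\chi(s_0,0) = a(s_0,0)\phi'(s_0) + \gamma(s_0,0)s'(0)$, again built into $W_R$. With hypotheses and compatibility verified, Lemma~\ref{thm:solonnikov-solution-existence} supplies a unique $\tilde u \in \solnspace(Q_T)$ satisfying the estimate~\eqref{eq:solonnikov-energy-est}; substituting the bounds from step two yields~\eqref{eq:energy-est-utilde}. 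Pulling back $u(x,t) = \tilde u(x/s(t), t)$ gives the desired solution in $B_2^{2,1}(\Omega)$, and uniqueness in $B_2^{2,1}(\Omega)$ follows from uniqueness for the transformed Neumann problem on $Q_T$.

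The main obstacle I anticipate is not the abstract application of Solonnikov but the bookkeeping associated with the change of variables: one must show that multiplying by $s$, $s'$, $1/s$, or composing with $x = y s(t)$ preserves membership in the relevant fractional Besov spaces with linear control of norms. This is where the choice $\alpha^{*} > \alpha$ in the data hypothesis is essential, since it provides the extra Hölder cushion needed so that products and compositions with the moderately regular free boundary $s$ (only in $B_2^2 \subset C^{1,1/2}$) land back in the slightly less regular spaces demanded on the cylinder. These multiplier and composition estimates are the technically delicate ingredient; once they are in hand, everything else is a direct verification of the hypotheses of Lemma~\ref{thm:solonnikov-solution-existence}.
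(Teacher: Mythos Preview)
Your approach is correct and matches the paper's: the paper does not give its own proof but states the lemma ``after~\cite[Lem.~6.4]{abdulla16}'' and then remarks that Lemmas~\ref{lem:j-well-defined}--\ref{lem:deltau-main-est} ``follow easily from the proofs of the corresponding results in~\cite{abdulla17,abdulla16},'' which proceed exactly by applying Solonnikov's estimate (Lemma~\ref{thm:solonnikov-solution-existence}) with $\l = 5/4 + \alpha$ to the flattened problem~\eqref{eq:pde-flat}--\eqref{eq:pde-free-flat}. One minor slip: the embedding $B_2^1(0,T) \hookrightarrow C^{1/2}$ does not give $C^{1/2+\epsilon}$, but since only $C^{1/4+\alpha^*}$ regularity of $s'$ is needed for the coefficient check, this does not affect the argument.
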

\begin{lemma}\label{lem:adjoint-solution-exists}
  For fixed $v \in W_R$, given the corresponding state vector $u=u(x,t;v)$, there exists a unique solution $\psi \in \adjointsolnspace(\Omega)$ of the adjoint problem~\eqref{eq:adjoint-problem} and the following energy estimate is valid
  \begin{equation}
    \begin{gathered}
      \norm{\psi}_{\adjointsolnspace(\Omega)} \leq C \Big(
      \norm{f}_{L_2(\Omega)}
      + \norm{\phi}_{B_2^1(0,s_0)}
      + \norm{g}_{\uxxtrspace(0,T)}
      + \norm{\chi}_{B_{2,x,t}^{1+2\alpha,\frac{1}{2}+\alpha}(D)}
      \\
      + \norm{s}_{\scontrolspace(0,T)} \norm{\gamma}_{B_{2,x,t}^{1+2\alpha,\frac{1}{2}+\alpha}(D)}
      + \norm{w}_{B_2^1(0,s(T))}
      + \norm{\mu}_{\muspace(0,T)}
      + \norm{u - u_*}_{L_2(\Omega)}
      \Big).
    \end{gathered}\label{eq:psi-energy-est-1}
  \end{equation}
\end{lemma}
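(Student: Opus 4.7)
The plan is to recast the adjoint problem as a standard forward linear parabolic problem on the flattened cylinder $Q_T$ and to invoke the Solonnikov-type existence and energy estimate of Lemma~\ref{thm:solonnikov-solution-existence}, with $\ell=1$ (equivalently, the classical $W_{2,x,t}^{2,1}$-theory of Ladyzhenskaya), and then to unwind the change of variables to obtain the estimate~\eqref{eq:psi-energy-est-1} in the original data.

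\textbf{Step 1: reduction to a forward Neumann problem on a cylinder.} First reverse time by setting $\tau=T-t$ and $\Psi(x,\tau)=\psi(x,T-\tau)$. Then $\Psi$ satisfies a forward parabolic equation in $\Omega$ with initial data $\Psi|_{\tau=0}=2\beta_0(u(x,T)-w(x))$, a Robin-type boundary condition at $x=0$, and a boundary condition at the curve $x=s(T-\tau)$ involving the drift $b+s'$. Next, mirror the construction preceding Lemma~\ref{lem:j-well-defined}: set $y=x/s(T-\tau)$ to flatten $\Omega$ to $Q_T$, and let $\tilde\Psi(y,\tau)=\Psi(y\,s(T-\tau),\tau)$. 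Arguing as in the derivation of~\eqref{eq:pde-flat}--\eqref{eq:pde-free-flat}, $\tilde\Psi$ solves a linear parabolic problem of Solonnikov type~\eqref{eq:solonnikov-problem} with transformed coefficients $\tilde a/s^2$, lower order drift involving $\tilde b$, $\tilde c$, and $s'$, right-hand side proportional to $(u-u_*)_+$, initial data the trace at $t=T$, and Neumann data $\chi_1=0$ at $y=0$ and $\chi_2=2\beta_1(u(s(t),t)-\mu(t))\cdot s(t)$ at $y=1$.

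\textbf{Step 2: application of Solonnikov's estimate.} Because $v\in W_R$ gives $a,a_x,b,c\in C_{x,t}^{\tfrac12+2\alpha^*,\tfrac14+\alpha^*}(D)$ and $s\in B_2^2(0,T)$ with $s\geq\delta$, the transformed coefficients of the flattened adjoint equation lie in the class required by Lemma~\ref{thm:solonnikov-solution-existence} with exponent $\ell=1$ (at this borderline case one appeals either to~\cite{solonnikov64} with $\ell=1+\varepsilon$ and a passage to the limit, or directly to the classical $W_{2,x,t}^{2,1}$ theory, for which no consistency condition is needed beyond $k=\lfloor \ell-5/4\rfloor<0$). This yields a unique solution $\tilde\Psi\in B_{2,x,t}^{2,1}(Q_T)$ together with the estimate
\begin{gather*}
\norm{\tilde\Psi}_{B_{2,x,t}^{2,1}(Q_T)}
\leq C\Big(
\norm{(u-u_*)_+}_{L_2(Q_T)}
+\norm{u(\cdot,T)-w}_{B_2^1(0,s(T))}\\
+\norm{g_{\mathrm{bdry},0}}_{B_2^{1/4}(0,T)}
+\norm{2\beta_1(u(s(\cdot),\cdot)-\mu)\,s}_{B_2^{1/4}(0,T)}
\Big),
\end{gather*}
where the first boundary datum vanishes. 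Reverting the change of variables $(y,\tau)\mapsto(x,t)$ is a bi-Lipschitz map of $Q_T$ onto $\Omega$ whose Jacobian is controlled by $\delta,R$, and by Lemma~\ref{lipschitz}, so $\tilde\Psi\in B_{2,x,t}^{2,1}(Q_T)$ translates to $\psi\in B_{2,x,t}^{2,1}(\Omega)$ with a comparable norm.

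\textbf{Step 3: replacing the $u$-terms by data norms.} The four quantities $\|(u-u_*)_+\|_{L_2(\Omega)}$, $\|u(\cdot,T)\|_{B_2^1(0,s(T))}$, and $\|u(s(t),t)\|_{B_2^{1/4}(0,T)}$ must be controlled by the original data listed on the right-hand side of~\eqref{eq:psi-energy-est-1}. The pointwise contraction~\eqref{eq:trunc-contractive} of Lemma~\ref{lem:trunc-props} gives $\|(u-u_*)_+\|_{L_2}\leq\|u-u_*\|_{L_2}$. For the two traces we use Lemma~\ref{lem:j-well-defined}: $\tilde u\in B_{2,x,t}^{5/2+2\alpha,5/4+\alpha}(Q_T)$, hence by Lemma~\ref{thm:solonnikov-traces} its spatial trace at $\tau=0$ lies in $B_2^{3/2+2\alpha}\hookrightarrow B_2^1$ and its boundary trace at $y=1$ lies in $B_2^{5/4+\alpha-1/4}=B_2^{1+\alpha}\hookrightarrow B_2^{1/4}$, with norms dominated by the energy bound~\eqref{eq:energy-est-utilde}. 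Combining these with the elementary bounds $\|w-u(\cdot,T)\|_{B_2^1}\leq\|w\|_{B_2^1(0,s(T))}+\|u(\cdot,T)\|_{B_2^1}$ and the analogous bound at the free boundary and collecting constants (which depend on $\delta,R,T$, the $B_2^2$-norm of $s$, and the coefficient norms) produces the estimate~\eqref{eq:psi-energy-est-1}. Uniqueness follows from linearity and the estimate applied to the difference of two solutions.

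\textbf{Expected main obstacle.} The delicate part is the borderline index $\ell=1$ in Solonnikov's theorem, together with keeping careful track of how the change of variables transforms the parabolic Besov seminorms on the curvilinear domain $\Omega$ to those on $Q_T$; this is where the assumed regularity $s\in B_2^2(0,T)$, $a_x\in C^{1/2+2\alpha^*,1/4+\alpha^*}$ and the uniform lower bound $s\geq\delta$ are used in an essential way.
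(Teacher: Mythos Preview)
Your approach is correct and is precisely the route the paper takes: the paper does not prove this lemma in the text but states that it ``follows easily from the proofs of the corresponding results in~\cite{abdulla17,abdulla16},'' where exactly this time-reversal, domain-flattening, and Solonnikov-estimate strategy is carried out for the adjoint problem.

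Two small imprecisions worth tightening. First, the condition at $x=0$ is Robin ($a\psi_x-b\psi=0$), not homogeneous Neumann; after flattening you get $\tilde a\tilde\Psi_y=s\,\tilde b\,\tilde\Psi$ at $y=0$, so $\chi_1$ is not identically zero but a lower-order term that Solonnikov's theory (or the $W_2^{2,1}$ theory in~\cite{ladyzhenskaya68}) absorbs. Second, in Step~3, invoking the full estimate~\eqref{eq:energy-est-utilde} would place $\norm{f}_{\fcontrolspace(D)}$ and $\norm{\phi}_{B_2^{3/2+2\alpha}}$ on the right, which are stronger than the $\norm{f}_{L_2}$ and $\norm{\phi}_{B_2^1}$ in~\eqref{eq:psi-energy-est-1}; to match the stated norms you need instead the intermediate $B_{2,x,t}^{2,1}$ energy estimate for $u$ (also from~\cite{abdulla16,abdulla17}), whose right-hand side carries exactly the $L_2$ and $B_2^1$ norms of $f$ and $\phi$ together with the $B_2^{1/2+\alpha}$ norm of $g$ and the $B_{2,x,t}^{1+2\alpha,1/2+\alpha}$ norms of $\chi,\gamma$.
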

Take $\utilde$, $\tilde{a}$, etc.\ as in the proof of Lemma~\ref{lem:j-well-defined}, transform the domain $\Omega_{\bar{s}}$ to $Q_T$ by taking
$\ybar = x / \sbar(t)$, etc.\ in a similar way, and define $\tildebar{u}$,
$\tildebar{a}$, etc.\@
For $d$ standing for any of $u$, $f$, $a$, $b$, $c$, $\gamma$, $\chi$, denote by
\[
  \Delta \tilde{d}(y,t) := \tildebar{d}(y,t)-\tilde{d}(y,t),~(y,t) \in Q_T.
\]
\begin{lemma}\label{lem:deltau-main-est}
Under conditions~\eqref{eq:frechet-result-datacond},
\begin{equation}
  \norm{\Delta \utilde}_{\solnspace(Q_T)} \to 0
  ~\text{as}~ {\Delta v} \to 0~\mathrm{in}~\controlSpace{}.\label{eq:deltau-vanishes}
\end{equation}
Moreover,
\begin{gather}
  \norm{\Delta \utilde}_{V_2^{1,0}(Q_T)}
  \leq C \big(
  \norm{\Delta s}_{\scontrolspace(0,T)}^{1/2+\alpha^*}
  + \norm{\Delta g}_{L_2(0,T)}
  + \norm{\Delta f}_{L_2(D)}
  \big).\label{eq:v210-deltau-energy-est}
\end{gather}
\end{lemma}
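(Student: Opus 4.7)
The plan is to write down the linear parabolic Neumann problem satisfied by $\Delta\utilde$ on the cylindrical domain $Q_T$ and then invoke Lemma~\ref{thm:solonnikov-solution-existence} to obtain~\eqref{eq:deltau-vanishes} and a standard $V_2^{1,0}$-energy inequality to obtain~\eqref{eq:v210-deltau-energy-est}. Subtracting~\eqref{eq:pde-flat}--\eqref{eq:pde-free-flat} written for $v+\Delta v$ from those for $v$, the difference $\Delta\utilde$ solves
\begin{gather*}
  \tilde L (\Delta\utilde) = F~\mathrm{in}~Q_T,\quad
  \Delta\utilde(y,0) = 0,~0\leq y\leq 1,
  \\
  \tilde a(0,t)\Delta\utilde_y(0,t) = G_0(t),\quad
  \tilde a(1,t)\Delta\utilde_y(1,t) = G_1(t),~0\leq t\leq T,
\end{gather*}
where $F,G_0,G_1$ are finite sums of products of coefficient-differences $\Delta\tilde d$ with derivatives of $\tildebar u$, together with explicit contributions of $\Delta g,\Delta f,\Delta s,\Delta s'$ arising from the quasilinear terms $y s' \utilde_y$, $g(t)s(t)$, and $\tilde\chi(1,t)s(t)-\tilde\gamma(1,t)s'(t)s(t)$. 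The initial datum vanishes because both controls share $\phi$ on $[0,s_0]$.

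First I would apply Lemma~\ref{thm:solonnikov-solution-existence} with $\ell=5/4+\alpha$ and $\ell^*=5/4+\alpha^*$. The hypotheses~\eqref{eq:frechet-result-datacond} supply exactly the required Hölder regularity $C_{x,t}^{1/2+2\alpha^*,1/4+\alpha^*}$ of the flattened $\tilde a,\tilde b,\tilde c$ on $Q_T$. The zeroth-order compatibility condition at the corners $(0,0)$ and $(1,0)$ for $\Delta\utilde$ holds automatically: subtracting the pointwise side-constraints in~\eqref{eq:control-set2} (imposed on both $v$ and $v+\Delta v$) gives $\Delta s(0)=\Delta s'(0)=\Delta g(0)=0$, and the shared $s_0$ forces $\Delta\tilde d(\cdot,0)\equiv 0$ for every coefficient $d$, so $G_0(0)=G_1(0)=0$ and this matches the vanishing $\Delta\utilde_y(\cdot,0)$. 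Controlling $\norm{F}_{\densityspace(Q_T)}$, $\norm{G_0}_{\uxxtrspace(0,T)}$ and $\norm{G_1}_{\uxxtrspace(0,T)}$ reduces to the energy bound~\eqref{eq:energy-est-utilde} for $\tildebar u$ together with a continuity-of-composition estimate: for every coefficient $d$ of the regularity assumed in~\eqref{eq:frechet-result-datacond}, the norm of $\Delta\tilde d(y,t):=d(y\bar s(t),t)-d(y s(t),t)$ in the appropriate parabolic Besov space vanishes as $\norm{\Delta s}_{\scontrolspace(0,T)}\to 0$. The Solonnikov estimate~\eqref{eq:solonnikov-energy-est} then delivers~\eqref{eq:deltau-vanishes}.

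For~\eqref{eq:v210-deltau-energy-est} the much cheaper first energy inequality for weak $V_2^{1,0}$-solutions suffices: testing the PDE for $\Delta\utilde$ against $\Delta\utilde$ itself and integrating by parts controls $\norm{\Delta\utilde}_{V_2^{1,0}(Q_T)}$ by $\norm{F}_{L_2(Q_T)}+\norm{G_0}_{L_2(0,T)}+\norm{G_1}_{L_2(0,T)}$. The $\Delta g$ and $\Delta f$ contributions enter directly. Every coefficient-difference contribution is bounded using Hölder continuity in $x$ of order $1/2+2\alpha^*$ combined with Morrey's embedding $\scontrolspace(0,T)\hookrightarrow C^1[0,T]$, which yields
\begin{equation*}
  \norm{\Delta\tilde d}_{L_\infty(Q_T)}
  \leq C\norm{\Delta s}_{\scontrolspace(0,T)}^{1/2+2\alpha^*},
\end{equation*}
while the residual explicit $\Delta s,\Delta s'$ terms are handled by the same embedding. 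Since $\norm{\Delta v}_{\controlSpace}$ may be assumed small, the exponent $1/2+2\alpha^*$ may be relaxed to the weaker $1/2+\alpha^*$, proving~\eqref{eq:v210-deltau-energy-est}.

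The main obstacle is the bookkeeping at two interlocking points: first, verifying that the pointwise side-constraints in~\eqref{eq:control-set2}, together with $\phi$ and $s_0$ being common to both controls, are exactly what is needed to kill the Solonnikov compatibility defect for $\Delta\utilde$ at both corners to order $k=[\ell-5/4]=0$; and second, estimating the cross-products $\Delta\tilde d\cdot\tildebar u_y$ (and similar) in the parabolic Besov norm $\densityspace(Q_T)$, which requires a Besov product rule together with a factorization of $\Delta\tilde d$ through $\Delta s$ in the parabolic Besov scale. Once these steps are in place, both conclusions follow by direct application of the two energy estimates together with~\eqref{eq:energy-est-utilde} for $\tildebar u$.
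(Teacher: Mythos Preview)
Your proposal is correct and follows essentially the same approach the paper indicates. The paper itself gives no detailed argument here, stating only that the lemma ``follows easily from the proofs of the corresponding results in~\cite{abdulla17,abdulla16}''; your plan---subtracting the flattened Neumann problems, verifying the order-zero corner compatibility via the side constraints in $W_R$, applying Solonnikov's $B_{2,x,t}^{2\ell,\ell}$ estimate for~\eqref{eq:deltau-vanishes}, and using the first energy inequality together with the $C_{x,t}^{1/2+2\alpha^*,1/4+\alpha^*}$ H\"older bound on coefficient differences for~\eqref{eq:v210-deltau-energy-est}---is precisely the method carried out in those references.
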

Lemmas~\ref{lem:j-well-defined},~\ref{lem:adjoint-solution-exists}, and~\ref{lem:deltau-main-est} follow
easily from the proofs of the corresponding results in~\cite{abdulla17,abdulla16}.
\endgroup
\section{Proofs of Main Results}\label{sec:proof-main-results}
\subsection{Proof of Theorem~\ref{thm:existence-convergence}}

\begingroup
\def\scontrolspace{B_2^2}
\def\sdiscretecontrolspace{b_2^2}
\def\gcontrolspace{B_2^1}
\def\gdiscretecontrolspace{b_2^1}
\def\fcontrolspace{L_2}
\def\fdiscretecontrolspace{{\l}_{2}}

\def\controlVars{s, g, f}
\def\controlVarsWithN{\controlVarsSup{n}}
\def\controlVarsWithSupStar{\controlVarsSup{*}}
\def\controlVarsWithDelta{{\Delta{} s}, {\Delta{} g}, {\Delta{} f}}
\def\controlVarsWithdelta{{\delta{} s}, {\delta{} g}, {\delta{} f}}
\def\controlVarsWithTilde{\tilde{s}, \tilde{g}, \tilde{f}}
\def\controlVarsWithBar{\bar{s}, \bar{g}, \bar{f}}
\def\controlVarsWithNSub{\controlVarsSub{n}}
\def\controlVarsWithStar{\controlVarsSub{*}}
\def\controlVarsWithEpsilon{\controlVarsSub{\epsilon}}

\def\controlSpace{H}
\def\controlSpaceFull{\scontrolspace(0,T)\times{} \gcontrolspace(0,T)\times{}
  \fcontrolspace(D)}

\def\controlSpaceWeaklyConverge{\scontrolspace(0,T) \times{} \gcontrolspace(0,T) \times{} \fcontrolspace(D)}

\def\controlVarsStronglyConverge{s, g}
\def\controlVarsStronglyConvergeWithTilde{\tilde{s}, \tilde{g}}
\def\controlVarsStronglyConvergeWithSupStar{s^*, g^*}
\def\controlVarsStronglyConvergeWithN{s^n, g^n}
\def\controlSpaceStronglyConverge{B_2^1(0,T) \times{} L_2(0,T)}

\def\discreteControlVars{[s]_n, [g]_n, [f]_{nN}}
\def\discreteControlVarsList{$[s]_n$, $[g]_n$, $[f]_{nN}$}
\def\discreteControlSet{V_R^n}

\def\P{\mathcal{P}}
\def\Q{\mathcal{Q}}
\def\probIn{\mathcal{I}_n}
\def\fIn{I_n}
\begin{proof}[Proof of Part 1.]
  Let $\bk{v_{n}}\in {V_R}$ be a minimizing sequence for $\J$; that is,
  \[
    \lim_{n\to\infty} \J(v_{n}) = J_{*}
  \]
  Note that since $J(v) \geq 0$, $J_{*}\geq 0$.
  Since ${V_R}$ is bounded in the Hilbert space $\controlSpace{}$, $v_{n}=(\controlVarsWithNSub)$ is weakly precompact in $\controlSpaceWeaklyConverge{}$.
  Assume that $v_{n}\to v=(\controlVars{})\in {V_R}$ weakly in $\controlSpaceWeaklyConverge{}$,
  and hence $(\controlVarsStronglyConverge{})$ converge strongly in
  $\controlSpaceStronglyConverge{}$.
  Let $u_{n},u \in B_{2}^{1,1}(D)$ be the corresponding solutions to the Neumann problem~\eqref{eq:intro-pde}--\eqref{eq:intro-stefan-cond} in $B_{2}^{1,1}(\Omega_{n})$ and $B_{2}^{1,1}(\Omega)$, respectively, where
  \[
    \Omega_{n}=\bk{(x,t):0<x<s_{n}(t),~0<t<T}
  \]
  and $u_n$ and $u$ are extended to $B_2^{1,1}(D)$ such that
  \[
    \norm{u_n}_{B_2^{1,1}(D)} \leq C \norm{u_n}_{B_2^{1,1}(\Omega_n)},\quad
    \norm{u}_{B_2^{1,1}(D)} \leq C \norm{u}_{B_2^{1,1}(\Omega)}
    \]
  $u_{n}$ and $u$ satisfy the
  estimate~\eqref{eq:second-energy-est-conseq} with $(g_{n}, f_n)$ and
  $(g, f)$ respectively.
  Since $v_{n}\in V_{R}$, $u_{n}$ is in fact uniformly bounded in
  $B_{2}^{1,1}(D)$.
  Considering the sequence
  \[
    \Delta u = \Delta u_{n}= u_{n}-u,
  \]
  from Lemma~\ref{thm:SCT} we have the estimate
  $\norm{\Delta u}_{B_{2}^{1,1}(D)} \leq C$ uniformly with respect to $n$.
  Therefore, $\bk{\Delta u}$ is weakly precompact in $B_{2}^{1,1}(D)$.
  Without loss of generality, assume that $u_{n}-u$ converges weakly in $B_{2}^{1,1}(D)$ to an element $w \in B_{2}^{1,1}(D)$.
  Assume temporarily that the fixed test function $\Phi \in C^1(\bar{D})$.
  Considering the integral identities satisfied by $u_{n}$ and $u$,
  \begin{gather}
    0=\int_{0}^{T}\int_{0}^{s_{n}(t)}\big[
    au_{n,x}\Phi_{x}-b_n u_{n,x}\Phi - c_n u_{n}\Phi + u_{n,t}\Phi + f_n \Phi
    \big] \,dx\,dt\nonumber
    \\
    +\int_{0}^{T}\left[
      \gamma\big(s_{n}(t),t\big)s_{n}'(t)-\chi\big( s_{n}(t),t\big)
    \right]\Phi(s_{n}(t),t)\,dt + \int_{0}^{T}g_{n}(t)\Phi(0,t)
    \,dt\nonumber
    \intertext{and}
    0=\int_{0}^{T}\int_{0}^{s(t)}\big[
      au_{x}\Phi_{x}-bu_{x}\Phi-cu\Phi +
      u_{t}\Phi + f\Phi
      \big]\, dx\,dt\nonumber
    \\
    +\int_{0}^{T}\left[
      \gamma\big(s(t),t\big)s'(t)-\chi\big( s(t),t\big)
      \right]\Phi(s(t),t)\,dt + \int_{0}^{T}g(t)\Phi(0,t)\,dt\nonumber
  \end{gather}
  respectively, subtracting one from the other we see that $\Delta u = u_{n}-u$ satisfies
  \begin{gather}
    0=\int_{0}^{T}\int_{0}^{s(t)}\big[
    a{\Delta u}_{x}\Phi_{x}-b {\Delta u}_{x}\Phi - c {\Delta u}\Phi + {\Delta u}_{t}\Phi
    \big]\, dx\,dt\nonumber
    \\
    + I_1 + I_2 + I_3 + I_4 + I_5,\label{eq:identity-for-limitpt-w:disc}
    \intertext{where}
    I_1 := \int_{0}^{T}\int_{0}^{s(t)}\left(f_n - f\right) \Phi \,dx\,dt,\nonumber
    \\
    I_2 := -\int_{0}^{T}\int_{s_{n}(t)}^{s(t)}\big[ au_{n,x}\Phi_{x}-b_n u_{n,x}\Phi -
      c_n u_{n}\Phi + u_{n,t}\Phi + f_n \Phi\big]\, dx\,dt,\nonumber
    \\
    I_3 := \int_{0}^{T}\left[ \gamma\big(s_{n}(t),t\big)s_{n}'(t)-\chi\big(
    s_{n}(t),t\big)\right]\left(  \Phi(s_{n}(t),t)-\Phi(s(t),t)\right)\,dt,\nonumber
    \\
    I_4 := \int_{0}^{T}\Big{\{}
    \left[ \gamma s_{n}'(t)-\chi \right]_{x=s_n(t)}
    -\left[ \gamma s'(t)-\chi\right]_{x=s(t)}
    \Big{\}}\Phi\Big(s(t),t)\,dt,\nonumber
    \\
    I_5:= \int_{0}^{T}\left[ g_n(t)-g(t) \right]\Phi(0,t)\,dt,\nonumber
  \end{gather}

  for arbitrary fixed $\Phi \in C^1(\bar{D})$.
  Each of the terms $I_{1},\ldots,I_5$ vanish as $n\to\infty$.
  By weak convergence of $f_n$ to $f$ in $L_2(\Omega)$, it follows that
  $\lnorm{I_1} \to 0$ as $n \to \infty$.
  Each term in $I_2$ is handled using CBS inequality as well:
  \begin{gather}
    \lnorm{\int_{0}^{T}\int_{s(t)}^{s_{n}(t)} au_{n,x}\Phi_{x} \,dx \,dt}
    \leq M \norm{\Phi_x}_{C(D)} \norm{s_n-s}_{C[0,T]}^{1/2} \norm{u_n}_{B_2^{1,0}(D)}\to 0\nonumber
  \end{gather}
  as \(n \to \infty{}\) due to uniform boundedness of $u_{n}\in B_{2}^{1,0}(D)$ and uniform convergence of $s_{n}\to s$ on $[0,T]$.
  Treating each term in $I_2$ similarly, it follows that $\lnorm{I_{2}}\to 0$ as
  $n\to\infty$.
  Similarly, CBS inequality, continuity of the $L_2$ norm with respect to shift and uniform convergence of $s_n \to s$ imply $ \lnorm{I_{3}} \to 0$ and $\lnorm{I_4} \to 0$ as $n\to\infty$.
  Lastly, convergence of $g_{n}\to g$ strongly in $L_{2}(0,T)$ implies $\lnorm{I_{5}}\to 0$ as $n \to \infty$.

  Therefore, passing to the limit as $n\to\infty$ in~\eqref{eq:identity-for-limitpt-w:disc} we see that the limit point $w$ satisfies
  \begin{equation}
    0=\int_{0}^{T}\int_{0}^{s(t)}\big[
      a  w_{x}\Phi_{x}-b w_{x}\Phi-c w \Phi + w_{t}\Phi
      \big]\, dx\,dt,\quad \forall \Phi \in C^1(\bar{D}).\nonumber
  \end{equation}
  By extension of arbitrary $\Phi \in B_{2}^{1,1}(\Omega)$ to $B_2^{1,1}(D)$ and the density of $C^1(\bar{D})$ in $B_2^{1,1}(D)$, it follows that $w$ solves the Neumann problem~\eqref{eq:intro-pde}--\eqref{eq:intro-stefan-cond} with $f=g=\gamma=\chi\equiv 0$.
  By the uniqueness of the solution to the Neumann problem
  it follows that $u_{n} \to u$ weakly in $B_{2}^{1,1}(D)$ (and hence \(u_n \to
  u\) strongly in \(L_2(D)\)).
  By the Sobolev trace theorem~\cite{besov79,besov79a,nikolskii75}, it follows that
  \begin{equation}
    \norm{u_{n}(x,T)-u(x,T)}_{L_{2}(0,s_{n}(T))}\to 0,
    \quad \norm{u_{n}(s(t),t)-u(s(t),t)}_{L_{2}(0,T)} \to 0\nonumber
  \end{equation}
  as \(n\to\infty{}\).
  By Newton-Leibniz, CBS and Morrey's inequalities we have
  \begin{equation}
    \norm{u_{n}(s_{n}(t),t) - u_{n}(s(t),t)}_{L_{2}(0,T)}
    \leq \norm{s_n-s}_{C[0,T]}\norm{u_{n,x}}_{L_2(D)}
    \to 0.\nonumber
  \end{equation}
  and so
  \begin{gather}
    \norm{u_{n}(s_{n}(t),t) - u(s(t),t)}_{L_{2}(0,T)}\leq \norm{u_{n}(s_{n}(t),t) - u_{n}(s(t),t)}_{L_{2}(0,T)} \nonumber
    \\
    + \norm{u_{n}(s(t),t) - u(s(t),t)}_{L_{2}(0,T)} \to 0~\text{as}~n\to\infty,\nonumber
  \end{gather}
  which implies $J(v_n) \to J(v)$.
Lastly, consider
\begin{gather*}
  {\Delta P}_n := A_n \lnorm{
    \int_0^T\int_0^{s_n(t)} \subPlus{u_n(x,t)-u_{*}}^2 \,dx\,dt
  - \int_0^T\int_0^{s(t)} \subPlus{u(x,t)-u_{*}}^2 \,dx\,dt }
\\
\leq A_n \norm{\subPlus{u_n(x,t)-u_{*}}^2-\subPlus{u(x,t)-u_{*}}^2 }_{L_1(\Omega)}
\\
+\lnorm{\int_0^T\int_{s_n(t)}^{s(t)} (\subPlus{u_n(x,t)-u_{*}})^2 \,dx\,dt}.
\end{gather*}
By CBS inequality, it follows that
\[
{\Delta P}_n
\leq A_n \left[ \bar{P}_1 + \bar{P}_2\right],
\]
where
\begin{gather*}
\bar{P}_1 = \norm{ \subPlus{u_n(x,t)-u_{*}}-\subPlus{u(x,t)-u_{*}}
}_{L_2(\Omega)}
\\
\qquad \cdot\norm{ \subPlus{u_n(x,t)-u_{*}}+\subPlus{u(x,t)-u_{*}} }_{L_2(\Omega)},
\\
\bar{P}_2=\lnorm{ \int_0^T\int_{s_n(t)}^{s(t)} (\subPlus{u_n(x,t)-u_{*}})^2\, dx\,dt }.
\end{gather*}
As shown in Lemma~\ref{lem:trunc-props},
$\subPlus{u_n(x,t)-u_{*}},\subPlus{u_n(x,t)-v_{*}} \in B_2^{1,1}(\Omega)$
and, from~\eqref{eq:trunc-contractive} and CBS inequality it follows that
\[
  \bar{P}_1 \leq \norm{u_n(x,t) - u(x,t)}_{L_2(\Omega)}\left( \norm{u(x,t)}_{L_2(\Omega)}+\norm{u(x,t)}_{L_2(\Omega)} + 2 u_* \lnorm{\Omega}\right).
\]
By strong convergence of \(u_n \to u\) in \(L_2(D)\) it follows that $\bar{P}_1 \to 0$ as $n \to \infty$.
Similarly,
\begin{align*}
  \bar{P}_2
  & \leq \lnorm{ \int_0^T\int_{s_n(t)}^{s(t)} \lnorm{u_n(x,t)-u_{*}}^2\,
    dx\,dt }
    \\
                & \leq 2  \int_0^T\int_{0}^{\l} \lnorm{u_n(x,t)-u_n(x,t;v_{*})}\lnorm{u(x,t) + u(x,t; v_*)} \,dx \,dt
    \\
       &\quad
         + 2 \lnorm{ \int_0^T\int_{s_n(t)}^{s(t)} \lnorm{u(x,t)}^2 \,dx \,dt}
         + 2 \norm{s - s_n}_{L_1(0,T)} \lnorm{u_*}^2
\end{align*}
hence by CBS inequality, absolute continuity of the integral, and uniform
convergence of \(s_n \to s\), it follows that \(\bar{P}_2 \to 0\)
as \(n \to \infty{}\).
Since $J(v_n) \to J(v)$ and $\Delta P_n \to 0$ as $n \to \infty$, it follows
that $\J(v_n) \to \J(v) = J_*$.
Theorem is proved.
\end{proof}
The proof of Theorem~\ref{thm:existence-convergence}, part 2 is established
through three Lemmas.
The first is established in an analogous way to~\cite[Lem.\ 3.2]{abdulla15}
\begin{lemma}\label{lem:J-epsilon}\cite{abdulla15}
  Let $\J_*(\pm \epsilon) = \inf\limits_{V_{R\pm \epsilon}}\J(v)$, $\epsilon > 0$.
  Then

  \begin{gather}
    \lim\limits_{\epsilon \to 0} \J_*(\epsilon)=\J_* = \lim\limits_{\epsilon \to 0} \J_*(-\epsilon).\label{eq:J-epsilon}
  \end{gather}
\end{lemma}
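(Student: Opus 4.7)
The plan is to exploit the inclusion $V_{R-\epsilon} \subset V_R \subset V_{R+\epsilon}$, which immediately gives the monotone bounds $\J_*(\epsilon) \leq \J_* \leq \J_*(-\epsilon)$ and hence $\limsup_{\epsilon \to 0}\J_*(\epsilon) \leq \J_* \leq \liminf_{\epsilon \to 0}\J_*(-\epsilon)$ for free; the work lies in proving the reverse inequalities.

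For the $+\epsilon$ direction I would pick $\epsilon_n \downarrow 0$ and almost-minimizers $v_n \in V_{R+\epsilon_n}$ with $\J(v_n) \leq \J_*(\epsilon_n) + 1/n$. Uniform boundedness $\|v_n\|_{\controlSpace{}} \leq R + \epsilon_1$ yields weak precompactness in $\controlSpaceWeaklyConverge{}$; extracting a subsequence $v_n \rightharpoonup v_*$, lower semicontinuity of the norm combined with strong convergence of $(s_n, g_n)$ in $\controlSpaceStronglyConverge{}$ via the compact embeddings $\scontrolspace \hookrightarrow \gcontrolspace \hookrightarrow L_2$ preserves every constraint defining $V_R$, so $v_* \in V_R$. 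The continuity-of-$\J$ argument already worked out in the proof of Part 1 of Theorem~\ref{thm:existence-convergence} (control of the $I_1,\ldots,I_5$ remainder terms together with ${\Delta P}_n \to 0$) then gives $\J(v_n) \to \J(v_*)$, hence $\J_* \leq \J(v_*) = \lim_n \J_*(\epsilon_n)$.

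For the $-\epsilon$ direction, the goal is: given any $v \in V_R$, construct $v_\epsilon \in V_{R-\epsilon}$ with $\J(v_\epsilon) \to \J(v)$; passing to the infimum over $v \in V_R$ then yields $\limsup_{\epsilon \to 0}\J_*(-\epsilon) \leq \J_*$. A natural candidate is the affine contraction toward the constant $s_0$,
\[
  s_\epsilon = s_0 + (1-\eta)(s - s_0), \quad g_\epsilon = (1-\eta) g, \quad f_\epsilon = (1-\eta) f,
\]
with $\eta = \eta(\epsilon) \downarrow 0$. This preserves $s_\epsilon(0) = s_0$, $s_\epsilon'(0) = 0$, and $s_\epsilon(t) \geq \delta$ (using $s_0 \geq \delta$, so that $s_\epsilon(t) = (1-\eta) s(t) + \eta s_0 \geq \delta$), and strong convergence $v_\epsilon \to v$ in $\controlSpace{}$ is immediate, so the continuity argument already recorded delivers $\J(v_\epsilon) \to \J(v)$.

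The main obstacle is verifying that one really has $\|v_\epsilon\|_{\controlSpace{}} \leq R - \epsilon$ for a suitable choice of $\eta(\epsilon)$: the $g$- and $f$-components scale linearly and shrink strictly, but the $\scontrolspace(0,T)$-norm of $s$ contains the $L_2$-piece $\|s\|_{L_2}$, and the affine contraction shifts $s_\epsilon$ toward the constant $s_0$ rather than uniformly scaling it, so this $L_2$ term is not automatically decreasing in $\eta$. I expect to handle this by the same case analysis as in~\cite[Lem.\ 3.2]{abdulla15}: when $\scontrolspace$ is not the active component of the max-norm, shrinking $g$ and $f$ already suffices; in the remaining case one splits off a small auxiliary perturbation (e.g.\ a multiple of $t^2(T-t)$, which vanishes together with its derivative at $t=0$ and strictly decreases the $\scontrolspace$-norm in the required direction), absorbed in the $\eta \to 0$ limit.
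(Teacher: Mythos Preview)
The paper gives no explicit proof here; the lemma is stated with a citation to~\cite{abdulla15} and the text says only that it is ``established in an analogous way to~\cite[Lem.\ 3.2]{abdulla15}''. Your outline is exactly that scheme: monotonicity from the nested inclusions $V_{R-\epsilon}\subset V_R\subset V_{R+\epsilon}$; weak compactness together with the weak sequential continuity of $\mathcal{J}$ (which is precisely what the proof of Part~1 of Theorem~\ref{thm:existence-convergence} establishes) for the $+\epsilon$ direction; and approximation of an arbitrary $v\in V_R$ from inside $V_{R-\epsilon}$ for the $-\epsilon$ direction. You have also correctly isolated the only genuine technical point---that the affine contraction $s_\epsilon=(1-\eta)s+\eta s_0$ respects the pointwise constraints but need not shrink the $L_2$-part of the $B_2^2$-norm---and correctly referred it back to the case analysis in the same reference, so your proposal coincides with the paper's deferred argument.
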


\begin{lemma}\label{lem:Qn-convergence}
  For arbitrary $v=(\controlVars{}) \in V_R$,

  \begin{gather}
    \lim\limits_{n \to \infty} \probIn(\Q_n(v)) = \J(v).\nonumber
  \end{gather}
\end{lemma}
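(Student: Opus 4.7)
The plan is to identify the discrete functional $\mathcal{I}_n(\mathcal{Q}_n(v))$ with integrals of suitable piecewise interpolants of the discrete state and control vectors, and then to pass to the limit via the convergence lemmas already established. First, set $v^n := \mathcal{P}_n(\mathcal{Q}_n(v)) = (s^n, g^n, f^n)$. The spline/Steklov construction of $\mathcal{P}_n\circ \mathcal{Q}_n$ together with standard approximation arguments yields $s^n \to s$ strongly in $B_2^2(0,T)$ (hence uniformly on $[0,T]$ via Morrey's inequality), $g^n \to g$ in $B_2^1(0,T)$, and $f^n \to f$ in $L_2(D)$. In particular $v^n \to v$ weakly in $\controlSpaceWeaklyConverge$, so Lemmas~\ref{thm:FCT} and~\ref{thm:SCT} give weak convergence of $\hat{u}^\tau$ in $B_2^{1,1}(\Omega')$ for any interior subdomain $\Omega'\subset \Omega$ and of $u^\tau$ in $B_2^{1,0}(\Omega)$ to the unique weak solution $u(x,t;v)$ of~\eqref{eq:intro-pde}--\eqref{eq:intro-stefan-cond}.

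Next I would rewrite each discrete sum as an integral of an appropriate interpolant:
\begin{gather*}
  I_n(\mathcal{Q}_n(v)) = \beta_0 \int_0^{s_n} |\tilde{u}^\tau(x,T)-w^\tau(x)|^2\,dx
  + \beta_1 \int_0^T |\tilde{u}^\tau(s^n(t),t)-\mu^\tau(t)|^2\,dt + \beta_2|s_n-s_*|^2,
  \\
  P_k^n(\mathcal{Q}_n(v)) = A_k \int_0^T\int_0^{s^n(t)} \subPlus{\tilde{u}^\tau(x,t)-u_*}^2\,dx\,dt,
\end{gather*}
where $w^\tau, \mu^\tau$ are Steklov averages of $w, \mu$ (converging strongly in the respective $L_2$ spaces by density). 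Since $\mathcal{Q}_n$ sets $s_n = s(T)$, the $\beta_2$ term is exact. The $\beta_0$ term converges because the uniform bound of Lemma~\ref{thm:SEE} together with compact embedding gives strong $L_2(D)$ convergence $\hat{u}^\tau \to u$, whence the Sobolev trace theorem yields $\hat{u}^\tau(\cdot,T) \to u(\cdot,T;v)$ in $L_2(0,s(T))$; the moving integration limit $s_n \to s(T)$ is handled exactly as in Part 1. The penalty term is treated as $\Delta P_n$ in Part 1, using the $L_2$-contractivity of the positive part from Lemma~\ref{lem:trunc-props} together with strong $L_2$ convergence.

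The principal obstacle is the $\beta_1$ term, whose integrand involves a trace on the approximating free boundary $x=s^n(t)$ rather than on $x=s(t)$. I would split
\[
  \tilde{u}^\tau(s^n(t),t) - u(s(t),t;v) = \bigl[\tilde{u}^\tau(s^n(t),t) - u^\tau(s(t),t)\bigr] + \bigl[u^\tau(s(t),t) - u(s(t),t;v)\bigr].
\]
The second bracket vanishes in $L_2(0,T)$ by the weak $B_2^{1,1}(\Omega')$ convergence of $\hat{u}^\tau$ applied on an interior strip approaching $x=s(t)$, together with the Sobolev trace theorem. The first bracket is controlled by Newton--Leibniz and Cauchy--Bunyakovsky--Schwarz,
\[
  \bigl|u^\tau(s^n(t),t) - u^\tau(s(t),t)\bigr|^2 \leq |s^n(t)-s(t)|\int_{\min(s,s^n)}^{\max(s,s^n)} |u^\tau_x(x,t)|^2\,dx,
\]
combined with the uniform convergence $s^n \to s$ and the uniform $B_2^{1,0}(D)$ bound on $u^\tau$ from Lemma~\ref{thm:FEE}; the piecewise-constant-vs-linear discrepancy $|\tilde{u}^\tau - u^\tau|$ is $O(\Delta^{1/2}) = O(\tau^{1/4})$ by~\eqref{eq:lipschitz-2} and Morrey applied on each subinterval. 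Assembling these ingredients gives $I_n(\mathcal{Q}_n(v)) \to J(v)$ and $P_k^n(\mathcal{Q}_n(v)) \to P_k(v)$, whence $\mathcal{I}_n(\mathcal{Q}_n(v)) \to \mathcal{J}(v)$.
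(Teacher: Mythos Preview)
Your overall strategy---rewrite the discrete sums as integrals of piecewise interpolants and then pass to the limit via Lemmas~\ref{thm:FCT} and~\ref{thm:SCT}---is exactly the paper's, and for the $I_n$-part the paper indeed just cites the corresponding lemma in~\cite{abdulla15}. The only place requiring new work is the penalty term $P_k^n$, and here your proposal has a gap. You assert that the uniform bound of Lemma~\ref{thm:SEE} plus compact embedding yields strong $L_2(D)$ convergence $\hat{u}^\tau\to u$, and then invoke the $\Delta P_n$ argument from Part~1 verbatim. But~\eqref{eq:SEE} controls $\sum h_i\tilde{u}_{i\bar t}^2(k)$ only for $i\leq m_j-1$, i.e.\ only inside the discrete free-boundary domain; the reflection extension to $D$ does not obviously inherit a uniform $B_2^{1,1}(D)$ bound, and Lemma~\ref{thm:SCT} in fact only delivers weak $B_2^{1,1}$-convergence on \emph{interior} subdomains $\Omega'\subset\Omega$. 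Rellich--Kondrachov therefore gives you strong $L_2(\Omega_m)$ convergence for each fixed interior $\Omega_m$, not strong $L_2(\Omega)$ or $L_2(D)$ convergence; the analogy with Part~1 (where the $u_n$ are continuous solutions with a bona fide $B_2^{1,1}(D)$ bound after extension) breaks down.

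The paper closes this gap by an explicit interior--strip decomposition with a double limit. Fix $\epsilon_m\downarrow 0$, set $\Omega_m=\{x<s(t)-\epsilon_m\}$, and split the penalty difference into $P_1$ over $\Omega_m$, $P_2$ over the continuous strip $s(t)-\epsilon_m<x<s(t)$, and $P_3$ over the discrete strip $s(t)-\epsilon_m<x<s^\tau(t)$. The interior piece $P_1\to 0$ for fixed $m$ by the compactness argument you describe (after also showing $\|\tilde u^\tau-\hat u^\tau\|_{L_2(\Omega_m)}\to 0$ via~\eqref{eq:SEE}). The continuous strip $P_2$ is small by absolute continuity of the integral. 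The discrete strip $P_3$ is the delicate one: the paper bounds it using the embedding $V_2^{1,0}(D)\hookrightarrow L_4(D)$ together with the uniform $V_2^{1,0}$ bound from the \emph{first} energy estimate~\eqref{eq:FEE}, so that
\[
  \left|\int_0^T\!\!\int_{s(t)-\epsilon_m}^{s^\tau(t)}(u^\tau-u_*)^2\,dx\,dt\right|
  \leq \|u^\tau-u_*\|_{L_4(D)}^2\,T^{1/2}\,\|s^\tau-s+\epsilon_m\|_{C[0,T]}^{1/2}
  \leq C\sqrt{\epsilon_m}
\]
after $n\to\infty$. One then takes $\limsup_n$ followed by $m\to\infty$. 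This two-step limit and the $L_4$ device are precisely what your shortcut elides; once you insert them, your argument and the paper's coincide.
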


\begin{proof}
   Fix $v \in V_R$ and let $[v]_n = (\discreteControlVars{}) = \Q{}_n(v)$.
  Let $u = u(x,t;v)$ and $\big[ u([v]_n)\big]_n$ be the corresponding continuous
  and discrete state vector, respectively, and denote by $v^{n} =
    (\controlVarsWithN{}) = \P{}_{n}({[v]}_{n})$.
  By Sobolev embedding theorem, $s^{n}(t) \to s(t)$ uniformly on $[0,T]$.
  Let $\epsilon_m \downarrow 0$ be an arbitrary sequence, and define
  \[
    \Omega_m = \bk{(x,t):0 < x < s(t) - \epsilon_m, 0 < t \leq T}
  \]
  and fix $m>0$.

  In Lemma~\ref{thm:SEE} it was shown that $\bk{\hat{u}^{\tau}}$
  converges to $u$ weakly in $B_2^{1,1}(\Omega_m)$ for any fixed $m$; by the
  embeddings of traces, it follows that $\bk{\hat{u}^{\tau}(s(t)-\epsilon_m,t)}$
  and $\bk{\hat{u}^\tau(x, T)}$ converge to the corresponding traces
  $u(s(t)-\epsilon_m, t)$ and $u(x,T)$ weakly in $L_2(0,T)$ and $L_2(0,
    s(t)-\epsilon_m)$, respectively.
  As in~\cite[Lem.\ 3.3]{abdulla15}, it follows that the corresponding traces of
  \(u^\tau{}\) satisfy the same property, and hence
  \begin{equation}
    \lim_{n \to \infty} I_n(\Q_n(v))
    = J(v).\label{eq:convergence-lemma-main-terms-abdulla15}
  \end{equation}
  It remains to show that
  \begin{equation}
    \lim_{n\to\infty} P_k^n(\Q_n(v))
    = P_k(v).\label{eq:convergence-lemma-new-terms}
  \end{equation}

  Estimate the difference \(\lnorm{P_k^n(\Q{}_n(v_n)) - P_k(v)}\) as
  \begin{gather}
  \lnorm{P_k^n(\Q{}_n(v_n)) - P_k(v)}
  \leq A_k \left(P_1 + P_2+P_3\right),\label{eq:Qn-convergence-est-1}
  \\
  P_1 = \lnorm{\int_0^T \int_{0}^{s(t)-\epsilon_m} (\tilde{u}^\tau(x,t)-u_*)^2 - (u(x,t)-u_*)^2 \,dx \,dt}\nonumber
  \\
  P_2 = \lnorm{\int_{0}^{T} \int_{s(t)-\epsilon_m}^{s(t)} (u(x,t)-u_*)^2 \,dx \,dt}\nonumber
  \\
  P_3=\lnorm{\int_{0}^{T} \int_{s(t)-\epsilon_m}^{s^\tau(t)} (\tilde{u}^\tau(x,t)-u_*)^2 \,dx \,dt}.\nonumber
  \end{gather}

  First, fix \(m\), and estimate $P_1$ using CBS inequality and~\eqref{eq:trunc-contractive}:
  \begin{gather}
  P_1 \leq \int_{0}^{T}\int_{0}^{s(t)-\epsilon_m} \lnorm{(\tilde{u}^\tau(x,t)-u_*)^2 - (u(x,t)-u_*)^2} \,dx \,dt
  \nonumber
  \\
  \leq \norm{\tilde{u}^\tau(x,t)-u(x,t)}_{L_2(\Omega_m)}
  \norm{\tilde{u}^\tau(x,t) + u(x,t) - 2u_*}_{L_2(\Omega_m)}
  \nonumber
  \\
  \leq C \norm{\tilde{u}^\tau(x,t)-u(x,t)}_{L_2(\Omega_m)}.\nonumber
  \end{gather}
  The term on the right is bounded due to first energy estimate.
  Thus we need to show convergence of the term on the left:
  \begin{gather}
    \norm{\tilde{u}^\tau(x,t)-u(x,t)}_{L_2(\Omega_m)}
    \leq \norm{\hat{u}^\tau(x,t)-u(x,t)}_{L_2(\Omega_m)}
    +\norm{\tilde{u}^\tau(x,t)-\hat{u}^\tau(x,t)}_{L_2(\Omega_m)}\nonumber
  \end{gather}
  In Lemma~\ref{thm:SCT}, it was shown that \(\hat{u}^\tau \to u\) weakly in
  \(B_2^{1,1}(\Omega_m)\), and hence for fixed $m$, it follows that there is some $N_1=N_1(m)$ such that for $n>N_1$
  \[
    \norm{\hat{u}^\tau(x,t)-u(x,t)}_{L_2(\Omega_m)} \leq \frac{1}{m}.
  \]

  Denote by
  \[
    \hat{\imath} = \max \bigg{\{}i\leq N : -\epsilon_m \leq x_i - \max_{t_{k-1} \leq t \leq t_k} s(t) \leq -\frac{\epsilon_m}{2} \bigg{\}}
  \]
   and $s_k^m=x_{\hat{\imath}}$.
   Then
  \begin{gather}
    \norm{\tilde{u}^\tau(x,t)-\hat{u}^\tau(x,t)}_{L_2(\Omega_m)}^2
    \leq \int_{0}^{T} \int_{0}^{s_k^m} (\tilde{u}^\tau(x,t)-\hat{u}^\tau(x,t))^2\, dx\, dt
    \nonumber
    \\
    \leq 4 \tau^2 \sum_{k=1}^{n} \tau \sum_{i=0}^{\hat{\imath}-1} h_i u_{i,\bar{t}}^2(k) + \frac{4}{3} C\tau \sum_{k=1}^{n} \tau \sum_{i=0}^{\hat{\imath}-1} h_i u_{ix}^2(k-1)
    \nonumber
    \\
    + \frac{4}{3} \tau^2 \sum_{k=1}^{n} \tau \sum_{i=0}^{\hat{\imath}-1} h_i u_{i,\bar{t}}^2(k) + \frac{4}{9} C \tau^2 \sum_{k=1}^{n} \tau^2 \sum_{i=0}^{\hat{\imath}-1} h_i u_{ix,\bar{t}}^2(k).\label{eq:utildetau-minus-uhattau-L2Estimate}
  \end{gather}
   As in the estimation~\cite[eq.\ 102]{abdulla16}, it follows
   from~\eqref{eq:SEE} and~\eqref{eq:utildetau-minus-uhattau-L2Estimate} that for sufficiently large \(n\),
   \begin{gather}
    \norm{\tilde{u}^\tau(x,t)-\hat{u}^\tau(x,t)}_{L_2(\Omega_m)}^2
    \leq C_1 \tau \bigg( \sum_{k=1}^{n} \tau \sum_{i=0}^{m_j-1} h_i \tilde{u}_{ix}^2(k-1)
  \nonumber
  \\
  + \sum_{k=1}^{n} \tau \sum_{i=0}^{m_j-1} h_i \tilde{u}_{i,\bar{t}}^2(k)
  + \sum_{k=1}^{n} \tau^2 \sum_{i=0}^{m_j-1} h_i \tilde{u}_{ix,\bar{t}}^2(k)
  \bigg)
  \leq \frac{C_2}{m}\nonumber
\end{gather}
   where $C_2$ is independent of $n$, and
  \begin{gather*}
    C_1= \frac{16}{3}T+\frac{4}{9}CT+\frac{4}{3}C.
  \end{gather*}
  We estimate $P_3$ as
  \begin{gather}
    P_3 \leq  P_{3,1} + P_{3,2},\quad
    P_{3,1}:=\lnorm{\int_{0}^{T} \int_{s(t)-\epsilon_m}^{s^\tau(t)} (u^\tau(x,t)-u_*)^2 \,dx \,dt}
    \nonumber
    \\
    P_{3,2} := \lnorm{\int_{0}^{T} \int_{s(t)-\epsilon_m}^{s^\tau(t)}
      (\tilde{u}^\tau(x,t)-u_*)^2 - (u^\tau(x,t)-u_*)^2 \,dx \,dt}.\nonumber
  \end{gather}
  Estimate \(P_{3,1}\) as
  \begin{gather}
    P_{3,1}
    \leq \norm{u^\tau(x,t)-u_*}_{L_4(D)}^2
    T^{\frac{1}{2}}\norm{s^\tau-s+\epsilon_m}_{C[0,T]}^{\frac{1}{2}}\nonumber
    \\
    \leq \left( \norm{u^\tau}_{V_2^{1,0}(D)} + \lnorm{u_*}^2(\l T)^{\frac{1}{2}}\right) T^{\frac{1}{2}}\norm{s^\tau-s+\epsilon_m}_{C[0,T]}^{\frac{1}{2}}
    \leq C_3 \norm{s^\tau-s+\epsilon_m}_{C[0,T]}^{\frac{1}{2}}\nonumber
  \end{gather}
  due to embedding of $V_2^{1,0}(D)$ into \(L_4(D)\),
  estimate~\cite[eq.\ 106]{abdulla16}.
  By uniform convergence of \(s^\tau{}\) to \(s\), it follows that
  \[
    \lim_{n \to \infty} P_{3,1} \leq C_3 \sqrt{\epsilon_m}.
  \]
  By~\eqref{thm:FEE}, CBS and Minkowski inequalities,
  \begin{gather}
  P_{3,2}
  \leq \lnorm{\int_{0}^{T} \int_{s(t)-\epsilon_m}^{s^\tau(t)} \lnorm{\tilde{u}^\tau(x,t)-u^{\tau}(x,t)}\lnorm{\tilde{u}^{\tau}(x,t) + u^{\tau}(x,t) - 2u_{*}} \,dx \,dt}\nonumber
  \\
  \leq
  C_4 \norm{\tilde{u}^\tau-u^{\tau}}_{L_2(D)}\nonumber
  \\
   \leq \bigg(\frac{C C^2_4}{3}\bigg)^{\frac{1}{2} } \tau^\frac{1}{2} \bigg( \sum_{k=1}^{n} \sum_{i=0}^{N-1} \tau h_i u_{ix}^2(k)\bigg)^{\frac{1}{2}} \to 0\nonumber
\end{gather}
as $n \to \infty$.
It follows that
\begin{gather}
  0 \leq \limsup_{n \to \infty}  \lnorm{A_k\sum_{k=1}^n\sum_{i=0}^{m_{j_k}-1} \tau h_i \subPlus{u_i(k)-u_*}^2 - A_k\int_{0}^{T} \int_{0}^{s(t)} \subPlus{u(x,t)-u_*}^2 \,dx \,dt}
  \nonumber
  \\
  \leq \frac{2C + C_2}{m} + C_3 \epsilon_m^{\frac{1}{2}} + P_2\nonumber
\end{gather}
for all \(m\).
Passing to the limit as $m \to \infty$
establishes~\eqref{eq:convergence-lemma-new-terms}, which, together
with~\eqref{eq:convergence-lemma-main-terms-abdulla15} proves the Lemma.
\end{proof}

\begin{lemma}\label{lem:Pn-convergence}
  For arbitrary $[v]_n=(\discreteControlVars{}) \in V_R^n$,
  \begin{equation*}
    \lim_{n \to \infty} \probIn([v]_n) - \J(\P_n([v]_n)) = 0.
  \end{equation*}
\end{lemma}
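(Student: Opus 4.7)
I would prove this by contradiction, combining weak compactness of the interpolated controls with Lemma~\ref{thm:SCT} and the arguments already developed in the proofs of Theorem~\ref{thm:existence-convergence}, part~1, and Lemma~\ref{lem:Qn-convergence}. Suppose the conclusion fails; then there exist $\delta_0>0$ and a subsequence (not relabeled) along which $\lnorm{\probIn([v]_n)-\J(\P_n([v]_n))}\geq \delta_0$. Since $\norm{[v]_n}_{\bar H}\leq R$, Lemma~\ref{mappings} implies that $v^n:=\P_n([v]_n)\in V_{R+\epsilon}$ for every $\epsilon>0$ and all $n$ large, so $\{v^n\}$ is bounded in the Hilbert space $\controlSpace{}$. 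Pass to a further subsequence with $v^n\to v^*$ weakly in $\controlSpaceWeaklyConverge{}$ and, by compact Sobolev embeddings, $(s^n,g^n)\to(s^*,g^*)$ strongly in $\controlSpaceStronglyConverge{}$. The uniform lower bound $s^n\geq \delta$ and weak lower semicontinuity of the norm show that $v^*$ is an admissible control on the domain $\Omega_*=\bk{(x,t):0<x<s^*(t),\,0<t\leq T}$.

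I would then identify two separate limits. Lemma~\ref{thm:SCT}, applied to the sequence $\{[v]_n\}$, yields that the interpolation $\hat u^{\tau}$ of the discrete state vector $[u([v]_n)]_n$ converges weakly in $B_2^{1,1}$ on any interior subdomain of $\Omega_*$ to $u_*:=u(\cdot,\cdot;v^*)$, the weak solution of the Neumann problem~\eqref{eq:intro-pde}--\eqref{eq:intro-stefan-cond} corresponding to the control $v^*$. Separately, the passage-to-the-limit argument from the proof of Theorem~\ref{thm:existence-convergence}, part~1, applied to the continuous states $u^n:=u(\cdot,\cdot;v^n)$, produces along a further subsequence the same limit: $u^n\to u_*$ weakly in $B_2^{1,1}$ on interior subdomains and strongly in $L_2(D)$ after extension.

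Finally I would show $\J(v^n)\to \J(v^*)$ and $\probIn([v]_n)\to \J(v^*)$, which contradicts the lower bound $\delta_0$ and completes the argument. The convergence $\J(v^n)\to \J(v^*)$ is essentially the computation at the end of Theorem~\ref{thm:existence-convergence}, part~1: the terms of $J$ are controlled by Newton--Leibniz, Morrey's and CBS inequalities, and the Sobolev trace theorem, while the penalty $P_k(v^n)\to P_k(v^*)$ is handled as the $\Delta P_n$ term there via Lemma~\ref{lem:trunc-props} and absolute continuity of the Lebesgue integral. The convergence $\probIn([v]_n)\to \J(v^*)$ is obtained by mirroring the proof of Lemma~\ref{lem:Qn-convergence}: weak trace convergence of $\hat u^\tau$ on $\Omega_m=\bk{x<s^*(t)-\epsilon_m}$ followed by the letting $m\to\infty$ yields $I_n([v]_n)\to J(v^*)$, and the penalty is split as in~\eqref{eq:Qn-convergence-est-1} into $P_1+P_2+P_3$ with $P_3=P_{3,1}+P_{3,2}$; each of these pieces vanishes by CBS, the contractivity~\eqref{eq:trunc-contractive}, the inner-strip estimate~\eqref{eq:utildetau-minus-uhattau-L2Estimate}, and the uniform convergence $s^\tau\to s^*$. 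The main obstacle is matching the two limits of the penalty integrals, because the discrete Riemann sums live on $\bk{0<x<s_k}$ while the continuous integrals live on $\bk{0<x<s^n(t)}$; this is overcome exactly by the interior exhaustion $\Omega_m\uparrow \Omega_*$ together with the uniform convergence of $s^n$ and $s^\tau$ to $s^*$, as in Lemma~\ref{lem:Qn-convergence}.
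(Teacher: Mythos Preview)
Your proposal is correct and follows essentially the same route as the paper: extract a weakly convergent subsequence of $v^n=\P_n([v]_n)$ with limit $\tilde v$, then show separately that $\J(v^n)\to\J(\tilde v)$ (weak continuity of $\J$, as in Theorem~\ref{thm:existence-convergence}, part~1) and $\probIn([v]_n)\to\J(\tilde v)$ (by repeating the argument of Lemma~\ref{lem:Qn-convergence}). Your contradiction framing is in fact slightly more careful than the paper's own proof, which simply ``assumes that the whole sequence converges'' without first passing to a subsequence; the contradiction/sub-subsequence device you use is the standard way to justify that step.
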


\begin{proof}
  Let $[v]_n \in \discreteControlSet{}$ and $v^n = (\controlVarsWithN{}) = \P_n([v]_n)$.
Then $\bk{\P_n([v]_n)}$ is weakly precompact in $\controlSpace{}$; assume that the whole sequence converges to $\tilde{v}=(\controlVarsWithTilde{})$.
  Then $\tilde{v} \in V_R$, and moreover, Rellich-Kondrachov compactness theorem implies that $(\controlVarsStronglyConvergeWithN{}) \to (\controlVarsStronglyConvergeWithTilde{})$ strongly in $\controlSpaceStronglyConverge{}$; in particular, $s^n \to \tilde{s}$ uniformly on $[0,T]$.
  Write the difference $\J\big(\P_n([v]_n)\big) - \fIn\big([v]_n \big)$ as
  \[
    \fIn\big([v]_n \big) - \J\big(\P_n([v]_n)\big)
    = \fIn\big([v]_n \big) - \J\big(v^n\big)
    = \fIn\big([v]_n \big) - \J(\tilde{v}) + \J(\tilde{v}) - \J\big(v^n\big).
  \]
  By weak continuity of $\J$, we have
  \(
    \lim_{n\to\infty} \left(\J(\tilde{v}) - \J\big(v^n\big)\right) = 0
  \).
  It remains to be shown that
  \[
    \lim_{n\to\infty} \left(\fIn\big([v]_n \big) - \J(\tilde{v})\right) = 0.
  \]
  Since $\tilde{v} \in V_{R+\epsilon}$ for some $\epsilon > 0$, and by strong convergence of $\P_n({[v]}_n) \to \tilde{v}$, a nearly identical argument to the proof of Lemma~\ref{lem:Qn-convergence} establishes this result.
\end{proof}
By Lemmas~\ref{lem:J-epsilon}--\ref{lem:Pn-convergence} and~\ref{lem:Vasil'ev},
Theorem~\ref{thm:existence-convergence} is proved.\qed%
\endgroup
\subsection{Proof of Theorem~\ref{thm:gradient}}\label{sec:frechet-differentiability}

\begingroup
\def\sbar{\overline{s}}
\def\ubar{\overline{u}}
\def\fbar{\overline{f}}
\def\ybar{\bar{y}}
\def\stilde{\widetilde{s}}
\def\utilde{\widetilde{u}}
\def\ftilde{\widetilde{f}}
\def\shat{\widehat{s}}
\def\ustar{u_{*}}
\def\Omegahat{\widehat{\Omega}}
\newcommand\tildebar[1]{\widetilde{\overline{#1}}}
\def\utildebar{\tildebar{u}}
\def\solnspace{B_{2, x, t}^{5/2 + 2\alpha, 5/4 + \alpha}}
\def\densityspace{B_{2, x, t}^{1/2 + 2\alpha, 1/4 + \alpha}}
\def\ivspace{B_2^{3/2 + 2\alpha}}
\def\uxtrspace{B_2^{1 + \alpha}}
\def\uxxtrspace{B_2^{1/2 + \alpha}}
\def\uxxxtrspace{B_2^{\alpha}}
\def\controlVars{s, g, f}
\def\controlVarsWithDelta{{\Delta{} s}, {\Delta{} g}, {\Delta{} f}}
\def\controlVarsWithBar{\bar{s}, \bar{g}, \bar{f}}

\def\scontrolspace{B_2^2}
\def\gcontrolspace{B_2^{1/2 + \alpha}}
\def\fcontrolspace{B_{2, x, t}^{1, 1/4 + \alpha}}

\def\controlSpace{\tilde{H}}
\def\controlSpaceFull{\scontrolspace(0,T)\times{} \gcontrolspace(0,T)\times{}
  \fcontrolspace(D)}

\def\muspace{B_2^{1/4}}
\def\adjointsolnspace{B_{2, x, t}^{2, 1}}
\def\chigammaspace{B_{2, x, t}^{3/2 + 2\alpha^*,3/4 + \alpha^*}}
Note that $W_R$ is a closed, bounded, and convex subset of $\controlSpace{}$, so the left hand side of
the optimality condition~\eqref{eq:optimality-condition} is uniquely defined for Frechet
gradient $\J'(v)$ defined in the sense of definition~\ref{defn:Frechet}.
Define ${\Delta v} = (\controlVarsWithDelta{})$, $\bar{v} = v+{\Delta v} =
(\controlVarsWithBar{})$ such that $\bar{v} \in W_R$.
Let $\ubar(x,t) = \ubar(x,t,\bar{v})$ and
\begin{gather*}
  \shat = \min(s, \sbar),\quad 0 \leq t \leq T,\quad
  \Omegahat = \bk{(x,t):0 < x < \shat(t),~ 0 < t \leq T},
  \\
  \Delta u(x,t) = \ubar(x,t) - u(x,t)~\text{in}~\Omegahat.
\end{gather*}
Partition the time domain as $[0,T] = T_{1}\cup T_{2}$ where
\[
  T_1 =\bk{t \in [0,T]: \Delta s(t) <0},\quad
  T_2 = [0,T] \setminus T_1 = \bk{t \in [0,T]: \Delta s(t) \geq 0}.
\]
Let $\psi$ be a solution of the adjoint problem~\eqref{eq:adjoint-problem}.
Transforming $\Delta J$ as in~\cite{abdulla17,abdulla16}, it follows that
\begin{align}
  {\Delta J}(v)
  & = \int_0^T \big[2\beta_1(u-\mu)u_x \big]_{x=s(t)}{\Delta s}(t) \, dt
    +
    \int_{\Omegahat} \left[-2A_k \left( u(x, t) - \ustar \right)\right]{\Delta u} \,dx \,dt \nonumber
    \\
      & \quad + [\beta_0(u(s(T),T)-w(s(T)))^2 + 2 \beta_2(s(T)-s_*)]{\Delta s}(T) \nonumber
    \\
      & \quad -\int_{\Omega} \psi {\Delta f} \,dx \,dt
    + \int_0^T\Big[ \psi \big(\chi_x {\Delta s} - \gamma_x s' {\Delta s} - \gamma {\Delta s}' - (a u_x)_x {\Delta s}\big)\Big]_{x=s(t)} \,dt \nonumber
    \\
      & \quad - \int_{0}^T \psi(0,t) {\Delta g}(t) \, dt
      + o({\Delta v}).\label{eq:gradient-J-increment}
\end{align}
Write the increment \({\Delta P}_k := P_k(\bar{v}) - P_k(v)\) as
\begin{gather}
  {\Delta P}_k 
      = A_k  \left[ P_1 + P_2 \right],\nonumber
    \end{gather}
    where
    \begin{gather}
      P_1 :=
      \int_{\Omegahat} \lnorm{\subPlus{u(x,t) - \ustar + {\Delta u}(x,t)}}^2  - \lnorm{\subPlus{u(x,t) - \ustar}}^2 \,dx \,dt\nonumber
    \\
    P_2 := \int_{T_2} \int_{\shat(t)}^{\sbar(t)}
    \lnorm{\subPlus{\ubar(x,t) - \ustar}}^2 \,dx \,dt
    - \int_{T_1} \int_{\shat(t)}^{s(t)}
    \lnorm{\subPlus{u(x,t) - \ustar}}^2 \,dx \,dt.\nonumber
  \end{gather}
  By mean value theorem,
  \begin{gather}
    P_1 
    = 2 \int_{\Omegahat} \subPlus{u(x,t) - \ustar}{\Delta u}(x,t) \,dx \,dt + R_{1},\label{eq:Pk-term1-final}
    \\
    R_{1} := 2 \int_{\Omegahat}
    \left[
    \subPlus{u(x,t) - \ustar + \theta {\Delta u}(x,t)}
    - \subPlus{u(x,t) - \ustar}\right]{\Delta u}(x,t)
    \,dx \,dt\nonumber
  \end{gather}
  where \(0<\theta<1\) is in general a function of \((x,t)\).
  Similarly,
  \begin{gather}
    P_2 = \int_{T_2}
    \lnorm{\subPlus{\ubar(s(t) + \theta {\Delta s}(t),t) - \ustar}}^2
    {\Delta s}(t)
    \,dt\nonumber
    \\
    + \int_{T_1}
    \lnorm{\subPlus{u(s(t) - \theta {\Delta s}(t),t) - \ustar}}^2 {\Delta s}(t)
    \,dt\nonumber
    \\
    = \int_{0}^T
      \lnorm{\subPlus{u(s(t),t) - \ustar}}^2
      {\Delta s}(t)
      \,dt
    + R_{2} + R_{3} + R_{4},\label{eq:Pk-term2-final}
    \\
    R_{2} := \int_{T_2}
    \left[
      \lnorm{\subPlus{u(s(t),t) + {\Delta u}(s(t), t) - \ustar}}^2
      - \lnorm{\subPlus{u(s(t),t) - \ustar}}^2
      \right]
    {\Delta s}(t)
    \,dt,\nonumber
    \\
    R_{3} := \int_{T_2}
     \left[
       \lnorm{\subPlus{\ubar(s(t) + \theta {\Delta s}(t),t) - \ustar}}^2 -
    \lnorm{\subPlus{\ubar(s(t),t) - \ustar}}^2 \right]
    {\Delta s}(t)
    \,dt,\nonumber
    \\
    R_{4} := \int_{T_1}
    \left[
      \lnorm{\subPlus{u(s(t) - \theta {\Delta s}(t),t) - \ustar}}^2 - \lnorm{\subPlus{u(s(t),t) - \ustar}}^2
      \right]{\Delta s}(t)
    \,dt.\nonumber
  \end{gather}
  Combining~\eqref{eq:Pk-term1-final} and~\eqref{eq:Pk-term2-final} it follows
  that
  \begin{gather}
    {\Delta P}_k = 2 A_k \int_{\Omegahat} \subPlus{u(x,t) - \ustar}{\Delta u}(x,t)
    \,dx \,dt
    + A_k \int_{0}^T
    \lnorm{\subPlus{u(s(t),t) - \ustar}}^2
    {\Delta s}(t)
    \,dt\nonumber
    \\
    + R_{1} + R_{2} + R_{3} + R_{4}.\label{eq:Pk-final}
  \end{gather}
  By~\eqref{eq:gradient-J-increment},~\eqref{eq:Pk-final}, it follows that
  \begin{gather}
    {\Delta J}(v)
    = \int_0^T \big[2\beta_1(u-\mu)u_x \big]_{x=s(t)}{\Delta s}(t) \, dt
    -\int_{\Omega} \psi {\Delta f} \,dx \,dt\nonumber
    \\
    + [\beta_0(u(s(T),T)-w(s(T)))^2 + 2 \beta_2(s(T)-s_*)]{\Delta s}(T)
    - \int_{0}^T \psi(0,t) {\Delta g}(t) \, dt\nonumber
    \\
    + \int_0^T\Big[
      \psi \big(\chi_x {\Delta s} - \gamma_x s' {\Delta s} - \gamma {\Delta s}' - (a u_x)_x {\Delta s}\big)
      \Big]_{x=s(t)} \,dt \nonumber
    \\
    + A_k \int_{0}^T
    \lnorm{\subPlus{u(s(t),t) - \ustar}}^2
    {\Delta s}(t)
    \,dt
    + o({\Delta v}) + \sum_{i=1}^{4}R_i.\label{eq:gradient-full-increment}
  \end{gather}
  Estimate \(R_{1}\) using CBS inequality
  \begin{gather}
    \lnorm{R_{1}}
    \leq 2 \norm{\subPlus{u(x,t) - \ustar + \theta {\Delta u}(x,t)}
      - \subPlus{u(x,t) - \ustar}}_{L_2(\Omegahat)}
    \norm{\Delta u}_{L_2(\Omegahat)}.\nonumber
    \intertext{By~\eqref{eq:trunc-contractive}, it follows that}
    \lnorm{R_{1}} \leq 2 \norm{\Delta u}_{L_2(\Omegahat)}^2.\nonumber
  \end{gather}
  Write
  \begin{gather}
    \norm{\Delta u}_{L_2(\Omegahat)}^2
    = \int_{T_1} \int_0^{\sbar(t)}\lnorm{\ubar(x,t) - u(x,t)}^2 \,dx \,dt
    + \int_{T_2} \int_0^{s(t)}\lnorm{\ubar(x,t) - u(x,t)}^2 \,dx \,dt\nonumber
    \\
    \leq
      2 \l \bigg[
    \int_{0}^T \int_0^{1}\lnorm{{\Delta \utilde}(y,t)}^2 \,dy \,dt
    + \int_{T_1} \int_0^{1}\lnorm{u(y s(t), t)- u(y \sbar(t),t)}^2 \,dy \,dt\nonumber
    \\
    + \int_{T_2} \int_0^{1}\lnorm{\ubar(y s(t),t) - \ubar(y \sbar(t), t)}^2 \,dy \,dt
    \bigg].\nonumber
    \intertext{By Newton-Leibniz formula,}
    \norm{\Delta u}_{L_2(\Omegahat)}^2
    \leq 2 \l \bigg[
    \norm{\Delta \utilde}_{L_2(Q_T)}^2
    + \int_{T_1} \int_0^{1}\lnorm{\int_{y \sbar(t)}^{y s(t)} u_x(z, t) \,dz}^2 \,dy \,dt\nonumber
    \\
    + \int_{T_2} \int_0^{1}\lnorm{\int_{y\sbar(t)}^{ys(t)}\ubar_x(z,t) \,dz}^2 \,dy \,dt
    \bigg].\nonumber
    \end{gather}
    Take the change of variable \(\theta = z/ s(t)\) to derive
    \[
    \int_0^{1}\lnorm{\int_{y \sbar(t)}^{y s(t)} u_x(z, t) \,dz}^2 \,dy
    =
    \int_0^{1}\lnorm{\int_{y}^{y \sbar(t)/s(t)} \tilde{u}_x(z, t) \,dz}^2
    \,dy.\nonumber
    \]
    By boundedness of \(\tilde{u}_x\) for \(\tilde{u} \in
    \solnspace(Q_T)\) and~\eqref{eq:energy-est-utilde}, it follows that
    \begin{gather}
    \int_0^{1}\lnorm{\int_{y \sbar(t)}^{y s(t)} u_x(z, t) \,dz}^2 \,dy
    \leq C\norm{\tilde{u}}_{\solnspace(Q_T)}\int_0^{1}y^2
    \lnorm{\frac{\sbar(t)}{s(t)} - 1}^2 \,dy\nonumber
    \\
    =
    C \norm{\tilde{u}}_{\solnspace(Q_T)}\frac{1}{3}
    \lnorm{\frac{\sbar(t)}{s(t)} - 1}^2
    \leq \frac{C}{\delta} \lnorm{{\Delta s}(t)}^2,\nonumber
    \intertext{and hence}
    \norm{\Delta u}_{L_2(\Omegahat)}^2
    \leq 2 \l \bigg[
    \norm{\Delta \utilde}_{L_2(Q_T)}^2
    + C \norm{\Delta s}_{B_2^1(0,T)}^2
    \bigg].\nonumber
  \end{gather}
  Hence \(R_{1} = o({\Delta v})\).
  In \(R_{2}\), use CBS and Morrey inequalities to write
  \begin{gather}
    \lnorm{R_{2}}
      \leq \norm{\Delta s}_{B_2^1(0,T)}\bar{R}_{2} \norm{\subPlus{u(s(t),t) + {\Delta u}(s(t), t) - \ustar}
      + \subPlus{u(s(t),t) - \ustar}}_{L_2(T_2)},\label{eq:diff-est-r2term-v1}
      \intertext{where}
      \bar{R}_{2} := \norm{
      \subPlus{u(s(t),t) + {\Delta u}(s(t), t) - \ustar}
      - \subPlus{u(s(t),t) - \ustar}
    }_{L_2(T_2)}.\nonumber
    \end{gather}
    By~\eqref{eq:trunc-contractive}, it follows that
    \[
    \bar{R}_{2} \leq \norm{
      {\Delta u}(s(t), t)
    }_{L_2(T_2)}.
    \]
    By definition,
    \begin{gather}
    \norm{
      {\Delta u}(s(t), t)
    }_{L_2(T_2)}
    =\left( \int_{T_2} \lnorm{\utildebar\left(\frac{s(t)}{\sbar(t)},t\right) -
        \utilde(1, t)}^2 \,dt \right)^{1/2}\nonumber
    \\
    \leq \left( \int_{T_2}
      \lnorm{\utildebar\left(\frac{s(t)}{\sbar(t)},t\right) - \utildebar(1, t)}^2
      \,dt\right)^{1/2}
    +\left( \int_{T_2}
      \lnorm{{\Delta \utilde}(1, t)}^2
      \,dt \right)^{1/2}.\nonumber
    \intertext{By CBS inequality, trace embedding, and
      estimate~\eqref{eq:v210-deltau-energy-est}, it follows that}
    \bar{R}_{2}
    \leq \left( \int_{T_2} \lnorm{1-\frac{s(t)}{\sbar(t)}}
      \int_{s(t)/\sbar(t)}^1 \lnorm{\utildebar_x(z, t)}^2 \,dz
    \right)^{1/2}\nonumber
    \\
    + C \left( \norm{\Delta s}_{\scontrolspace(0,T)}^{1/2+\alpha^*}
      + \norm{\Delta g}_{L_2(0,T)}
      + \norm{\Delta f}_{L_2(D)} \right).\nonumber
    \end{gather}
    Hence, using~\eqref{eq:energy-est-utilde} and Morrey's inequality, it
    follows that
    \begin{equation}
      \bar{R}_{2}
      \leq C \norm{\Delta s}_{B_2^1(0,T)}^{1/2}
      + C \left( \norm{\Delta s}_{\scontrolspace(0,T)}^{1/2+\alpha^*}
        + \norm{\Delta g}_{L_2(0,T)}
        + \norm{\Delta f}_{L_2(D)} \right)\label{eq:diff-est-r2barterm-v1}
  \end{equation}
  Using~\eqref{eq:diff-est-r2barterm-v1} in~\eqref{eq:diff-est-r2term-v1} it follows that \(R_{2} = o(\Delta v)\) as \({\Delta v} \to 0\).
  Estimate \(R_{3}\) using Morrey's inequality and CBS inequality to derive
  \begin{gather}
    \lnorm{R_{3}}
    \leq C \norm{\Delta s}_{B_2^1(0,T)}
    \int_{T_2}
     \left[
       \lnorm{\subPlus{\ubar(s(t) + \theta {\Delta s}(t),t) - \ustar}}^2 -
    \lnorm{\subPlus{\ubar(s(t),t) - \ustar}}^2 \right]
  \,dt\nonumber
  \\
  \leq C \norm{\Delta s}_{B_2^1(0,T)} \bar{R}_{3}
  \norm{\subPlus{\ubar(s(t) + \theta {\Delta s}(t),t) - \ustar} +
    \subPlus{\ubar(s(t),t) - \ustar}}_{L_2(T_2)},\label{eq:diff-est-r3term-v1}
  \intertext{where}
  \bar{R}_{3} := \norm{\subPlus{\ubar(s(t) + \theta {\Delta s}(t),t) - \ustar} -
    \subPlus{\ubar(s(t),t) - \ustar}}_{L_2(T_2)}.\nonumber
  \intertext{By~\eqref{eq:trunc-contractive}, it follows that}
  \bar{R}_{3} \leq \norm{\ubar(s(t) + \theta {\Delta s}(t),t) -
    \ubar(s(t),t)}_{L_2(T_2)}.\nonumber
  \end{gather}
  By Newton-Leibniz formula, CBS, and Morrey's inequalities, it follows that
  \begin{gather}
  \bar{R}_{3}
  \leq C \norm{\Delta s}_{B_2^1(0,T)}\norm{\ubar}_{B_2^{1,0}(D)}.\label{eq:diff-est-r3barterm-v1}
\end{gather}
Using energy estimate~\ref{thm:FCT} and
estimate~\eqref{eq:diff-est-r3barterm-v1} in~\eqref{eq:diff-est-r3term-v1}, it
follows that \(R_3 = o({\Delta v})\).
  A similar proof establishes \(R_{4} = o(\Delta v)\).
  Therefore \(\sum_{i=1}^4 R_i = o({\Delta v})\) and Theorem~\ref{thm:gradient} is proved.
\endgroup

\providecommand{\bysame}{\leavevmode\hbox to3em{\hrulefill}\thinspace}
\providecommand{\MR}{\relax\ifhmode\unskip\space\fi MR }
\providecommand{\MRhref}[2]{%
  \href{http://www.ams.org/mathscinet-getitem?mr=#1}{#2}
}
\providecommand{\href}[2]{#2}

\end{document}